\documentclass[a4paper,12pt]{amsart}   
\textwidth=155truemm \textheight=245truemm \hoffset-12mm
\headheight=7pt \voffset-15mm
\parskip=4truept

\usepackage{amsmath,amssymb,amsxtra}
\usepackage{amsthm}
\usepackage[all]{xy}




%
%
%
%
\newcommand{\R}{{\mathrm R}}

\newcommand{\Z}{\mathbb{Z}}

%

\newcommand{\pcom}{{}_{p}^{\wedge}}

\DeclareMathAlphabet\EuR{U}{eur}{m}{n}
\SetMathAlphabet\EuR{bold}{U}{eur}{b}{n}

\newcommand{\Res}{\operatorname{Res}\nolimits}

\newcommand{\Inj}{\operatorname{Inj}\nolimits}

\newcommand{\defeq}{\overset{\text{\textup{def}}}{=}}

\renewcommand{\:}{\colon}


\newcommand{\calb}{\mathcal{B}}
\newcommand{\calc}{\mathcal{C}}
\newcommand{\cald}{\mathcal{D}}
\newcommand{\cale}{\mathcal{E}}
\newcommand{\calf}{\mathcal{F}}

\newcommand{\call}{\mathcal{L}}



\newcommand{\orb}{\mathcal{O}}




\newcommand{\SFL}[1][]{(S#1,\calf#1,\call#1)}
\newcommand{\callq}{\call^q}


\renewcommand{\mod}{\mbox{-}\curs{mod}}
\newcommand{\alg}{\mbox{-}\curs{alg}}

\newcommand{\cat}{\curs{Cat}}

\newcommand{\widebar}[1]{\overset{\mskip3mu\hrulefill\mskip3mu}{#1}
                \vphantom{#1}}

\newcommand{\Id}{\operatorname{Id}\nolimits}

\newcommand{\Inn}{\operatorname{Inn}\nolimits}


\let\oldcirc=\circ
\renewcommand{\circ}{\mathchoice
    {\mathbin{\scriptstyle\oldcirc}}{\mathbin{\scriptstyle\oldcirc}}
    {\mathbin{\scriptscriptstyle\oldcirc}}
    {\mathbin{\scriptscriptstyle\oldcirc}}}

\newcommand{\hclim}[1]{\setbox1=\hbox{\rm hocolim}
    \setbox2=\hbox to \wd1{\rightarrowfill} \ht2=0pt \dp2=-1pt
    \mathop{\vtop{\baselineskip=5pt\box1\box2}}
    _{#1}}

\newcommand{\higherlim}[2]{\displaystyle\setbox1=\hbox{\rm lim}
    \setbox2=\hbox to \wd1{\leftarrowfill} \ht2=0pt \dp2=-1pt
    \setbox3=\hbox{$\scriptstyle{#1}$}
    \ifdim\wd1<\wd3
    \mathop{\hphantom{^{#2}}\vtop{\baselineskip=5pt\box1\box2}^{#2}}_{#1}
    \else
    \mathop{\vtop{\baselineskip=5pt\box1\box2}}\limits_{#1}\nolimits^{#2}
    \fi}


\renewcommand{\hom}{\operatorname{Hom}\nolimits}
\newcommand{\mor}{\operatorname{Mor}\nolimits}

\newcommand{\aut}{\operatorname{Aut}\nolimits}

\newcommand{\Ob}{\operatorname{Ob}\nolimits}



\newcommand{\xxto}[1]{\mathrel{\mathop{%
  \setbox0\hbox{$\ {\scriptstyle#1}\ $}%
  \hbox to \wd0{\rightarrowfill}}^{#1}}%
}

\newcommand{\xto}[2][]{%
  \mathrel{\mathop{%
    \setbox0\vbox{
      \hbox{$\scriptstyle\;\;{#1}\;\;$}%
      \hbox{$\scriptstyle\;\;{#2}\;\;$}%
    }%
    \hbox to\wd0{\rightarrowfill}\displaystyle}%
  \limits^{#2}\ifx{#1}{}\else{_{#1}}\fi}%
}

\newcommand{\longleft}[1]{\;{\leftarrow%
\count255=0 \loop \mathrel{\mkern-6mu}%
    \relbar\advance\count255 by1\ifnum\count255<#1\repeat}\;}
\newcommand{\longright}[1]{\;{\count255=0 \loop \relbar\mathrel{\mkern-6mu}%
    \advance\count255 by1\ifnum\count255<#1\repeat\rightarrow}\;}
\newcommand{\Right}[2]{\overset{#2}{\longright#1}}
\newcommand{\RIGHT}[3]{\mathrel{\mathop{\kern0pt\longright#1}
        \limits^{#2}_{#3}}}

\newcommand{\LEFT}[3]{\mathrel{\mathop{\kern0pt\longleft#1}\limits^{#2}_{#3}}
}
\newcommand{\dRIGHT}[3]{\mathrel{%
   \mathop{\vcenter{\baselineskip=0pt\hbox{$\kern0pt\longright#1$}%
   \hbox{$\kern0pt\longright#1$}}}\limits^{#2}_{#3}}}
\newcommand{\LRIGHT}[3]{\mathrel{%
   \mathop{\vcenter{\baselineskip=0pt\hbox{$\kern0pt\longleft#1$}%
   \hbox{$\kern0pt\longright#1$}}}\limits^{#2}_{#3}}}
\newcommand{\RLEFT}[3]{\mathrel{%
   \mathop{\vcenter{\baselineskip=0pt\hbox{$\kern0pt\longright#1$}%
   \hbox{$\kern0pt\longleft#1$}}}\limits^{#2}_{#3}}}
\newcommand{\onto}[1]{\;{\count255=0 \loop \relbar\joinrel
    \advance\count255 by1
    \ifnum\count255<#1 \repeat \twoheadrightarrow}\;}



\newcommand{\calfq}{\calf^q}

\newcommand{\homf}{\hom_{\calf}}
\newcommand{\homs}{\hom_{S}}

\newcommand{\sylp}{\mathrm{Syl}_p}
\newcommand{\FF}{\mathbb{F}}
\newcommand{\calg}{\mathcal{G}}


\newcommand{\lf}[2]{\def\test{#2}\def\tst{}\ifx\test\tst\calf_{1}%
        \else\calf_{#2}\fi}      

\renewcommand{\ll}[2]{\def\test{#2}\def\tst{}\ifx\test\tst\call_{1}%
        \else\call_{#2}\fi}      
\newcommand{\llx}[2]{\ll{#1}{#2}^{\bullet}}

\newcommand{\Obj}{\mathrm{Obj}}

\newcommand{\da}{\downarrow}


\newcommand{\colimnoarrow}{\operatornamewithlimits{colim}}
\newcommand{\highercolimnoarrow}[2]{\operatornamewithlimits{colim^{#1}}_{#2}}
\newcommand{\higherlimnoarrow}[2]{\operatornamewithlimits{lim^{#1}}_{#2}}

\newcommand{\PreTr}{\operatorname{Pre-Tr}}


\DeclareMathAlphabet\EuR{U}{eur}{m}{n}
\SetMathAlphabet\EuR{bold}{U}{eur}{b}{n}
\newcommand{\curs}{\EuR}
\newcommand{\Ab}{\curs{Ab}}
\newcommand{\Fp}{\ensuremath{\mathbb{F}_p}}


\newcommand{\Sh}{\ensuremath{Sh}}

\newcommand{\Tr}{\ensuremath{\mathrm{Tr}}}
\newcommand{\overcat}[2] {\mbox{\ensuremath{#1\downarrow #2}}}


\theoremstyle{theorem}
\newtheorem{theorem}{Theorem}[section]
\newtheorem{proposition}[theorem]{Proposition}
\newtheorem{lemma}[theorem]{Lemma}
\newtheorem{corollary}[theorem]{Corollary}

\theoremstyle{definition}
\newtheorem{definition}[theorem]{Definition}

\theoremstyle{remark}
\newtheorem{remark}[theorem]{Remark}
\newtheorem{Notation}[theorem]{Notation}

\newcommand{\ack}{\textbf{Acknowledgements: }}

\begin{document}

\title{$p$-local finite group cohomology}

\author{Ran Levi}
\address{Institute of Mathematics, 
         University of Aberdeen,
         Fraser Noble Building 138, 
         Aberdeen AB24 3UE, 
         U.K.}
\email{r.levi@abdn.ac.uk}

\author{K\'ari Ragnarsson}
\address{Department of Mathematical Sciences,
         Depaul University, 
         2320 N.~Kenmore Avenue, 
         Chicago, IL 60614, 
         USA}
\email{kragnars@math.depaul.edu}

\thanks{This project was supported by EPSRC grant GR/S94667/01.}

\subjclass[2010]{20J06, 20D20, 55R40.}

\keywords{fusion system, group cohomology, $p$-local finite group.}

\begin{abstract}
We study cohomology for $p$-local finite groups with non-constant
coefficient systems. In particular we show that under certain
restrictions there exists a cohomology transfer map in this context,
and deduce the standard consequences.
\end{abstract}


\maketitle

\section{Introduction}

A $p$-local finite group is an algebraic object designed to
encapsulate the information modeled on the $p$-completed classifying
space of a finite group. More specifically, it is a triple $\SFL$,
where $S$ is a finite $p$-group, and $\calf$ is a certain category
whose objects are the subgroups of $S$ and whose morphisms are
certain homomorphisms between them, satisfying a list of axioms,
which entitles it to be called a "saturated fusion system over $S$".
The category $\calf$ models conjugacy relations between subgroups of
$S$, while $\call$ is again a category, whose objects are a certain
subcollection of subgroups of $S$, but whose morphism set contain
enough structure as to allow one to associate a classifying space
with $\SFL$ from which the full structure briefly described above
can be retrieved. The classifying space of $\SFL$ is simply the
$p$-completed nerve $|\call|\pcom$.

The theory of $p$-local finite groups was introduced in \cite{BLO2}
and has been studied quite extensively by various authors. In
particular the mod $p$ cohomology of the classifying space of a
$p$-local finite group satisfies a "stable elements theorem",
identical in essence to the corresponding statement for the
cohomology of finite groups with constant mod $p$ coefficients
\cite[Thm B]{BLO2}. However, as we shall see below, the analogy
does not carry forward when one considers a more general setup,
i.e., the cohomology of $p$-local finite groups in the context of
functor cohomology. In particular we will present counter examples
to the statement that the cohomology of the $p$-local group, in this
sense, is given as the stable elements in the cohomology of its
Sylow subgroup with restricted coefficient.

The purpose of this paper is to study the cohomology of $p$-local
finite groups with arbitrary coefficients, and in particular to
establish an algebraic definition of a transfer map for $p$-local
finite groups in this setting. We deal with the cases where a
geometric transfer must exist, that is, when there is a finite index
covering space associated with the given setup. The necessary theory
is provided by the papers \cite{BCGLO1, BCGLO2}, where "subgroups"
of a $p$-local finite group of a given "index" are defined and
studied. In particular it is shown in \cite{BCGLO2} that with a
given $p$-local finite group one can associate two finite groups --
a $p$-group and a $p'$-group -- and that, up to equivalence, $p$-local
subgroups of $p$-power index and $p'$ index are in one to one
correspondence with subgroups of these two groups. Subgroups of
indices which are neither $p$-power nor $p'$ can also be studied
under restricted conditions, which essentially follow from the two
basic cases. In the current article we show the existence of a
"transfer map" to the cohomology of a $p$-local finite group from
the cohomology of any subgroup of index prime to $p$ with respect to
any system of coefficients, and from  the cohomology of any subgroup
of $p$-power index with respect to locally constant systems of
coefficients. These transfer maps carry similar properties to the
standard group cohomology transfer, and can be used to study
cohomology of $p$-local finite groups with non-constant
coefficients.

We now explain our setup and state our results. To a finite group $G$, 
one can associate a category $\calb(G)$ with a single object whose 
endomorphism monoid is $G$. A $G$-module $M$ can be regarded as a 
functor on $\calb(G)$, and the cohomology of $G$ with coefficients in $M$ can be computed as the higher limits of that functor. By analogy, a
coefficient system on a $p$-local finite group $\SFL$ is a functor
$M\colon \call\to \Ab$, where $\Ab$ is the category of abelian
groups (or sometimes $\Z_{(p)}$-modules, in which case we say that
the system is $p$-local), and the cohomology groups $H^*(\call,M)$ 
are defined as the higher limits of $M$ over $\call$. Unless otherwise specified, we will work with coefficient systems which are covariant functors. A coefficient system $M$ is \emph{locally constant} if it sends every morphism in $\call$ to an isomorphism of modules.

As already indicated, we only define a transfer for arbitrary
systems of coefficients under certain, rather restrictive,
conditions. We start our discussion with a proposition, due to Bob
Oliver, which shows that in general one cannot hope to do much
better.

\begin{proposition}\label{Bob's-example}
There exist a $p$-local finite group $\SFL$ and a locally constant
$p$-local system of coefficients $M$ on $\SFL$ such that
$\widetilde{H}^*(\call, M)\neq 0$  but $\widetilde{H}^*(S, \iota^*
M)=0$, where $\iota^*M$ means the restriction of $M$ to $S$.
\end{proposition}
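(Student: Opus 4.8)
The plan is to reformulate the statement via classifying spaces, reduce the hypothesis on $S$ to a constraint on the coefficients, and then exhibit an explicit (and, as it turns out, necessarily exotic) example.

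Recall that a locally constant $p$-local coefficient system $M$ on $\call$ is the same datum as a $\Z_{(p)}[\pi]$-module, where $\pi:=\pi_1(|\call|)$ is a finite group, and that the higher limits over $\call$ of a locally constant functor compute the cohomology of the nerve with the corresponding local system, so $\widetilde H^*(\call,M)\cong\widetilde H^*(|\call|;\underline M)$. The canonical map $BS\to|\call|$ induces $\iota_*\colon S=\pi_1(BS)\to\pi$, and $\iota^*M$ is $M$ viewed as a $\Z_{(p)}[S]$-module through $\iota_*$. Since $S$ is a $p$-group, the condition $\widetilde H^*(S,\iota^*M)=0$ is restrictive: if $\iota_*$ has nontrivial kernel then a nontrivial $p$-subgroup of $S$ acts trivially on $\iota^*M$, and a short computation of the (Tate or ordinary reduced) cohomology of that $p$-group with trivial coefficients $\iota^*M$ forces $\iota^*M$ to be a $\Q$-vector space; if $\iota_*$ is injective, then $\iota^*M$ must be cohomologically trivial over $S$, e.g.\ $\Z_{(p)}[S]$-free. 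Conversely, in the first case any \emph{rational} locally constant system (for instance the constant system $\Q$, or $\Q[\pi]$) and in the second case a suitable permutation module $\Z_{(p)}[\pi/K]$ on which $S$ acts freely satisfies $\widetilde H^*(S,\iota^*M)=0$ automatically. Hence the proposition reduces to producing a $p$-local finite group together with such an $M$ for which $\widetilde H^*(|\call|;\underline M)\neq 0$ — equivalently, for which $|\call|$, or an appropriate finite cover of it, fails to be rationally (resp.\ $\Z_{(p)}$-) acyclic.

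Such examples cannot come from a finite group: if $\calf=\calf_S(G)$ then $|\call|$ is mod-$p$ equivalent to $BG$ compatibly with the natural map on fundamental groups, so $\widetilde H^*(\call,M)\cong\widetilde H^*(G;M')$ for the appropriate inflated $\Z_{(p)}[G]$-module $M'$, and the ordinary Sylow transfer then embeds $\widetilde H^*(G;M')$ into $\widetilde H^*(S,\iota^*M)$, ruling out the discrepancy. I would therefore take $\SFL$ to be a small \emph{exotic} $p$-local finite group (such as one of the Ruiz--Viruel systems at $p=7$, or a Solomon system at $p=2$), determine $\pi=\pi_1(|\call|)$ and a suitable covering space of $|\call|$ using the covering-space correspondence of \cite{BCGLO1,BCGLO2}, and then compute — via a decomposition of $|\call|$, for instance the subgroup or normaliser decomposition over the relevant orbit category — enough of the rational (resp.\ $\Z_{(p)}$-) homology of $|\call|$ or of that cover to exhibit a nonzero class invisible to $S$.

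This last step is where I expect the real work to lie: in the exotic case there is no comparison space like $BG$, so one must genuinely pin down $\pi_1(|\call|)$ and carry out a cohomology computation establishing $\widetilde H^*(|\call|;\underline M)\neq 0$ for the chosen $M$. Granting that, the remaining verification — that $\widetilde H^*(S,\iota^*M)=0$ — is immediate from the reduction in the second paragraph, and the proposition follows.
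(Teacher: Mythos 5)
Your reduction in the second paragraph is essentially right and is close to what the paper does: take $\Gamma=\pi_1(|\call|)$, arrange that $j\colon S\to\Gamma$ is injective, and take $M=\Fp[\Gamma]$ (the case $K=1$ of your permutation-module suggestion). Then $\iota^*M$ is free over $\Fp[S]$, hence injective since $S$ is finite, so $\widetilde H^*(S,\iota^*M)=0$; and by Proposition~\ref{loc-const-coho} together with a Shapiro-type identification, $H^*(\call,M)\cong H^*(\widetilde{|\call|};\Fp)$, which is nonzero as soon as the universal cover $\widetilde{|\call|}$ fails to be mod~$p$ acyclic. That is the whole argument.

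The serious error is your third paragraph, where you claim that finite groups cannot furnish such examples and so one must go exotic. Your stated justification, that ``$|\call|$ is mod-$p$ equivalent to $BG$ compatibly with the natural map on fundamental groups,'' is false: only the $p$-completions $|\call|\pcom\simeq BG\pcom$ agree. Before completion, $\pi_1(|\call|)$ is in general a very different finite group from $G$, and in particular the universal cover of $|\call|$ is generally not contractible, whereas $EG$ of course is. It follows that $H^*(\call,M)$ for a locally constant $M$ is \emph{not} the group cohomology $H^*(G;M')$ for some inflated module, and the Sylow transfer argument you invoke does not apply to $H^*(\call,-)$. Indeed the paper's family of examples is precisely the \emph{non-exotic} groups $G=GL_n(\FF_{p^k})$: there the universal cover of $|\call^{cr}|$ is (up to the $\FF^*$ covering to $PGL_n$) the Tits building, a wedge of spheres and hence not mod~$p$ acyclic, and one checks that $j\colon S\to\Gamma$ is monic. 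Beyond this misconception, your proposal does not actually complete the argument — you defer the key cohomology computation to ``the real work'' — so as written it does not constitute a proof.
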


Thus the proposition shows that unlike the situation in ordinary
group cohomology, the cohomology of a $p$-local finite group with
$p$-local coefficients  is not always a retract of the cohomology of
its Sylow subgroup with the restricted coefficients.

If $\SFL$ is a $p$-local finite group, and $(S_0,\calf_0,\call_0)$
is a subgroup of $p$-power index or index prime to $p$, then one has
a an associated covering space up to homotopy $\iota\colon
|\call_0|\to |\call|$, and thus if $M$ is a
$\Z_{(p)}[\pi_1(|\call|)]$-module, then there is a transfer map
$H^*(|\call_0|,\iota^* M)\to H^*(|\call|, M)$ associated with the
covering. We present a categorical analog of this setup, and use it
to prove our first theorem.
\begin{theorem}\label{Covers}
Let  $\SFL$ be a $p$-local finite group, and let
$(S_0,\calf_0,\call_0)$ be a subgroup of $p$-power index or index
prime to $p$. Then for any locally constant coefficient system $M$
on $\call$, there exists a map $\Tr\colon H^*(\call_0, M_0)\rightarrow
H^*(\call, M)$, where $M_0$ denotes the restriction of $M$ to
$\call_0$, such that the composite
\[H^*(\call, M)\xrightarrow{\Res} H^*(\call_0, M_0)\xrightarrow{\Tr} H^*(\call, M)\]
is multiplication by the index. Furthermore, the map $\Tr$ coincides
with the transfer map associated with the covering $|\call_0|\to|\call|$.
\end{theorem}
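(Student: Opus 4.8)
The plan is to reduce the statement to the two basic cases — index prime to $p$ and $p$-power index — handled separately, and in each case to produce the transfer by a categorical construction that is manifestly compatible with the geometric transfer on nerves. First I would recall from \cite{BCGLO2} the precise structure of the subgroup $(S_0,\calf_0,\call_0)$: in the index-prime-to-$p$ case the inclusion $\call_0\hookrightarrow\call$ arises from a normal $p'$-subgroup situation so that on nerves $|\call_0|\to|\call|$ is (up to homotopy) a regular covering with deck group a $p'$-group $\Gamma=\Gamma_{p'}(\calf)$, while in the $p$-power-index case one gets a regular covering with deck group a $p$-group, modelled on the covering $\Opp{\call}$ or $\Opl$. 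Since $M$ is locally constant, it factors (up to the fundamental-group action) through the groupoid completion, so $M$ is essentially a $\Z_{(p)}[\pi_1|\call|]$-module, and $H^*(\call,M)\cong H^*(|\call|,M)$, with the analogous identification downstairs; this is the bridge that lets me move freely between the categorical higher-limit definition and singular cohomology of the nerve.

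Next, for a finite regular covering $p\colon E\to B$ with deck group $\Gamma$ and a local coefficient system $M$ on $B$, there is the classical transfer $\Tr\colon H^*(E,p^*M)\to H^*(B,M)$ satisfying $\Tr\circ\Res=\cdot[\Gamma\!:\!1]=\cdot\,\mathrm{index}$; I would cite this as the geometric input. The substantive work is to give a \emph{chain-level, categorical} description of $\Tr$ directly on the functor cohomology $H^*(\call_0,M_0)\to H^*(\call,M)$, so that the final clause ("coincides with the covering transfer") is not an afterthought but built in. Concretely, I would construct a map of the relevant cochain complexes computing higher limits — e.g. using the bar/cobar resolution or the canonical cosimplicial replacement — by an averaging/induction formula over the fibres of $\call_0\to\call$, analogous to how the group-cohomology transfer is written as a sum over coset representatives. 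One checks: (i) this is a well-defined cochain map, (ii) on passing to cohomology the composite $\Tr\circ\Res$ multiplies by the number of fibre elements $=$ index, and (iii) under the identifications $H^*(\call,-)\cong H^*(|\call|,-)$ the categorical $\Tr$ goes to the topological covering transfer — for which it suffices to match the two on the bar-resolution cochains, where both are the same coset sum.

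The main obstacle, I expect, is step (iii) together with the bookkeeping in the $p$-power-index case: making the categorical transfer genuinely local (well-defined on higher limits over $\call_0$ and independent of choices of representatives in the fibres of $\call_0\to\call$) requires knowing that $\call_0\to\call$ behaves like a covering of categories in a strong enough sense — essentially that the fibres are free $\Gamma$-sets and that $M$ being locally constant kills the ambiguity. Here I would lean on the detailed analysis of $p$-local subgroups in \cite{BCGLO1,BCGLO2} (normality of $S_0$ in $S$, the description of $\call$ as an extension of $\call_0$ by the deck group, and the hyperfocal/residual subgroup machinery) to organize the fibrewise sum. Once the cochain-level map is in place, properties $\Tr\circ\Res = \mathrm{index}$ and agreement with the geometric transfer follow formally from the corresponding facts for coverings of spaces, using naturality of the cosimplicial replacement.
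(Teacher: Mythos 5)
Your proposal is in the right general direction---reduce to the covering $|\call_0|\to|\call|$ via the identification of locally constant functor cohomology with singular cohomology of the nerve, and build a categorical transfer compatible with the geometric one---but it takes a genuinely different, and somewhat heavier, route than the paper, and it leaves one real gap.

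The paper does not construct a cochain-level (bar or cosimplicial) transfer for this theorem. Instead, it works at the level of coefficient systems: it builds a canonical ``categorical covering'' $\pi\colon\calc_{\Gamma'}\to\calc$ as a pullback over $\calb(\Gamma)$, shows that every component of each undercategory $c\downarrow\pi$ has an initial object (so the right Kan extension $R_\pi$ is exact and, in fact, is objectwise a finite product $\prod_{\Gamma/\Gamma'}M(c)$), and then defines a natural transformation $T\colon R_\pi(\pi^*M)\to M$ by summing coordinates. The transfer on cohomology is then just $T_*$ precomposed with the inverse Shapiro isomorphism, and $T_*\circ\Res$ is multiplication by the index because the unit of the $(\iota^*,R_\iota)$ adjunction is the diagonal. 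This is cleaner than what you sketch: the well-definedness worries you raise (independence of coset representatives, freeness of the fibre action) evaporate because $T$ is defined once, canonically, on the Kan extension, and naturality does the rest. Your proposed cochain-level averaging is closer to what the paper actually does later, in Section~4, for the harder case of \emph{arbitrary} coefficients over a $p'$-index subgroup; for locally constant coefficients, the Kan-extension/Shapiro argument is more economical. Also, the paper does not split into two cases: both $p$-power and $p'$ index are handled uniformly by passing to quasicentric linking systems, where Theorem~\ref{Classification-summary}(iv) gives a covering up to homotopy with fibre $\Gamma/H$ in either case.

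The gap: you never address how the restriction $M_0$ of $M$ to $\call_0$ is even defined in the $p$-power-index case. There is in general \emph{no} inclusion $\call_0\hookrightarrow\call$; only $\call_0^q\hookrightarrow\call^q$ at the quasicentric level. For locally constant $M$ the paper fixes this by first extending $M$ (essentially uniquely) to a locally constant system $M^q$ on $\call^q$, using that $\call\hookrightarrow\call^q$ induces an equivalence of groupoid completions, and then restricting $M^q$ to $\call_0$. Without this step, the statement ``$M_0$ denotes the restriction of $M$ to $\call_0$'' has no meaning, and the map $\Res$ in the theorem is undefined. Your proposal should incorporate this extension-and-restriction construction (or an equivalent one) before the transfer can be built; once that is in place, the rest of your outline would go through, just less efficiently than the paper's Kan-extension argument.
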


Theorem \ref{Covers} is proven below as Theorem
\ref{plf-subgrps-loc-const-coeff}. Next, we specialize  to $p$-local
finite subgroups of index prime to $p$. It is in this setting that
we are able to obtain our most general result.

\begin{theorem}\label{p' index}
Let $\SFL$ be a $p$-local finite group, and let
$(S_0,\calf_0,\call_0)$ be a subgroup of index prime to $p$. Let $M$
be an arbitrary system of coefficients on $\call$. Then
there exists a map
\[\Tr\colon H^*(\call_0, M_0)\to H^*(\call, M),\]
where $M_0$ denotes the restriction of $M$ to $\call_0$, such that
the composite $\Tr\circ\Res$ is given by multiplication by the
index. Furthermore, the map $\Tr$ satisfies a double coset formula
and Frobenius reciprocity.
\end{theorem}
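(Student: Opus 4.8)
The plan is to reduce everything to a concrete algebraic model for $\call_0 \subset \call$ as a "$\calf$-stable subcategory of index prime to $p$", and then mimic the classical construction of the group-cohomology transfer via a chain-level averaging/partial-trace map. First I would recall from \cite{BCGLO2} that, up to equivalence, a subgroup $(S_0,\calf_0,\call_0)$ of index prime to $p$ has $S_0 = S$ and arises from a normal subgroup of the finite group $\Gamma_{p'}(\calf) = \pi_1(|\call|)/O^p(\cdots)$; concretely $\call_0$ is the "kernel" of a functor $\call \to \calb(\Gamma)$ for a finite $p'$-group $\Gamma$, so that $\call_0$ has the same objects as $\call$ and $\hom_{\call_0}(P,Q)$ is the subset of $\hom_\call(P,Q)$ mapping to the trivial coset. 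The key structural point is that for each pair of objects $P,Q$, the set $\hom_\call(P,Q)$ decomposes into $\call_0$-orbits indexed (after choosing coset representatives) by $\Gamma$, and $|\Gamma|$ is prime to $p$ — this invertibility of the index in $\Z_{(p)}$ is what makes the whole argument go.

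Next I would set up the cohomology computation using an explicit injective, or at least $\liminv$-acyclic, resolution. Since $H^*(\call,M) = \limn{*}_{\call} M$ can be computed by the normalized cochain complex $C^*(\call; M)$ whose $n$-cochains assign to each composable string $P_0 \to \cdots \to P_n$ an element of $M(P_n)$, and similarly for $\call_0$, I would define $\Res\colon C^*(\call;M) \to C^*(\call_0;M_0)$ by restricting cochains to strings of $\call_0$-morphisms. The transfer $\Tr\colon C^*(\call_0;M_0) \to C^*(\call;M)$ should be the "sum over coset representatives" map: given a string $\sigma = (P_0 \xrightarrow{\varphi_1} \cdots \xrightarrow{\varphi_n} P_n)$ in $\call$, one writes each $\varphi_i$ as $\gamma_i \circ \varphi_i^0$ with $\varphi_i^0 \in \call_0$ and $\gamma_i$ a chosen representative, uses the $\calf$-stability to translate the whole string into $\call_0$ up to a single "outer" element of $\Gamma$ acting via $M$, sums over the $\Gamma$-translates, and divides by nothing (the index is absorbed on the other side). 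Making this well defined and chain-homotopy-independent of the choice of representatives is the technical heart; I expect to phrase it cleanly using the equivalence $|\call|\pcom \simeq (B\Gamma)\pcom$-bundle structure, i.e. the homotopy pullback square relating $|\call_0|$, $|\call|$, $E\Gamma$, $B\Gamma$, which identifies $\Tr$ with the classical covering-space transfer and thereby makes $\Tr\circ\Res = |\Gamma|\cdot\Id$ automatic by the standard degree argument, even for non-locally-constant $M$ (the point being that for $p'$ index one does not need $M$ to be a $\pi_1$-module in the usual sense — the covering is "honest" enough).

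The main obstacle, and where I would spend the most care, is verifying the double coset formula and Frobenius reciprocity at the level of this categorical model, since here "subgroup" and "intersection of subgroups" must be interpreted fusion-theoretically: given two index-prime-to-$p$ subsystems, or a subsystem and a morphism in $\call$, the relevant "double cosets" are orbits of $\Gamma$ acting on a set of morphisms, and one must check that the combinatorics of summing over coset representatives in $C^*$ exactly reproduces the sum over double-coset representatives of composites $\Res \circ (\text{conjugation}) \circ \Tr$. I would handle Frobenius reciprocity first — it is essentially a projection-formula bookkeeping identity $\Tr(x \cdot \Res(y)) = \Tr(x)\cdot y$ that follows directly from the cochain-level definitions and the ring structure on $C^*(\call;-)$ — and then deduce, or directly verify, the double coset formula by the same orbit-counting argument used in the finite-group case, now with $\Gamma$ playing the role of $G/N$. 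Throughout, the invertibility of the index in $\Z_{(p)}$ (together with $p$-locality of the target when $M$ is merely $p$-local) guarantees that all the "divide by the order of a stabilizer" steps are legitimate.
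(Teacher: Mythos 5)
Your cochain-level construction — restricting to $\call_0$-strings for $\Res$ and summing translated/conjugated cochains over $\Gamma/H$ for $\Tr$ — is in the same spirit as the paper's cochain-level transfer $\Tr_H(\phi) = \sum_{\rcos{g} \in \Gamma/H}\phi^g$ (Definition \ref{TrH}). Your reading of the structure of $\call_0\subset\call$ (same objects, $S_0=S$, $\call_0 = \widehat{\Theta}^{-1}(H)$) is also correct. The problem is in how you propose to justify that this transfer is well-defined and satisfies $\Tr\circ\Res = |\Gamma:H|\cdot\Id$. You want to "phrase it cleanly using \dots\ the homotopy pullback square relating $|\call_0|$, $|\call|$, $E\Gamma$, $B\Gamma$, which identifies $\Tr$ with the classical covering-space transfer and thereby makes $\Tr\circ\Res = |\Gamma|\cdot\Id$ automatic by the standard degree argument, even for non-locally-constant $M$." That last clause is exactly where the argument breaks. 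For non-locally-constant $M\in\call\mod$ there is no identification $H^*(\call,M)\cong H^*(|\call|,M)$ (Proposition \ref{loc-const-coho} is stated, and true, only for locally constant $M$, and Proposition \ref{Bob's counter example} is there precisely to dramatize that the two can behave very differently), so there is no geometric covering-space transfer to compare with; the "honesty" of the covering buys you nothing once $M$ fails to factor through $\pi_1$. The paper instead establishes well-definedness and normalization purely algebraically: it proves that the full subcategory $\calg_H(P)$ of $P\downarrow\iota$ is a discrete groupoid which is a left deformation retract (Lemma \ref{RetractionR}), deduces that the right Kan extension $R_\iota$ is exact, invokes the Shapiro lemma to get $H^*(\call, R_\iota\iota^*M)\cong H^*(\call_H,\iota^*M)$, and then builds the transfer from an explicit natural transformation $\PreTr\colon R_\iota\iota^*M \Rightarrow M$. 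That chain of lemmas is the algebraic replacement for the degree argument, and it is the ingredient your sketch is missing; without it (or some other genuinely algebraic verification that the sum-over-cosets map is a well-defined cochain map), the proof does not go through for arbitrary $M$.

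Your plan for Frobenius reciprocity and the double coset formula (orbit-counting with $\Gamma$ playing the role of $G/N$, adapted from the classical group case) matches the paper's Propositions \ref{double-cosets} and \ref{prop:Frob} and should work once the cochain-level transfer is correctly grounded. But note those proofs in the paper quietly rely on Corollary \ref{iota*preserves-resolutions} (that restriction along $\iota$ preserves injective and projective resolutions), which again is a consequence of the exactness of $R_\iota$ and its dual; so the deformation-retract lemma is doing work you cannot avoid even in that part of the argument.
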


The construction of the transfer map is carried out in Section 4.
The precise statements and proof of its properties appear below as
Propositions \ref{prop:Norm}, \ref{double-cosets} and
\ref{prop:Frob}.

As a final standard application we conclude a stable elements
theorem in our context. The statement of the theorem however
requires some extra preparation, and will therefore not be stated
here. The reader who is familiar with \cite{BCGLO2} is referred to
Theorem \ref{Stable-Elts} for the precise statement.

The paper is organized as follows. In Section 1 we recall the
definition of a $p$-local finite group, and revise some of the
background material necessary for our discussion. Section 2 is
devoted to some basic concepts of homological algebra of functors,
including  a discussion of cohomology with locally constant
coefficients. We start Section 3 by presenting a family of examples
which show that in general one cannot expect a stable elements
theorem to hold for $p$-local finite groups even with respect to
locally constant coefficients. This is followed by a discussion of
the transfer for locally constant coefficient systems and subgroups
of $p$-power index and index prime to $p$. Still in Section 3 we set
up the background for the construction of a transfer map for
subgroup of index prime to $p$. The construction itself is carried
out in Section 4, where we also present a cochain level construction
of the map, and a geometric interpretation of the construction for
locally constant coefficients, where a transfer map exists for
geometric reasons.  Finally in Section 5 we derive the
standard consequences of the existence of a transfer map to the
cohomology theory of $p$-local finite groups with nontrivial
coefficients.

\ack The authors would like to thank Bob Oliver for a number of useful
conversations while we were working on this project, and
particularly for his illuminating counter example (Section 3). The
second named author would like thank Lukas Vokrinek for giving him a
copy of his bachelor thesis, which contains useful background
material on functor cohomology.


\section{$p$-local finite groups}
In this section we recall some of the basic concepts of $p$-local finite group theory. We also recall the \emph{Stable Elements Theorem}  for the cohomology of $p$-local finite groups with $p$-local constant coefficients. The reader is referred to \cite{BLO2, BCGLO1, BCGLO2} for more detail.

\subsection{Saturated Fusion Systems}
The fundamental object
underlying a $p$-local finite group is a finite $p$-group and a
fusion system over it. The concept is originally due to Puig \cite{Puig2}, but we will use the simpler, equivalent definition from \cite{BLO2}.

For a group $G$ and subgroups $P,Q\le G$ we denote by $\hom_G(P,Q)\subseteq\hom(P,Q)$
the set of all homomorphisms $P \to Q$ obtained by restriction
of inner automorphisms of $G$ to $P$.  The set of all elements $g\in G$ such that $gPg^{-1}\le Q$ is called the transporter set in $G$ from $P$ to $Q$, and is denoted $N_G(P,Q)$. Thus $\hom_G(P,Q) =N_G(P,Q)/C_G(P)$. If $g\in G$, then we denote by $c_g$ the conjugation $x \to gxg^{-1}$.

\begin{definition}[{\cite{Puig2} and \cite[Definition~1.1]{BLO2}}]
\label{fus.sys.} A \emph{fusion system} over a finite $p$-group $S$ is a
category $\calf$, where $\Ob(\calf)$ is the set of all subgroups of
$S$, and which satisfies the following two properties for all
$P,Q\le{}S$:
\begin{enumerate}
\item  $\homs(P,Q) \subseteq \homf{(P,Q)} \subseteq \Inj(P,Q)$; and
\item  each $\varphi\in\homf(P,Q)$ is the composite of an isomorphism in
$\calf$ followed by an inclusion.
\end{enumerate}
\end{definition}

If $\calf$ is a fusion system over a finite $p$-group $S$, then two
subgroups $P,Q\le S$ are said to be $\calf$-conjugate if they are
isomorphic as objects in $\calf$.

\newcommand{\out}{\mathrm{Out}}

\begin{definition}[{\cite{Puig2}, and \cite[Definition~1.2]{BLO2}}]
\label{sat.Frob.} Let $\calf$ be a fusion system over a $p$-group
$S$. A subgroup $P\le{}S$ is said to be
\begin{enumerate}
\item  \emph{fully centralized in $\calf$} if
$|C_S(P)|\ge|C_S(P')|$ for all $P'\le S$ which are $\calf$-conjugate to $P$;
\item  \emph{fully normalized in $\calf$} if
$|N_S(P)|\ge|N_S(P')|$ for for all $P'\le S$ which are $\calf$-conjugate to $P$;
\item $\calf$-centric if $C_S(P') = Z(P')$ for all $P'\le S$ which are $\calf$-conjugate to $P$.
\item $\calf$-radical if $\out_{\calf}(P)\defeq \aut_\calf(P)/\Inn(P)$ does not contain a nontrivial normal $p$-subgroup.
\end{enumerate}
A fusion system $\calf$ is said to be  \emph{saturated} if the
following two conditions hold:
\begin{enumerate}
\item[(I)] For every $P\le{}S$ that is fully normalized in $\calf$, $P$ is fully
centralized in $\calf$ and $\aut_S(P)\in\sylp(\aut_\calf(P))$.
\item[(II)] If $P\le{}S$ and $\varphi\in\hom_\calf(P,S)$ are such that $P'=\varphi(P)$ is
fully centralized, then $\varphi$ extends to a morphism $\widebar{\varphi}\in\hom_\calf(N_\varphi,S)$, where
    $$ N_\varphi = \{ g\in{}N_S(P) \,|\, \varphi c_g\varphi^{-1} \in
    \aut_S(P') \}. $$
\end{enumerate}
\end{definition}

If $G$ is a finite group and $S\in\sylp(G)$, then the category
$\calf=\calf_S(G)$, whose objects are all subgroups $P\le S$, and
whose morphisms are $\homf(P,Q)=\hom_G(P,Q)$, is a saturated fusion
system (\cite[Proposition~1.3]{BLO2}).

\subsection{Centric Linking Systems}
Let $\calf^c\subseteq\calf$ denote the full
subcategory whose objects are the $\calf$-centric subgroups of $S$.

\begin{definition}[{\cite[Definition~1.7]{BLO2}}]  \label{L-cat}
Let $\calf$ be a fusion system over the $p$-group $S$.  A
\emph{centric linking system associated to $\calf$} is a category
$\call$ whose objects are the $\calf$-centric subgroups of $S$,
together with a functor $\pi \:\call\to\calf^c$, and
``distinguished'' monomorphisms $P\Right1{\delta_P}\aut_{\call}(P)$
for each $\calf$-centric subgroup $P\le{}S$, which satisfy the
following conditions.
\begin{enumerate}
\renewcommand{\labelenumi}{\textup{(\Alph{enumi})}}
\item  $\pi$ is the identity on objects and surjective on morphism sets.  For each pair of objects
$P,Q\in\Ob(\call)$, $Z(P)$ acts freely on $\mor_{\call}(P,Q)$ by
composition (upon identifying $Z(P)$ with
$\delta_P(Z(P))\le\aut_{\call}(P)$), and $\pi$ induces a bijection
        $$ \mor_{\call}(P,Q)/Z(P) \Right5{\cong} \homf(P,Q). $$

\item  For each $\calf$-centric subgroup $P\le{}S$ and each $x\in{}P$,
$\pi(\delta_P(x))=c_x\in\aut_{\calf}(P)$.

\item  For each $f\in\mor_{\call}(P,Q)$ and each $x\in{}P$,
$f\circ\delta_P(x)=\delta_Q(\pi(f)(x))\circ{}f$.
\end{enumerate}

A \emph{$p$-local finite group} is a triple $\SFL$,
where $S$ is a finite $p$-group, $\calf$ is a saturated fusion
system over $S$, and $\call$ is a centric linking system associated
to $\calf$.  The \emph{classifying space} of a $p$-local finite group $\SFL$ is
the $p$-completed nerve $|\call|\pcom$.
\end{definition}

If $\calf=\calf_S(G)$ for some finite group $G$, then $P\le S$ is
$\calf$-centric if and only if $P$ is \emph{$p$-centric} in $G$;
that is, if and only if $Z(P)\in\sylp(C_G(P))$, or equivalently if and
only if $C_G(P) \cong Z(P) \times C_G'(P)$, where $C_G'(P)$ is a
group of order prime to $p$. The centric linking system of $G$ is
defined to be the category $\call^c_S(G)$, whose objects are the
subgroups of $S$ that are $p$-centric in $G$, and whose morphism
sets are $\mor_\call(P,Q) = N_G(P,Q)/C_G'(P)$. The triple
$(S,\calf_S(G), \call_S^c(G))$ then forms a $p$-local finite group whose
classifying space is equivalent to $BG\pcom$ \cite{BLO2}.

\subsection{Compatible Systems of Inclusions} \label{sub:InclRes}
For a $p$-local finite group $\SFL$, a morphism $\varphi \colon P
\to Q$ in $\call$ can be thought of as a "lift" of the group
homomorphism $\widebar{\varphi} = \pi(\varphi)$ in the fusion system
$\calf$, and the number of such lifts is $|Z(P)|$. It will often be
convenient to extend concepts associated to group homomorphisms to
morphisms in $\call$. For instance, we define the image of $\varphi$
to be the image of $\widebar{\varphi}$, and denote it by
$\varphi(P)$ or $P^\varphi$. (Observe that since the morpism is
written on the right, we have $P^{\psi\circ \varphi} =
(P^{\varphi})^\psi$.)  Just as in $\calf$, a morphism in $\call$ can
be restricted to a subgroup, and also induces an isomorphism from
its source  to its image. To make sense of these restrictions, we
first need a good notion of "inclusions" in $\call$. This is
developed in \cite[Definition 1.11]{BCGLO2}, and we recall the
definitions here.

\begin{definition}[{\cite[Def. 1.11(b)]{BCGLO2}}]
Let $\SFL$ be a $p$-local finite group.
A \emph{compatible set of inclusions for $\call$} is a choice of morphisms $\iota_P^Q \in \mor_{\call}(P,Q)$,
one for each pair of $\calf$-centric subgroups $P \leq Q$, such that $\iota_S^S = \Id_S$, and
the following hold for all $P \leq Q \leq R$,
 \begin{enumerate}
 \item[(i)] $\pi(\iota_P^Q)$ is the inclusion $P \hookrightarrow Q$;
 \item[(ii)] $\iota_Q^R \circ \iota_P^Q = \iota_P^R$;
 \end{enumerate}
\end{definition}

We often write $\iota_P$ for $\iota_P^S$. The existence of a compatible set
of inclusions for $\call$ is proved in \cite[Proposition 1.13]{BCGLO2}. A
compatible set of inclusions for $\call$ allows us to talk about restrictions of morphisms in $\call$. That is, for a morphism $\varphi \in \mor_{\call}(P,Q)$ and
$\calf$-centric subgroups $P' \leq P$ and $Q' \leq Q$ such that $\varphi\circ\iota_{P'}^{P}(P') \leq Q'$,
there is a unique morphism $\varphi_{P'}^{Q'} \in \mor_{\call^q}(P',Q')$ with
$ \iota_{Q'}^Q \circ \varphi_{P'}^{Q'} = \varphi \circ \iota_{P'}^{P}.$ We refer to
$\varphi_{P'}^{Q'}$ as the restriction of $\varphi$ to $P'$.  To simplify notation, we will often write $\varphi_{|P'}$ instead of $\varphi_{P'}^{Q'}$
when there is no danger of confusion. If $P'$ is clear from the context, we will sometimes omit it from the notation as well.

Fix a compatible system of inclusions. Let $\pi_1(|\call|,S)$ be the fundamental group of
$|\call|$ with basepoint at the vertex $S$, and let $\calb(\pi_1(|\call|,S))$
be the category associated to the fundamental group $\pi_1(|\call|,S)$.
One obtains a functor
 \[ J \colon \call \to \calb(\pi_1(|\call|,S)) \]
that sends each object to the unique object in the target, and sends a morphism $f \colon P \to Q$ to the class
of the loop $ \iota_Q * f * \iota_P^{-1} $. Composing with the
distinguished monomorphism $\delta_S \colon S  \to \aut_{\call}(S)$, we get a functor
 \[ \calb(j) \colon \calb(S) \xrightarrow{\calb(\delta_S) } \calb(\aut_{\call}(S)) \subseteq \call \xrightarrow{J} \calb(\pi_1(|\call|,S)), \]
and a corresponding homomorphism
 \[ j\colon S \to \pi_1(|\call|). \]
A construction of the functor $J$  in a more general setting is described in Subsection \ref{loc-const-coeff}.

\subsection{A Stable Elements Theorem}
The cohomology of a finite group $G$ with coefficients in a $p$-local
module can be computed by a fundamental result due to Cartan and
Eilenberg \cite{CE} (known as the \emph{stable elements theorem}).
Their original statement can be reinterpreted as follows. Let
$\orb_S(G)$ denote the category whose objects are the subgroups of
$S$, and whose morphisms $P\to Q$ are representations ($Q$-conjugacy
classes of homomorphisms) induced by conjugation in $G$. Given a
$\Z_{(p)}[G]$-module $M$, there is a functor on $\orb_S(G)^{op}$
which sends $P\le S$ to $H^*(P,M)$, where $M$ becomes a
$\Z_{(p)}[P]$-module via restriction. The Cartan-Eilenberg stable elements theorem
can then be restated as claiming that the following isomorphism holds
\[H^*(G,M) \cong \lim_{\orb_S(G)^{op}}H^*(-,M).\]

Theorem 5.8 in \cite{BLO2} is the analogous statement for $p$-local
finite groups, and where the module of coefficients is the field
$\Fp$.  Specifically, if $(S,\calf,\call)$ is a $p$-local finite
group, and $\orb(\calf)$ is the orbit category of $\calf$, i.e., the
category with the same objects, and where morphisms $P\to Q$ are
given by $\hom_\calf(P,Q)/\Inn(Q)$, then
\[H^*(|\call|,\Fp) \cong \lim_{\orb(\calf)^{op}}H^*(-,\Fp).\]
From this one can deduce that the same statement is true for any
$\Z_{(p)}$-module of coefficients with a trivial $\pi_1(|\call|)$-action.

More generally, using a stable transfer construction it is shown in
\cite{KR:ClSpec} (see also \cite{CM}) that the stable elements theorem holds for any
(non-equivariant) stably representable cohomology theory.
However, as we will observe at the end of  Section \ref{homological algebra}, this statement is far
from being true for cohomology with nontrivial coefficients.

\subsection{Finite Index Subgroups in $p$-local Finite Groups }

We now discuss the setup in which we are able to define a transfer
map on $p$-local group homology and cohomology.
Given a saturated fusion system $\calf$ over a $p$-group $S$, the
paper \cite{BCGLO2} defines what it means to be a subsystem of
$\calf$ of a finite index, which in this context is either a power
of $p$ or prime to $p$. We refer to the latter case as a $p'$ index
subsystem. The paper also gives a classification of all
subsystems of $\calf$ of $p$-power or $p'$ index, which we recall here. We point out that generically (i.e., without considering iterations) these are the only cases where it makes sense to talk about a subsystem where the notion of an index is well defined. 

To any saturated fusion system $\calf$ over a $p$-group $S$, one
associates two finite groups $\Gamma_p(\calf)$ and
$\Gamma_{p'}(\calf)$. If $\calf$ admits an associated centric
linking system $\call$, then these groups turn out to be the maximal
$p$-power and $p'$ quotients of $\pi_1(|\call|)$, respectively. Both
groups depend only on $\calf$ (and not on the existence or nature of
an associated centric linking system $\call$). The group
$\Gamma_p(\calf)$ is given by $S/O^p_\calf(S)$, where the divisor is
the hyperfocal subgroup of $S$ with respect to $\calf$ (see
\cite[Sec. 2]{BCGLO1}). If $\calf$ admits an associated centric
linking system, then $\Gamma_p(\calf)\cong\pi_1(|\call|\pcom)$. The
group $\Gamma_{p'}(\calf)$ has a more complicated description in
\cite{BCGLO2}, but was observed  by Aschbacher to be
$\pi_1(|\calf^c|)$.

Subsystems of $\calf$ of $p$-power or $p'$ index are
in bijective correspondence with subgroups of $\Gamma_p(\calf)$ and
$\Gamma_{p'}(\calf)$, respectively. If $\SFL$ is a $p$-local finite
group, and we let $\Gamma$ denote either $\Gamma_p(\calf)$ or
$\Gamma_{p'}(\calf)$, then with each subgroup $H\le\Gamma$ one has
an associated $p$-local finite group $(S_H,\calf_H,\call_H)$, and
$|\call_H|$ is homotopy equivalent to a covering space of $|\call|$
with fibre $\Gamma/H$.

It is for this type of subgroups of a $p$-local finite group that we
are able to define a transfer in cohomology (or homology) with
coefficients  $M \in \call\mod$. In the case of a subgroup of $p'$ index,
we can do so in full generality, and we define a cochain-level transfer
in cohomology for any functor of coefficients. These general
methods do not carry over to the $p$-power index case, but we can still
define a transfer map for locally constant coefficient systems.

We next state a summary of the main classification result from
\cite{BCGLO2} for fusion subsystem of $p$-power or $p'$ index. Some
minor modifications of the original statement will be dealt with in
the proof. Before stating the theorem, we set up some notation.

Let $\SFL$ be a $p$-local finite group, and let $\calf^q$ and
$\call^q$ be the associated quasicentric fusion and linking systems
\cite[Sec. 3]{BCGLO1}. Let $\Gamma$ denote $\Gamma_p(\calf) =
\pi_1(|\call|\pcom,S)$ or $\Gamma_{p'}(\calf) = \pi_1(|\calf^c|,S)$,
where in both cases $S$ denotes the basepoint given by the vertex
$S$. Fix a compatible choice of inclusions $\{\iota_P^Q\}$ for
$\callq$, and let $J \colon \call^q\to \calb(\pi_1(|\call^q|, S)$
be the resulting functor as defined in Section \ref{sub:InclRes}.
Composing with the obvious projection
$\widehat{\theta}\colon\pi_1(|\call^q|,S)=\pi_1(|\call|,S)\to
\Gamma$, one gets a functor
\[\widehat{\Theta}\colon\callq\to \calb(\Gamma).\]
We will denote the restriction of $\widehat{\Theta}$ to $\call$ by
the same symbol. Notice that $\widehat{\Theta}$ sends the chosen
inclusions in $\call^q$ to the identity (since $J$ does). Let
$\theta\colon S\to\Gamma$ denote the restriction of
$\widehat\Theta$ to $S$ via the monomorphism $\delta_S \colon S \to
\aut_{\call^q}(S) \subseteq \call^q$. (Equivalently, $\theta =
\widehat{\theta}\circ j$, where $j\colon S \to \pi_1(|\call|,S)$ is
the homomorphism defined in Section \ref{sub:InclRes}.) For any
subgroup $H\le\Gamma$, let $\llx{\widehat{\Theta}}H\subseteq\callq$
be the subcategory with the same objects and with morphism set
$\widehat{\Theta}^{-1}(H)$, and let
$\callq_H\subseteq\llx{\widehat{\Theta}}H$ be the full subcategory
obtained by restricting to subgroups of $S_H\defeq\theta^{-1}(H)$.
Finally, let $\lf{\widehat{\Theta}}H$ be the fusion system over
$S_H$ generated by $\pi(\callq_H)\subseteq\calfq$ and restrictions
of morphisms, and let $\call_H\subseteq\callq_H$ be the full
subcategory on those objects which are $\calf_H$-centric.

\begin{theorem}[\cite{BCGLO2}]\label{Classification-summary}
Let $\SFL$ be a $p$-local finite group. Then, with the notation
above the following are satisfied.
\begin{enumerate}
\item[(i)]$\lf{\Theta}H$ is a saturated fusion system over $S_H$,
and $\call_H$ is an associated centric linking system. Thus the
triple $(S_H,\calf_H,\call_H)$ is a $p$-local finite group.
\item[(ii)] If $\Gamma=\Gamma_{p'}(\calf)$ then a  subgroup $P\le{}S_H$ is
$\lf{\Theta}H$-centric (fully $\calf_H$-centralized, fully
$\calf_H$-normalized) if and only if it is $\calf$-centric (fully
$\calf$-centralized, fully
  $\calf$-normalized). If $\Gamma= \Gamma_p(\calf)$ then the same
  statements hold, replacing centric by quasicentric.
\item[(iii)] There is a 1--1 correspondence between
subgroups $H\le \Gamma$ and $p$-local subgroups of  $\SFL$ with
$p$-power or $p'$ index (as appropriate). The correspondence is given by
$H\longleftrightarrow (S_H,\calf_H,\call_H)$.
\item[(iv)] $|\call_H|$ is homotopy equivalent to the
covering space of $|\callq|\simeq|\call|$ with fibre $\Gamma/H$.
\item[(v)] The homomorphism $\aut_\call(S) \to \Gamma$ induced by
the restriction of $\widehat{\Theta}\colon \callq \to \calb(\Gamma)$
to $\aut_{\call}(S)$ is surjective.
\end{enumerate}
\end{theorem}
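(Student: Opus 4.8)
Up to repackaging these are the content of the main classification theorem of \cite{BCGLO2}, so the plan is to quote that result and reconcile the bookkeeping. The only genuine point is the identification of the group $\Gamma$: for $\Gamma=\Gamma_{p'}(\calf)$ one invokes Aschbacher's identification $\Gamma_{p'}(\calf)\cong\pi_1(|\calf^c|,S)$ recalled in the introduction, and for $\Gamma=\Gamma_p(\calf)$ one uses $\Gamma_p(\calf)=S/O^p_\calf(S)\cong\pi_1(|\call|\pcom,S)$ from the hyperfocal subgroup theorem of \cite{BCGLO1}. Once one checks that $\widehat\Theta$ corresponds, under these identifications, to the defining projection maps of \cite{BCGLO2}, the subcategories $\callq_H$, $\calf_H$, $\call_H$ built above coincide with the ones there, and (i), (iii), (iv) follow directly. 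For (ii) one must be mildly careful: in the $p$-power index case \cite{BCGLO2} phrases the centric/normalization hypotheses in terms of quasicentric subgroups, which is exactly why the statement switches ``centric'' to ``quasicentric'' there; the proof is to transcribe the relevant lemmas of \cite{BCGLO2}.

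\textbf{Part (v), case $\Gamma=\Gamma_p(\calf)$.} This is essentially formal. By construction $\theta\colon S\to\Gamma_p(\calf)$ is the canonical projection $S\to S/O^p_\calf(S)$ (this is how $\Gamma_p(\calf)$ is identified in \cite{BCGLO1}), hence surjective; and by definition $\theta=\widehat\Theta\circ\delta_S$ with $\delta_S(S)\subseteq\aut_\call(S)$. Therefore $\Gamma_p(\calf)=\theta(S)=\widehat\Theta(\delta_S(S))\subseteq\widehat\Theta(\aut_\call(S))\subseteq\Gamma_p(\calf)$, so $\widehat\Theta(\aut_\call(S))=\Gamma_p(\calf)$.

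\textbf{Part (v), case $\Gamma=\Gamma_{p'}(\calf)=\pi_1(|\calf^c|,S)$.} First I would factor $\widehat\Theta|_\call$ as $\call\xrightarrow{\pi}\calf^c\xrightarrow{J_\calf}\calb\bigl(\pi_1(|\calf^c|,S)\bigr)$, where $J_\calf$ is the functor of Section~\ref{sub:InclRes} built from the subgroup inclusions of $\calf^c$; this holds because $\widehat\theta$ is induced by $\pi$ on fundamental groups and $\pi$ carries the chosen inclusions of $\call$ to the subgroup inclusions. Since $\pi$ is surjective on morphism sets (axiom (A)), $\pi(\aut_\call(S))=\aut_\calf(S)$, so it is enough to show that $J_\calf(\aut_\calf(S))$ generates $\pi_1(|\calf^c|,S)$. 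Next I would observe that the subcategory of $\calf^c$ spanned by the inclusions is the poset of $\calf$-centric subgroups, which has $S$ as maximal element and hence has contractible nerve; so by the edge-path presentation, $\pi_1(|\calf^c|,S)$ is generated by the loops $J_\calf(\varphi)$ over all morphisms $\varphi$ of $\calf^c$, subject to $J_\calf(\psi\circ\varphi)=J_\calf(\psi)J_\calf(\varphi)$ and $J_\calf(\iota)=1$ for inclusions. By Alperin's fusion theorem every morphism of $\calf^c$ is a composite of restrictions of automorphisms of $S$ and of essential subgroups, and $J_\calf(\chi_{|P})=J_\calf(\chi)$ for any restriction (inclusions being trivial); so $\pi_1(|\calf^c|,S)$ is generated by $J_\calf(\aut_\calf(S))$ together with the subgroups $J_\calf(\aut_\calf(R))$ for $R$ essential.

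\textbf{Main obstacle.} What remains, and is the hard step, is to show that each $J_\calf(\aut_\calf(R))$ with $R$ essential already lies in the subgroup generated by the conjugates of $J_\calf(\aut_\calf(S))$. Two facts set this up. First, $\widehat\Theta=J_\calf\circ\pi$ kills $\delta_S(S)$, since $\theta$ maps a $p$-group to a $p'$-group; hence $J_\calf$ annihilates $\Inn(S)=\aut_S(S)$ and therefore also $\aut_S(R)$ for every $R\leq S$ (these are restrictions of elements of $\Inn(S)$). Second, taking $R$ fully normalized we have $\aut_S(R)\in\sylp(\aut_\calf(R))$, so $J_\calf(\aut_\calf(R))$ is a homomorphic image of the largest $p'$-quotient $\aut_\calf(R)/O^{p'}(\aut_\calf(R))$. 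The plan is then a Frattini-type argument: using extension axiom (II) and the strongly $p$-embedded structure of $\out_\calf(R)$, exhibit $p'$-generators of $\aut_\calf(R)/O^{p'}(\aut_\calf(R))$ as products of restrictions of morphisms defined on subgroups properly containing $R$, and induct on $|S:R|$; alternatively, extract this reduction from the analysis of $p'$-index subsystems in \cite{BCGLO2}, where $\Gamma_{p'}(\calf)$ is produced precisely as a quotient of a group on which $\aut_\calf(S)$ acts surjectively. Granting this, $J_\calf(\aut_\calf(S))$ generates $\pi_1(|\calf^c|,S)$, and combined with $\pi(\aut_\call(S))=\aut_\calf(S)$ this gives the surjectivity asserted in (v).
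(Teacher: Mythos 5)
For parts (i)--(iv) and for (v) in the case $\Gamma=\Gamma_p(\calf)$, your argument matches the paper's: (i)--(iv) are quoted directly from \cite{BCGLO2} (Proposition 3.8 and Theorem 3.9 there), and the $\Gamma_p(\calf)=S/O^p_\calf(S)$ case of (v) is immediate because $\theta$ is the canonical surjection $S\twoheadrightarrow S/O^p_\calf(S)$ and factors through $\delta_S(S)\subseteq\aut_\call(S)$, exactly as you say.

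For (v) with $\Gamma=\Gamma_{p'}(\calf)$, the paper does not attempt a direct argument; it simply points to the definition of $\Gamma_{p'}(\calf)$ in \cite[Thm 5.4]{BCGLO2}, where the group is built as a quotient onto which $\aut_\call(S)$ visibly surjects. Your proposed direct route --- factor $\widehat\Theta|_\call$ through $|\calf^c|$, reduce via the contractibility of the inclusion poset and the edge-path presentation to showing $J_\calf(\aut_\calf(S))$ generates $\pi_1(|\calf^c|,S)$, invoke Alperin's fusion theorem, then handle the essential subgroups by a Frattini-style induction --- has a genuine gap at precisely the step you flag. Essential subgroups are, by definition, exactly those $R$ whose $\calf$-automorphisms are \emph{not} generated by restrictions of morphisms between overgroups of $R$, so the proposed reduction (``exhibit $p'$-generators of $\aut_\calf(R)/O^{p'}(\aut_\calf(R))$ as products of restrictions of morphisms defined on subgroups properly containing $R$'') runs directly into the obstruction that makes $R$ essential in the first place; an induction on $|S:R|$ will not close without essentially reproving the analysis in \cite{BCGLO2}. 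You do offer, as an alternative, to extract the statement from \cite{BCGLO2}, and that is exactly what the paper does; so you should drop the incomplete direct sketch and simply cite, as the paper's one-line proof does.
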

\begin{proof}
Parts (i), (ii), (iii) and (iv) are included in Proposition 3.8 and
Theorem 3.9 of \cite{BCGLO2}. Part (v) is clear in the case of $\Gamma_p(\calf) = S/O^p_\calf(S)$, and follows from the definition of $\Gamma_{p'}(\calf)$ given in \cite[Thm 5.4]{BCGLO2}).
\end{proof}


\section{Homological algebra of functors} \label{homological algebra}
In this section we  develop the background we need from homological algebra. Most or all the results we present here are well known to the expert, but are included here for the convenience of the reader.

Throughout this  paper, let
$\R$ denote a fixed commutative ring with a unit.
Let $\R\mod$ denote the category of (left) $\R$-modules,
and let $\R\alg$ denote the category of (left) $\R$-algebras.
All categories we consider in this article will be small.
For a category $\calc$, let $\calc\mod$ and $\calc\alg$ be the
categories whose objects are functors \mbox{$\calc \to \R\mod$}
or \mbox{$\calc \to \R\alg$}, respectively, and whose
morphisms are natural transformations. We will refer
to an object of $\calc\mod$ as a \emph{$\calc$-module}, and
to an object of $\calc\alg$ as  a \emph{$\calc$-algebra}. Depending on context we may sometimes refer to objects of $\calc\mod$ as \emph{system of coefficients on $\calc$},
or a \emph{$\calc$-diagram of $\R$-modules}, and similarly for objects of $\calc\alg$. Notice that we have not discussed variance of functors at all. By convention, all functors we deal with are \emph{covariant}. If we need to discuss contravariant functors (and in certain contexts we will), we shall consider them as objects in the module category over the opposite category.

\subsection{Functor Cohomology.}
Given an
$\R$-module $M$, the constant $\calc$-diagram $M_\calc$  is
the functor which takes every object in $\calc$ to $M$ and every
morphism to the identity. The inverse limit on a category $\calc$ is a functor
\[\lim_{\calc} \colon \calc\mod \to \R\mod.\]
It comes with a morphism $\displaystyle \left(\lim_\calc U\right) \xrightarrow{\lambda_U} U$,
for any $\calc$-diagram $U$ of $\R$-modules, and
is characterized by the universal property that if $M$ is any
$\R$-module, and if $\alpha\colon M_\calc\to U$ is any natural transformation,
then there exists a
unique $\R$-module homomorphism $\widehat{\alpha}\colon M\to
\lim_\calc U$ such that $\alpha = \lambda_U \circ \widehat{\alpha}_\calc$, where $\widehat{\alpha}_\calc$ is the functor induced on the respective constant diagrams by $\widehat{\alpha}$.

The universal property of the inverse limit functor implies an
obvious identification:
\begin{equation} \label{eq:limashom}
   \lim_{\calc}U \cong \hom_{\calc\mod}(\R_\calc,U).
\end{equation}
This shows in particular that the inverse limit functor is left exact.
Its right derived functors applied to $U\in\calc\mod$ are usually
referred to as the \emph{higher limits of $U$}, or as the
\emph{cohomology of $\calc$ with coefficients in $U$} (being the right derived functors of the Hom functor). Thus if
\mbox{$U \to I_\bullet$} is an injective resolution of $U$ in
$\calc\mod$, and \mbox{$P_{\bullet} \to \R_\calc$} is a projective
resolution of $\R_\calc$ in $\calc\mod$, then
\[ \higherlimnoarrow{i}{\calc} U = H^{i}(\hom_{\calc\mod}(\R_\calc,I_\bullet)) =
H^{i}(\hom_{\calc\mod}(P_\bullet,U) ).\]
\begin{Notation}
Throughout this article we use the notation $H^*(\calc, U)$ to
denote $\underset{\calc}{\lim}^* U$. This is standard notation in
the subject, and is better suited for purposes.
\end{Notation}

Dual to the limit functor there is the colimit functor
\[\displaystyle \highercolimnoarrow{}{\calc}\colon\calc\mod\to\R\mod.\] One defines the
\emph{homology of $\calc$ with coefficients in $U\in\calc\mod$} as the \emph{higher colimits of $U$},  i.e. the left derived functors of the colimit functor, applied to $U$. In this section,
and throughout the paper, we focus on cohomology and mostly leave the reader to dualize
the discussion to obtain analogous results in homology.
\subsection{Cup Products in Functor Cohomology.} Let $\calc$ be a small category. For
chain complexes $C_\bullet$ and $C'_\bullet$ in $\calc\mod$, one
obtains a chain complex $(C \otimes C')_\bullet$ with
\[(C\otimes C')_n \defeq \sum_{k+l=n} C_k \otimes C'_l \]
and differential $\delta$ induced by the differentials of
$C_\bullet$ and $C'_\bullet$ (which we also denote by $\delta$) via
the formula \mbox{$\delta (x\otimes y) = \delta x \otimes y + (-1)^k
x \otimes \delta y$} for \mbox{$x \in C_k$}, \mbox{$y \in C'_l$}.

It is a standard result that if both $C_\bullet$ and $C'_\bullet$
are exact then $(C \otimes C')_\bullet$ is exact, and likewise that
$(C \otimes C')_\bullet$ is projective if both $C_\bullet$ and
$C'_\bullet$ are  projective.
In particular, given projective resolutions \mbox{$P_{\bullet} \to
\R_\calc$} and \mbox{$P'_{\bullet} \to \R_\calc$} of the constant
functor on $\calc$, one obtains a projective resolution \mbox{$(P
\otimes P')_{\bullet} \to \R_\calc \otimes \R_\calc \cong \R_\calc$}

Let $M, M'\in\calc\mod$. For \mbox{$\sigma \in
\hom_{\calc\mod}(P_k,M) $} and \mbox{$ \sigma' \in
\hom_{\calc\mod}(P'_l,M')$}, let \mbox{$\sigma \times \sigma' \in
\hom_{\calc\mod}((P \otimes P')_{k+l},M \otimes M')$} be the natural
transformation
\[ \sigma \times \sigma' \colon (P \otimes P')_{k+l} \xrightarrow{proj} P_k
\otimes P'_l \xrightarrow{\sigma \otimes \sigma'} M \otimes M'.\]
This gives a bilinear pairing
\[ \hom_{\calc\mod}(P_\bullet,M) \otimes \hom_{\calc\mod}(P'_\bullet,M') \to
\hom_{\calc\mod}((P \otimes P')_\bullet,M \otimes M')\]
\[ \sigma\otimes\sigma' \longmapsto \sigma\times\sigma'.\]
One can check that this is a map of cochain complexes, so after
taking cohomology we obtain a bilinear pairing on higher limits
\[ H^*(\calc, M) \otimes H^*(\calc, N)
\longrightarrow H^*(\calc, M \otimes N),\]
\[ x \otimes y \mapsto x \times y.\]
One can furthermore check that this is independent of the choice of
projective resolutions. This pairing is called the cross product
pairing.

For any $A\in \calc\alg$ one has a  multiplication transformation \mbox{$\mu \colon A
\otimes A \to A$} which induces a
homomorphism on cohomology. Composing with the cross
product pairing, we obtain the cup product.
\begin{definition}
Let $A\in\calc\alg$. For $x \in H^k(\calc, A)$ and
$y \in H^{l}(\calc, A)$, the cup product $x \cup y \in
H^{k+l}(\calc, A)$, (or $xy$,) is the image of $x
\otimes y$ under the homomorphism
 \[H^{*}(\calc, A) \otimes H^{*}(\calc, A)
 \xrightarrow{\times} H^{*}(\calc, A \otimes A) \xrightarrow{ \mu_* }
 H^{*}(\calc, A).\]
\end{definition}

The cup product constructed here has the algebraic properties we
expect, as stated in the next proposition. The proof is routine.
\begin{proposition}\label{cup properties}
For a functor $A\in\calc\alg$, the cup product on $H^*(\calc, A)$ is
associative, graded-commutative, and has a multiplicative unit. Thus
$H^*(\calc, A)$ is a graded-commutative ring with a
unit.
\end{proposition}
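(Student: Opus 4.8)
The plan is to verify each of the three claimed algebraic properties — associativity, graded-commutativity, and existence of a unit — at the level of the explicit cochain-level cross product, and then transport them to cohomology. The key observation is that all three properties already hold, up to chain homotopy, for the cross product pairing $\sigma\times\sigma'$ on the cochain complexes $\hom_{\calc\mod}(P_\bullet,-)$, because they hold for the underlying tensor product of chain complexes and for the projective resolution $(P\otimes P')_\bullet\to\R_\calc$; one then composes with the structure maps of $A$ and uses that those maps satisfy the corresponding identities on the nose.

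First I would fix a projective resolution $P_\bullet\to\R_\calc$ and recall from the discussion above that $(P\otimes P)_\bullet$ and $(P\otimes P\otimes P)_\bullet$ are again projective resolutions of $\R_\calc$ (using $\R_\calc\otimes\R_\calc\cong\R_\calc$). For associativity: both $(x\cup y)\cup z$ and $x\cup(y\cup z)$ are, by construction, obtained by pulling back $\sigma\otimes\sigma'\otimes\sigma''$ along two a priori different chain maps $(P\otimes P\otimes P)_\bullet\to P_\bullet$ covering the identity of $\R_\calc$, and then applying the two ways of bracketing the associative multiplication $\mu\colon A\otimes A\to A$. Since $A$ is an $\R$-algebra-valued functor, $\mu\circ(\mu\otimes\id)=\mu\circ(\id\otimes\mu)$ as natural transformations $A\otimes A\otimes A\to A$, and since any two chain maps between projective resolutions covering the same map are chain homotopic, the two cochains agree after passing to cohomology. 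Graded-commutativity is handled the same way: the twist isomorphism $T\colon(P\otimes P)_\bullet\xrightarrow{\cong}(P\otimes P)_\bullet$, $x\otimes y\mapsto(-1)^{|x||y|}y\otimes x$, is a chain map covering $\id_{\R_\calc}$, hence chain homotopic to the identity, and commutativity of $\mu$ (i.e. $\mu\circ T_A=\mu$ where $T_A$ is the corresponding twist on $A\otimes A$) then gives $x\cup y=(-1)^{kl}\,y\cup x$ in $H^{k+l}(\calc,A)$.

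For the unit I would use that the unit of $A$ is a natural transformation $\eta\colon\R_\calc\to A$, which by \eqref{eq:limashom} is the same datum as an element $1_A\in\lim_\calc A=H^0(\calc,A)$, represented at cochain level by $P_0\to\R_\calc\xrightarrow{\eta}A$; then $1_A\cup x$ is computed by the composite $(P\otimes P)_\bullet\xrightarrow{\eta\otimes\sigma}A\otimes A\xrightarrow{\mu}A$, and since the left unit axiom $\mu\circ(\eta\otimes\id)=\id$ holds for $A$ and the projection $(\R_\calc\otimes P)_\bullet\to P_\bullet$ is a chain equivalence compatible with the augmentations, this collapses to $\sigma$ up to chain homotopy, so $1_A\cup x=x=x\cup 1_A$. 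Finally I would note the bilinearity and the well-definedness independent of the choice of resolution were already recorded above, so together these establish that $H^*(\calc,A)$ is a graded-commutative unital ring.

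The only genuine point requiring care — the ``main obstacle'', such as it is — is bookkeeping of signs: one must check that the twist map $T$ really is a chain map with the stated sign convention (so that graded-commutativity comes out with the sign $(-1)^{kl}$ and not, say, an incorrect sign), and that the sign in the differential $\delta(x\otimes y)=\delta x\otimes y+(-1)^k x\otimes\delta y$ propagates consistently through the iterated tensor products used for associativity. This is precisely the kind of verification that is standard (it is the functor-category analog of the classical computation for $H^*(G;R)$) and for which ``the proof is routine'' is an honest summary; I would therefore only indicate the chain-homotopy argument and the relevant algebra axioms of $A$, and leave the explicit sign chase to the reader.
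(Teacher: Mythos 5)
The paper declares this proof ``routine'' and gives no details, so there is no exposition to compare against; your proposal simply carries out the standard argument, and it does so correctly. The structure you use — pass to the cochain level, reduce associativity and commutativity to associativity/commutativity of the tensor product of chain complexes together with the corresponding algebra axioms for $\mu$, and invoke uniqueness up to chain homotopy of chain maps between projective resolutions covering a given map — is exactly what ``routine'' points to.

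Two small remarks. First, for associativity you can in fact do slightly better than a chain-homotopy argument: since the tensor product of chain complexes is strictly associative, the cochain representatives of $(x\cup y)\cup z$ and $x\cup(y\cup z)$ on $(P\otimes P\otimes P)_\bullet$ are literally equal (using associativity of $\mu$ and naturality of the cross product), and the only homotopy comparison needed is the one already built into the well-definedness of the cross product pairing; your phrasing ``two a priori different chain maps $(P\otimes P\otimes P)_\bullet \to P_\bullet$'' has the arrow pointing the wrong way and slightly overcomplicates the point, but the conclusion is the same. Second, graded-commutativity of course requires that $A$ take values in \emph{commutative} $\R$-algebras, as your use of $\mu\circ T_A = \mu$ correctly makes explicit; this hypothesis is implicit in the statement of the proposition (and in the paper's convention for $\R\alg$), and your proof neither hides nor violates it. The sign verification you defer — checking that $T$ is a chain map for the given differential — does work out, so there is no gap.
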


If \mbox{$F \colon \calc\to \cald$} is any functor, one has an exact functor
\mbox{$F^* \colon \cald\mod\to \calc\mod $} defined by
$F^*(\alpha) \defeq \alpha\circ F$.
The next proposition says that the cup product is natural in both
$\calc$ and $A$. Again, the proof is routine.

\begin{proposition} Let $F \colon \calc \to \cald$ be a functor,
let $A, B\in \cald\alg$, and let $\eta \colon A \to B$
be a natural transformation. Then the following hold for $x,y \in
H^*(\cald,A)$:
\begin{enumerate}
\item[(a)] The  homomorphism
  \mbox{$F^* \colon H^{*}(\cald, A) \to
  H^{*}(\calc, F^*A) $} induced by $F$
satisfies
\[ F^*(x \cup y) = F^*x \cup F^*y.\]

\item[(b)] The  homomorphism
  \mbox{$ \eta_* \colon H^{*}(\cald, A) \to
  H^{*}(\cald, B) $} induced by $\eta$
satisfies
\[ \eta_*(x \cup y) = \eta_*x \cup \eta_*y. \]
\end{enumerate}
\end{proposition}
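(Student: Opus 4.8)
The plan is to verify both naturality statements at the cochain level, using the explicit description of the cross product and cup product in terms of projective resolutions, and to exploit the freedom (already noted in the construction) to choose resolutions compatibly. The two parts are essentially formal consequences of the fact that every map in sight is realized by an honest map of cochain complexes, so the main work is bookkeeping rather than any genuine difficulty.

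For part (b), fix a projective resolution $P_\bullet \to \R_\cald$ and note that $\eta \colon A \to B$ induces a map of cochain complexes $\hom_{\cald\mod}(P_\bullet, A) \to \hom_{\cald\mod}(P_\bullet, B)$ by post-composition, namely $\sigma \mapsto \eta\circ\sigma$. The point is that for $\sigma \in \hom_{\cald\mod}(P_k,A)$ and $\sigma' \in \hom_{\cald\mod}(P'_l,A)$ one has $(\eta\circ\sigma)\times(\eta\circ\sigma') = (\eta\otimes\eta)\circ(\sigma\times\sigma')$ as maps $(P\otimes P')_{k+l}\to B\otimes B$, since both sides factor through the projection to $P_k\otimes P'_l$ and then apply $\eta\otimes\eta$ to the target of $\sigma\otimes\sigma'$. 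Combining this with the commuting square relating the multiplications $\mu_A\colon A\otimes A\to A$ and $\mu_B\colon B\otimes B\to B$ via $\eta$ (which holds precisely because $\eta$ is a morphism of $\cald$-algebras, i.e. $\eta\circ\mu_A = \mu_B\circ(\eta\otimes\eta)$) gives the identity at the cochain level; passing to cohomology yields $\eta_*(x\cup y) = \eta_* x \cup \eta_* y$.

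For part (a), the subtlety is that to compute $F^*$ on cohomology one should compare a projective resolution $P_\bullet\to\R_\cald$ of the constant functor on $\cald$ with a projective resolution $Q_\bullet\to\R_\calc$ of the constant functor on $\calc$. Here I would use the standard fact that $F^*P_\bullet$ is a (not necessarily projective, but acyclic) resolution of $F^*\R_\cald = \R_\calc$, so there is a chain map $Q_\bullet \to F^*P_\bullet$ over $\R_\calc$, unique up to chain homotopy, compatible with tensor products: one can arrange $(Q\otimes Q)_\bullet \to F^*(P\otimes P)_\bullet$ to be the tensor square of $Q_\bullet\to F^*P_\bullet$. The induced map $\hom_{\cald\mod}(P_\bullet,A)\to\hom_{\calc\mod}(F^*P_\bullet,F^*A)\to\hom_{\calc\mod}(Q_\bullet,F^*A)$ computes $F^*$ on cohomology, and by construction it is multiplicative for the cross product up to the natural isomorphism $F^*(A\otimes A)\cong F^*A\otimes F^*A$; since $F^*$ applied to $\mu_A$ is the multiplication on the $\calc$-algebra $F^*A$, composing gives $F^*(x\cup y) = F^*x\cup F^*y$ after passing to cohomology, independence of the choices being guaranteed by the chain homotopy uniqueness already invoked in the construction of the cup product.

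The only step requiring a little care is the compatibility of the comparison chain map $Q_\bullet\to F^*P_\bullet$ with the tensor-product structure in part (a); this is where one wants to be slightly careful, but it follows from the same acyclic-models-style argument that underlies the well-definedness of the cross product, so I do not expect it to present a real obstacle. Both statements are, as the paper says, routine, and the proof amounts to assembling these cochain-level identities and invoking naturality of passing to cohomology.
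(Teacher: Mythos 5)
Your proposal is correct and is precisely the routine cochain-level argument that the paper leaves unwritten: part (b) reduces to the identity $(\eta\circ\sigma)\times(\eta\circ\sigma') = (\eta\otimes\eta)\circ(\sigma\times\sigma')$ together with the algebra-morphism condition $\eta\circ\mu_A=\mu_B\circ(\eta\otimes\eta)$, and part (a) uses the comparison map $Q_\bullet\to F^*P_\bullet$ (which exists since $F^*$ is exact, so $F^*P_\bullet$ is an acyclic resolution of $\R_\calc$) together with its tensor square. You correctly isolate the only point that needs any care, namely that the comparison map for $(Q\otimes Q)_\bullet$ may be taken to be the tensor square of the comparison map for $Q_\bullet$, and that any other choice agrees up to chain homotopy, which is exactly the same uniqueness argument already used to show the cross product is well defined.
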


\subsection{Kan Extensions and the Shapiro Lemma.} \label{sub:KanShapiro}
Let \mbox{$\iota\colon \calc\to \cald$} be any functor. For each
object $d\in\cald$ one defines the overcategory $\iota\da d$ to be
the category with objects $(c,\alpha)$, where $c\in\calc$ and
$\alpha\colon \iota(c)\to d$ is a morphism in $\cald$. Morphisms
from $(c,\alpha)$ to $(c',\alpha')$ in $\iota\da d$ are morphisms
$\gamma\colon c\to c'$ in $\calc$ such that
$\alpha'\circ\iota(\gamma)=\alpha$. The undercategory $d\da\iota$ is
defined analogously. Both categories admit an obvious forgetful
functor to $\calc$, and for $M\in\calc\mod$ we denote the composite of $M$ with the forgetful functor by $M_\sharp$.

The functor \mbox{$\iota^* \colon \cald\mod\to \calc\mod $} induced
by $\iota$ has a right adjoint $R_\iota$ and a left adjoint
$L_\iota$ called the right and left Kan extension along $\iota$, respectively. The Kan extensions of a given functor $M\in\calc\mod$ is determined
objectwise by
\[R_\iota(M)(d) = \lim_{\overcat{d}{\iota}}{M_\#}
\quad\quad\mathrm{and}\quad\quad
L_\iota(M)(d) = \colimnoarrow_{\overcat{\iota}{d}}{M_\#},\]
and a morphism $d \to d'$ in $\cald$ induces homomorphisms
\[R_\iota(M)(d) \to R_\iota(M)(d')
 \quad\quad\mathrm{and}\quad\quad
 L_\iota(M)(d) \to L_\iota(M)(d'),\]
via the universal properties of the limit and colimit functors.
From these descriptions of Kan extensions one easily obtains
the isomorphisms
 \[\lim_{\cald} R_\iota(M) = \lim_{\calc} M
 \quad\quad\mathrm{and}\quad\quad
   \colimnoarrow_{\cald} L_\iota(M) = \colimnoarrow_{\calc} M.\]

For a functor $\iota\colon\calc\to\cald$ and a  system of coefficients $M \in \calc\mod$ there is a homomorphism called the \emph{Shapiro map}
  \[ \Sh_M \colon H^{*}(\cald, R_\iota(M)) \to H^{*}(\calc, M), \]
which is constructed as follows.
Let $M \to I_\bullet$ be an injective resolution of $M$. Since $R_\iota$
is the right adjoint of the exact functor $\iota^*$ it preserves injectives,
and we have a (possibly non-exact) cochain complex
$R_\iota M \to R_\iota I_0 \to R_\iota I_1 \to \cdots$
in which every term after $R_\iota M$ is injective.
If $R_\iota M \to I'_\bullet$ is an injective resolution of $R_\iota M$, then
the identity transformation of $R_\iota M$ lifts (non-uniquely) to
a cochain map $\chi \colon I'_\bullet \to R_\iota I_\bullet$, which induces
a cochain map of limits
 \[ \lim_{\cald}  I'_\bullet \defeq\hom_\cald(\R_\cald, I'_\bullet) \xrightarrow{\chi_*}  \hom_\cald(\R_\cald,R_\iota I_\bullet) \xrightarrow[\cong]{\rho^{-1}} \hom_\calc(\R_\calc,  I_\bullet) \defeq
 \lim_{\calc} I_\bullet, \]
and $\Sh_M$ is defined as the induced map in cohomology.
Here,
\[\rho\colon \hom_\calc(\R_\calc, I_\bullet) = \hom_\calc(\iota^*\R_\cald, I_\bullet) \to\hom_\cald(\R_\cald, R_\iota I_\bullet)\]
is the adjunction isomorphism.
The Shapiro map is independent of the choice of the injective resolutions $I_\bullet$ and
$I'_\bullet$, and the cochain map $\chi$, and is natural with respect to morphisms
$M\to M'$ in $\calc\mod$.
Hence it induces a natural transformation
\[ \Sh \colon H^{*}(\cald,  R_\iota(-)) \to
H^{*}(\calc, -)  \]
which will be referred to as the \emph{Shapiro transformation}. The important property of the Shapiro transformation is given by the following lemma.

\begin{lemma}[The Shapiro Lemma] \label{lem:Shapiro} Let \mbox{$\iota \colon \calc \to
\cald$} be a functor. If the right Kan extension $R_\iota$ is exact
then the Shapiro transformation is a natural isomorphism of functors.
\end{lemma}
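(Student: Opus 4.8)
The plan is to show that the Shapiro map $\Sh_M$ is an isomorphism for every $M \in \calc\mod$ by exploiting the defining property $\lim_{\cald} R_\iota(M) = \lim_{\calc} M$ together with exactness of $R_\iota$. First I would recall that the Shapiro map was constructed from an injective resolution $M \to I_\bullet$ in $\calc\mod$, the resulting complex $R_\iota M \to R_\iota I_\bullet$ (whose terms past $R_\iota M$ are injective, since $R_\iota$ is the right adjoint of the exact functor $\iota^*$ and hence preserves injectives), and a comparison cochain map $\chi \colon I'_\bullet \to R_\iota I_\bullet$ lifting $\Id_{R_\iota M}$ from an injective resolution $R_\iota M \to I'_\bullet$ of $R_\iota M$ in $\cald\mod$. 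The crux of the argument is that when $R_\iota$ is exact, the complex $R_\iota M \to R_\iota I_\bullet$ is \emph{itself} an injective resolution of $R_\iota M$.

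The key steps, in order: (1) Observe that exactness of $R_\iota$ applied to the exact complex $0 \to M \to I_0 \to I_1 \to \cdots$ yields an exact complex $0 \to R_\iota M \to R_\iota I_0 \to R_\iota I_1 \to \cdots$, i.e.\ $R_\iota M \to R_\iota I_\bullet$ is an injective resolution. (2) Invoke the fundamental comparison theorem of homological algebra: any two injective resolutions of $R_\iota M$ are chain homotopy equivalent, and the comparison map $\chi \colon I'_\bullet \to R_\iota I_\bullet$ lifting the identity is a chain homotopy equivalence. (3) Since $\hom_\cald(\R_\cald, -) = \lim_\cald$ is an additive functor, it sends the chain homotopy equivalence $\chi$ to a chain homotopy equivalence $\chi_* \colon \lim_\cald I'_\bullet \to \lim_\cald R_\iota I_\bullet$, hence an isomorphism on cohomology. (4) The adjunction isomorphism $\rho$ is, levelwise, a genuine isomorphism of cochain complexes $\hom_\calc(\R_\calc, I_\bullet) \xrightarrow{\cong} \hom_\cald(\R_\cald, R_\iota I_\bullet)$ (using $\iota^*\R_\cald = \R_\calc$), so it too induces an isomorphism on cohomology. (5) Conclude that $\Sh_M$, being the composite of the isomorphisms induced by $\chi_*$ and $\rho^{-1}$ on cohomology, is an isomorphism; combined with naturality (already established in the construction of the Shapiro transformation), this gives the natural isomorphism claimed.

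I do not anticipate a serious obstacle here; the statement is essentially a formal consequence of the construction together with the comparison theorem for injective resolutions. The one point that warrants a little care is step (2)--(3): one must check that $\chi$ is a homotopy \emph{equivalence} and not merely a quasi-isomorphism, so that applying the (not necessarily exact) functor $\lim_\cald$ still yields a cohomology isomorphism. This is exactly where the uniqueness-up-to-homotopy of injective resolutions is used, and it is legitimate precisely because both $I'_\bullet$ and $R_\iota I_\bullet$ are resolutions of $R_\iota M$ by injectives in $\cald\mod$ — the latter thanks to the exactness hypothesis. A secondary routine verification is that $\rho$ is compatible with the differentials, which is immediate from naturality of the adjunction.
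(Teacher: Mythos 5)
Your proof is correct and takes essentially the same approach as the paper: the key observation in both is that exactness of $R_\iota$ makes $R_\iota M \to R_\iota I_\bullet$ an injective resolution of $R_\iota M$. The paper then simply cites the already-established independence of $\Sh_M$ from the choices of $I'_\bullet$ and $\chi$ to take $I'_\bullet = R_\iota I_\bullet$ and $\chi = \mathrm{Id}$, whereas you re-derive the consequence directly by invoking the comparison theorem to see that $\chi$ is a chain homotopy equivalence (so $\chi_*$ remains a cohomology isomorphism after applying the additive functor $\lim_\cald$). This is slightly more verbose but mathematically equivalent.
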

\begin{proof}
If $R_\iota$ is exact, then $R_\iota M \to R_\iota I_\bullet$ is an
injective resolution, so we can take $I'_\bullet = R_\iota I_\bullet$
and $\chi = Id$ in the construction of $\Sh_M$ for $M \in \calc\mod$,
and it follows that $\Sh_M$ is an isomorphism.
\end{proof}

The right Kan extension functor is always left exact as it is a
right adjoint. Similarly the left Kan extension functor is right
exact. In some instances the left and right Kan extension functors
are naturally isomorphic. In this case both Kan extension functors
are exact, and in particular the Shapiro Lemma holds. The Shapiro
lemma has an analogous homological version, involving the derived
functors of the colimit, and the left Kan extension.

\subsection{Deformation Retracts of Categories}

We now study a condition on a functor which ensures that it induces a natural
isomorphism between the corresponding derived functors of the limit
and colimit.

\begin{definition}\label{deformation retract} Let $f \colon \calc \to \cald$ be a functor between small categories. We say that $f$ is a \emph{left deformation retract} of $\cald$ if there exists a functor $r \colon \cald \to \calc$ such that $r \circ f = 1_\calc$, and a natural transformation $\eta \colon f \circ r \to 1_\cald$ satisfying $\eta_{f(c)} = 1_{f(c)}$ for $c \in \calc$. When $f$ is the inclusion of a subcategory, we say that $\calc$ is a \emph{left deformation retract of $\cald$}. \emph{Right deformation retracts} are defined dually.
\end{definition}

Notice that this falls short of defining $\iota$ as left adjoint to
$r$ in the sense that we do not require that $r\eta$ is the identity
transformation on $r$. The definition should remind the reader of deformation
retracts of spaces. Indeed, if $\calc$ is a (left or right) deformation
retract of $\cald$, then $|\calc|$ is a deformation retract of $|\cald|$.

\begin{lemma} \label{lem:DefRetLimits}
Let \mbox{$f \colon \calc \to \cald$} be a functor between small categories.
\begin{enumerate}
  \item[(a)]
If $f$ is a left deformation retract then $f$ preserves limits.
  \item[(b)]
If $f$ is a right deformation retract then $f$ preserves colimits.
\end{enumerate}
\end{lemma}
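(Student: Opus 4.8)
Throughout I read the assertion ``$f$ preserves limits'' in the sense appropriate to the functor cohomology of this paper: the canonical comparison homomorphism
\[ \Phi_M\colon \lim_{\cald} M \longrightarrow \lim_{\calc} f^*M, \]
obtained by feeding the cone $f^*(\lambda_M)$ into the universal property of $\lim_{\calc}$, is an isomorphism natural in $M\in\cald\mod$ (equivalently, $\lim_{\cald}\cong\lim_{\calc}\circ\, f^*$ as functors $\cald\mod\to\R\mod$). Using the standard description of a limit over a small category as a submodule of a product, $\lim_{\cald}M=\{(x_d)_{d\in\Ob\cald}\mid M(\alpha)(x_d)=x_{d'}\text{ for all }\alpha\colon d\to d'\}$, and likewise for $\lim_{\calc}f^*M$, the map $\Phi_M$ is simply $(x_d)_d\mapsto (x_{f(c)})_c$. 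The plan for part~(a) is to build an explicit two-sided inverse of $\Phi_M$ out of the retraction data, and to obtain part~(b) by dualizing.

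So fix $M\in\cald\mod$, with $r$ and $\eta$ as in Definition~\ref{deformation retract} (so $r\circ f=1_{\calc}$, $\eta\colon f\circ r\Rightarrow 1_{\cald}$, and $\eta_{f(c)}=1_{f(c)}$). First I would define $\Psi_M\colon \lim_{\calc}f^*M\to\lim_{\cald}M$ by $\Psi_M\bigl((y_c)_c\bigr)=(x_d)_d$ with $x_d\defeq M(\eta_d)\bigl(y_{r(d)}\bigr)$, where $\eta_d\colon f(r(d))\to d$. The one point that requires care is that $(x_d)_d$ really lies in $\lim_{\cald}M$: for $\alpha\colon d\to d'$, naturality of $\eta$ gives $\alpha\circ\eta_d=\eta_{d'}\circ f(r(\alpha))$, hence $M(\alpha)\circ M(\eta_d)=M(\eta_{d'})\circ M(f(r(\alpha)))$, and feeding in the compatibility of $(y_c)_c$ under the $\calc$-morphism $r(\alpha)\colon r(d)\to r(d')$ yields $M(\alpha)(x_d)=M(\eta_{d'})(y_{r(d')})=x_{d'}$, as needed; $\R$-linearity and naturality in $M$ are immediate. (Conceptually, $\Psi_M$ is the composite of the comparison map $\lim_{\calc}f^*M\to\lim_{\cald}r^*f^*M=\lim_{\cald}(f r)^*M$ for the functor $r$ with the map $\lim_{\cald}(f r)^*M\to\lim_{\cald}M$ induced by restricting $M$ along $\eta$.)

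It then remains to verify $\Phi_M\circ\Psi_M=\Id$ and $\Psi_M\circ\Phi_M=\Id$. For the first, the $f(c)$-coordinate of $\Psi_M\bigl((y_c)_c\bigr)$ is $M(\eta_{f(c)})\bigl(y_{r(f(c))}\bigr)=M(1_{f(c)})(y_c)=y_c$, where both defining identities $\eta_{f(c)}=1_{f(c)}$ and $r\circ f=1_{\calc}$ are used; for the second, compatibility of $(x_d)_d$ under $\eta_d\colon f(r(d))\to d$ forces $x_d=M(\eta_d)\bigl(x_{f(r(d))}\bigr)$, which is exactly the $d$-coordinate of $\Psi_M\bigl(\Phi_M((x_d)_d)\bigr)$. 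This proves~(a). Part~(b) is the evident dual: one runs the same construction with limits, universal cones, and the submodule-of-a-product description replaced respectively by colimits, universal cocones, and the quotient-of-a-coproduct presentation, using the natural transformation $\eta\colon 1_{\cald}\Rightarrow f\circ r$ supplied by a right deformation retract. I do not expect a genuine obstacle: the only delicate point is the bookkeeping in the well-definedness of $\Psi_M$, together with the reminder---noted in the comment following Definition~\ref{deformation retract}---that $f$ need not be a left adjoint of $r$, so the isomorphism cannot be quoted from an adjunction and the hypothesis $\eta_{f(c)}=1_{f(c)}$ must be invoked precisely at the step flagged above.
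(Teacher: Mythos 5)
Your proof is correct, and it follows a route that differs in packaging from the paper's. You construct an explicit two-sided inverse $\Psi_M$ to the comparison map $\Phi_M\colon\lim_{\cald}M\to\lim_{\calc}f^*M$, using the formula $x_d=M(\eta_d)\bigl(y_{r(d)}\bigr)$, and verify directly that it lands in the limit and inverts $\Phi_M$. The paper instead observes that $r\circ f=1_\calc$ lets one view $f$ as a subcategory inclusion, and then invokes the standard cofinality criterion from MacLane: $f$ preserves limits provided each overcategory $f\downarrow d$ is connected; connectedness is established by exhibiting $(r(d),\eta_d)$ and showing via naturality of $\eta$ that any $(c,u)$ maps to it by $r(u)$. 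Both arguments turn on the same three facts --- that $(r(d),\eta_d)$ is a distinguished object of $f\downarrow d$, that $\eta$-naturality relates any other object to it, and that $\eta_{f(c)}=1_{f(c)}$ collapses things back over $\calc$ --- but yours trades the citation to MacLane for an explicit formula for the inverse, which makes the proof self-contained (and incidentally exhibits where the hypothesis $\eta_{f(c)}=1_{f(c)}$ is used, exactly as you flag). The paper's version is shorter and slots directly into the discussion of cofinality; yours is more elementary and constructive. Both are sound, including your dualization for part~(b).
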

\begin{proof}
It suffices to prove part (a), as part (b) follows by duality, so
assume that $f$ is a left deformation retract and let $r \colon
\cald \to \calc$ and $\eta \colon f \circ r \to 1_\cald$ be as in
Definition \ref{deformation retract}. Since $r \circ f = 1$, we can
regard $f$ as the inclusion of a subcategory. To show that $f$
preserves limits, it therefore suffices (\cite{MacL}) to show that
$f$ is left cofinal in the sense that for every object $d$ in
$\cald$, the overcategory $f\downarrow d$ is connected (meaning it is nonempty and there exists a zigzag of morphisms between any two objects).

First, the map $\eta_d \colon f(r(d)) \to d$ gives rise to an object
$(r(d),\eta_d)$ in $f \downarrow d$. Now, if $(c,u)$ is another
object consisting of an object $c$ in $\calc$ and a morphism $f(c)
\xrightarrow{u} d$ in $\cald$, then the natural transformation $\eta$ gives
rise to the following commutative square.
\[
\xymatrix{
  (f \circ r)(f(c)) \ar[rr]^{(f\circ r)(u)} \ar@{=}[d]^{\eta_{f(c)}} && (f\circ r)(d) \ar[d]^{\eta_d}  \\
  f(c) \ar[rr]^u && d 
}
\]
Thus we have a morphism $r(u) \colon (c,u) \to (r(d),\eta_d)$ in
$f\downarrow d$, and in particular $(c,u)$ is in the same connected
component as $(r(d),\eta_d)$, proving that $f \downarrow d$.
\end{proof}

Lemma \ref{lem:DefRetLimits} actually holds if $f$ is a \emph{weak}
deformation retract, for which we require only that for $c \in
\calc$, we have $\eta_{f(c)} = f(h)$ for some morphism $h$ in
$\calc$. Using the full strength of deformation retracts, one can
prove a stronger result, namely that if $f$ is a left deformation
retract then it induces an isomorphism on cohomology, and if it is a
right deformation retract then it induces an isomorphism on homology
(Lemma \ref{lem:DefRetLimits} makes that claim only in dimension 0,
which is all we need for our purposes).

\subsection{Cohomology with Locally Constant Coefficients}
\label{loc-const-coeff}
\newcommand{\kk}{\mathrm{R}}

We now discuss an important  special case of functor cohomology, namely the case where the coefficient system is locally constant.

Let $\calc$ be a small connected category, and let $\widehat{\calc}$ denote the groupoid completion of $\calc$, i.e., the category with the same objects as $\calc$, and
where all morphisms are formally inverted. Let $\pi\colon\calc\to \widehat{\calc}$ be the obvious functor.
Choose an object $c_0\in\calc$. For any other object $c\in\calc$, choose a morphism $\phi_c\in\mor_{\widehat{\calc}}(c,c_0)$,
and take $\phi_{c_0}$ to be the identity.  Then one gets isomorphisms of sets
\[j_{a,b}\colon\mor_{\widehat{\calc}}(a,b) \to \aut_{\widehat{\calc}}(c_0)\quad\mathrm{by}\quad \varphi \mapsto\phi_b\circ
\varphi\circ\phi_a^{-1}.\]
Let $\Gamma\defeq\aut_{\widehat{\calc}}(c_0)$, and let $\iota\colon\calb(\Gamma)\to\widehat{\calc}$ be the inclusion functor.
Assembling the isomorphisms $j_{a,b}$ together for all pairs of objects in $\widehat{\calc}$, one gets a functor
\[J\colon \widehat{\calc}\to \calb(\Gamma),\]
which is an equivalence of categories (but depends on the choices made). In particular, $J\circ\iota$ is the identity on
$\calb(\Gamma)$, while $\iota\circ J$ is naturally isomorphic to the identity on $\widehat{\calc}$. Moreover, by \cite[Prop. 1]{Qu},
the group $\Gamma$ is naturally isomorphic to $\pi_1(|\calc|, c_0)$, and the map induced on nerves by the composite $J\circ\pi$
induces an isomorphism on fundamental groups.

A system of coefficients $M\in\calc\mod$ is said to be \emph{locally constant} if for every morphism $\varphi\colon a\to b$ in $\calc$,
$M(\varphi)$ is an isomorphism. Notice that $M$ is locally constant if and only if $M$ factors  as $M = \widehat{M}\circ\pi$ for
a unique functor $\widehat{M}\in\widehat{\calc}\mod$.
Let $\kk$ be a commutative ring with a unit. An $\kk[\Gamma]$-module  is  a functor $N \colon \calb(\Gamma) \to \kk\mod$. The next
lemma shows that every locally constant system of coefficients on $\calc$ is, up to a natural isomorphism, a $\kk[\Gamma]
$-module composed with the functor $J$ constructed above.

\begin{lemma}\label{loc-const-factorization}
Let $\calc$ be a small connected category, and let $M\in\calc\mod$ be a locally constant system of coefficients on $\calc$.
Then $M$ is naturally isomorphic  to $N\circ J\circ\pi$, where $N$ is the $\kk[\Gamma]$-module $\widehat{M}\circ\iota$.
Moreover, this defines an equivalence of categories between the category of $\kk[\Gamma]$-modules and the category of
locally constant functors on $\calc$.
\end{lemma}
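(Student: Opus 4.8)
The plan is to read the isomorphism off the equivalence $J$, and then to upgrade the construction to a pair of mutually quasi-inverse functors.

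\emph{The natural isomorphism.} Recall from the discussion above that $\iota\circ J$ is naturally isomorphic to $1_{\widehat{\calc}}$; fix a natural isomorphism $\tau\colon\iota\circ J\xrightarrow{\ \cong\ }1_{\widehat{\calc}}$. Since $M$ is locally constant it factors uniquely as $M=\widehat{M}\circ\pi$ with $\widehat{M}\in\widehat{\calc}\mod$, and $N=\widehat{M}\circ\iota$ by definition. Whiskering $\tau$ on the left by $\widehat{M}$ and on the right by $\pi$ gives a natural transformation
\[ (N\circ J)\circ\pi \;=\; \widehat{M}\circ\iota\circ J\circ\pi \;\xrightarrow{\;(\widehat{M}\tau)\pi\;}\; \widehat{M}\circ\pi \;=\; M, \]
each component of which is an isomorphism because $\tau$ is; this is the asserted isomorphism $N\circ J\circ\pi\cong M$, and it is natural in $M$ by construction.

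\emph{The two functors.} Let $\Phi$ send a $\kk[\Gamma]$-module $N$ to $\Phi(N)=N\circ J\circ\pi$, and a morphism of $\kk[\Gamma]$-modules to its whiskering with $J\circ\pi$; the functor $\Phi(N)$ is locally constant because $\pi$ sends morphisms of $\calc$ into $\widehat{\calc}$, where all morphisms are invertible, and $J$ and $N$ preserve isomorphisms. Let $\Psi$ send a locally constant $M=\widehat{M}\circ\pi$ to $\Psi(M)=\widehat{M}\circ\iota$. To define $\Psi$ on morphisms, the one point that needs an argument is that precomposition with $\pi$ is a \emph{bijection} from the natural transformations $\widehat{M}\to\widehat{M'}$ of functors on $\widehat{\calc}$ onto the natural transformations $M\to M'$ of the associated locally constant functors on $\calc$; the inverse sends $\alpha\colon M\to M'$ to the family of the same component maps, which is natural over all of $\widehat{\calc}$ because naturality with respect to a morphism $\varphi$ of $\calc$ is equivalent to naturality with respect to $\varphi^{-1}$ once $M(\varphi)$ and $M'(\varphi)$ are isomorphisms, and naturality is closed under composition. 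Writing $\widehat{\alpha}$ for this lift, set $\Psi(\alpha)=\widehat{\alpha}\circ\iota$, a $\Gamma$-equivariant map $N\to N'$.

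\emph{The composites.} Finally I would compare $\Psi\Phi$ and $\Phi\Psi$ with the identities. Here $\Psi\Phi=\mathrm{id}$ strictly: $\Phi(N)=(N\circ J)\circ\pi$ already exhibits $N\circ J$ as the unique factorization of $\Phi(N)$ through $\pi$, so $\Psi(\Phi(N))=(N\circ J)\circ\iota=N\circ 1_{\calb(\Gamma)}=N$, and the same computation treats morphisms. For the other composite, $\Phi(\Psi(M))=\widehat{M}\circ\iota\circ J\circ\pi$, and $(\widehat{M}\tau)\pi$ from the first step is an isomorphism $\Phi(\Psi(M))\xrightarrow{\ \cong\ }M$; one checks, using the naturality of $\tau$ and the definition of $\Psi$ on morphisms, that these isomorphisms assemble into a natural transformation $\Phi\Psi\Rightarrow\mathrm{id}$. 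I expect the bijection-on-natural-transformations claim used to define $\Psi$ to be the only genuinely non-formal point; the rest is bookkeeping with the identities $J\circ\iota=1_{\calb(\Gamma)}$, $\iota\circ J\cong 1_{\widehat{\calc}}$, and the unique factorization of locally constant functors through $\pi$ recorded before the lemma.
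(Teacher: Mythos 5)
Your proposal is correct and follows the same route as the paper's (very terse) proof: exhibit the isomorphism by whiskering the natural isomorphism $\iota\circ J\cong 1_{\widehat{\calc}}$, then show the two assignments $N\mapsto N\circ J\circ\pi$ and $M\mapsto\widehat{M}\circ\iota$ are mutually quasi-inverse functors. The only point you develop that the paper leaves implicit is the bijection between natural transformations over $\calc$ and over $\widehat{\calc}$, which is indeed the one non-formal step needed to make $\Psi$ a well-defined functor.
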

\begin{proof}
Since $\iota\circ J$ is naturally isomorphic to the identity on $\widehat{\calc}$, one has
\[M = \widehat{M}\circ\pi \cong \widehat{M}\circ \iota\circ J\circ \pi = N\circ J\circ\pi,\]
as claimed.

For the second statement, notice that the correspondences which takes an $\kk[\Gamma]$-module $N$ to
$N\circ J\circ\pi$ and a locally constant system of coefficients $M\in\calc\mod$ to $\widehat{M}\circ\iota$
are natural on $N$ and $M$ respectively, and using the first statement, define the equivalence of categories claimed.
\end{proof}

\begin{proposition}\label{loc-const-coho}
Let $\calc$ be a small connected category, and let $M\in\calc\mod$ be a locally constant functor.
Then, with the notation above,  there is an isomorphism
\[H^*(\calc, M) \cong H^*(|\calc|, \widehat{M}\circ\iota)\]
which is natural in $M$.
\end{proposition}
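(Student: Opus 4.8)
The plan is to identify both sides of the claimed isomorphism with the cohomology of the groupoid $\calb(\Gamma)$ — equivalently, with the group cohomology $H^*(\Gamma, N)$ where $N = \widehat{M}\circ\iota$ — and then to chase naturality. First I would use Lemma \ref{loc-const-factorization} to write $M \cong N\circ J\circ\pi$, so that on the algebraic side it suffices to compute $H^*(\calc, N\circ J\circ\pi)$. The composite $J\circ\pi\colon \calc\to\calb(\Gamma)$ is precisely the functor $J$ of Subsection \ref{sub:InclRes} in this more general setting; I would show that the pullback functor $(J\circ\pi)^*\colon \calb(\Gamma)\mod\to\calc\mod$ induces an isomorphism on cohomology when applied to any $\calb(\Gamma)$-module. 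The cleanest route is via the Shapiro Lemma (Lemma \ref{lem:Shapiro}): one checks that the right Kan extension $R_{J\circ\pi}$ along $J\circ\pi$ is exact, so that $H^*(\calb(\Gamma), R_{J\circ\pi}(M))\cong H^*(\calc,M)$, and separately that $R_{J\circ\pi}(N\circ J\circ\pi)\cong N$. Exactness of $R_{J\circ\pi}$ should follow from the fact that each overcategory $(J\circ\pi)\downarrow *$ (there is only one object $*$ in $\calb(\Gamma)$) has contractible nerve — indeed $|\calc|$ is connected and $J\circ\pi$ induces an isomorphism on $\pi_1$ by \cite[Prop. 1]{Qu}, which is what forces the relevant higher limits of the constant-coefficient contributions to vanish. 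Alternatively, and perhaps more robustly, I would cite that $\pi\colon\calc\to\widehat\calc$ and the equivalence $J\colon\widehat\calc\to\calb(\Gamma)$ together induce an isomorphism $H^*(\calc, M)\cong H^*(\widehat\calc, \widehat M)\cong H^*(\calb(\Gamma), N)$: the second isomorphism is immediate since $J$ is an equivalence of categories and hence $J^*$ is exact with exact inverse, while the first is the statement that pulling back along the groupoid completion $\pi$ does not change the cohomology of a locally constant functor.

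Second I would handle the topological side. By the standard identification of group cohomology with the cohomology of the classifying space, $H^*(\calb(\Gamma), N)\cong H^*(B\Gamma, N) = H^*(|\calb(\Gamma)|, N)$, where on the right $N$ is viewed as a local coefficient system on $B\Gamma = |\calb(\Gamma)|$ via $\pi_1(B\Gamma)\cong\Gamma$. Now $|\calc|$ is connected with $\pi_1(|\calc|, c_0)\cong\Gamma$, and the map $|\calc|\to |\calb(\Gamma)| = B\Gamma$ induced by $J\circ\pi$ is a $\pi_1$-isomorphism; moreover the local system $\widehat M\circ\iota$ on $|\calc|$ is, by construction, the pullback of $N$ along this map. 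So it remains to observe that $H^*(|\calc|, \widehat M\circ\iota) \cong H^*(B\Gamma, N)$. This is not automatic — $|\calc|\to B\Gamma$ is generally not a homotopy equivalence — but it holds because $B\Gamma$ is a $K(\Gamma,1)$: the map $|\calc|\to B\Gamma$ factors the canonical map $|\calc|\to B\pi_1(|\calc|)$, and the cohomology of a space with coefficients in a local system that is pulled back from its fundamental groupoid agrees with the cohomology of the corresponding $K(\pi,1)$. Concretely, I would invoke that for any connected space $X$ with $\pi_1(X)\cong\Gamma$ and any $\Gamma$-module $N$, the natural map $H^*(\Gamma, N)\to H^*(X, N)$ is an isomorphism in degree $0$ and injective in degree $1$ — and, crucially, that for the full identification one uses that both sides compute $\Ext^*_{\R[\Gamma]}$ over the chains: the singular (or cellular) chain complex of the universal cover $\widetilde{|\calc|}$ is a complex of free $\R[\Gamma]$-modules, and computing $H^*(|\calc|, \widehat M\circ\iota) = H^*(\hom_{\R[\Gamma]}(C_*(\widetilde{|\calc|}), N))$ against the bar resolution gives $H^*(\Gamma, N)$ provided $\widetilde{|\calc|}$ is acyclic. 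This last point — acyclicity of the universal cover of $|\calc|$ — would need to be addressed, and is the place where the hypotheses on $\calc$ (connectedness, and whatever higher-connectivity is implicit) really enter.

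On reflection, the technically cleanest organization avoids the acyclicity issue on the topological side entirely by routing \emph{everything} through $\calb(\Gamma)$: prove $H^*(\calc, M)\cong H^*(\calb(\Gamma), N)$ purely categorically (Shapiro or the groupoid-completion argument above), prove $H^*(|\calc|, \widehat M\circ\iota)\cong H^*(|\calb(\Gamma)|, N)$ by the same categorical device applied to topological local-coefficient cohomology (or by noting that local-coefficient cohomology of a space depends only on its fundamental groupoid together with the $1$-type data — no, that is false in general), and then use the standard identification $H^*(\calb(\Gamma), N)\cong H^*(|\calb(\Gamma)|, N)$ which is just the definition of group cohomology via $B\Gamma$. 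Naturality in $M$ is inherited from naturality of the Shapiro transformation (stated in Subsection \ref{sub:KanShapiro}) and of the equivalence in Lemma \ref{loc-const-factorization}, so it requires no separate argument.

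The main obstacle I anticipate is precisely the comparison $H^*(\calc, -)\cong H^*(|\calc|, -)$ for locally constant coefficients — i.e., establishing that the right Kan extension $R_{J\circ\pi}$ (or $R_\pi$) is exact, or equivalently that the relevant over/under-categories have the right (co)homological triviality. For \emph{constant} coefficients this is the classical statement that functor cohomology of $\calc$ equals $H^*(|\calc|)$; for locally constant coefficients one needs the version with local systems, and the honest input is that $\pi\colon\calc\to\widehat\calc$ is a cohomology isomorphism for locally constant functors, which in turn rests on the fact that each fiber of $|\calc|\to B\Gamma$ (a covering space, namely the cover classified by the trivial subgroup) has the homology of a point in the relevant range — i.e. on an acyclicity/contractibility statement for nerves of comma categories. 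I would either cite this from the literature on functor cohomology (it is presumably in the background material alluded to in the acknowledgements) or prove it directly via the Bousfield–Kan spectral sequence $\limn{s}_{\calc}(H^t(\text{fiber}))\Rightarrow H^{s+t}(|\calc|,\widehat M\circ\iota)$ collapsing to the $t=0$ row. Everything else — the equivalence-of-categories step, the identification of group cohomology with $H^*(B\Gamma,-)$, and the naturality bookkeeping — is routine.
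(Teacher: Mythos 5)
Your central strategy --- identify both $H^*(\calc, M)$ and $H^*(|\calc|, \widehat{M}\circ\iota)$ with $H^*(\Gamma, N)$ for $N = \widehat{M}\circ\iota$ --- contains a genuine error, one you partly sense but do not resolve. Neither side of the claimed isomorphism equals $H^*(\Gamma, N)$ in general. The nerve $|\calb(\Gamma)|$ is a $K(\Gamma,1)$, but $|\calc|$ is merely a connected space with $\pi_1\cong\Gamma$; its universal cover $\widetilde{|\calc|}$ can carry nontrivial cohomology, and that cohomology appears on both sides of the proposition but is invisible to $H^*(\Gamma,N)$. A minimal counterexample: let $\calc$ be a category with $|\calc|\simeq S^2$ (e.g.\ the poset of faces of a triangulated $2$-sphere) and let $M$ be the constant functor $\Z$. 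Then $\Gamma$ is trivial, so $H^*(\widehat{\calc},\widehat{M}) \cong H^*(\calb(\Gamma),N) \cong \Z$ concentrated in degree $0$, while $H^*(\calc,M)\cong H^*(S^2,\Z)$ has a $\Z$ in degree $2$. The same phenomenon drives Proposition~\ref{Bob's counter example}. Concretely: (a) $R_{J\circ\pi}$ is \emph{not} exact in general --- the relevant over/undercategory of $J\circ\pi$ at the unique object of $\calb(\Gamma)$ has nerve equivalent to $\widetilde{|\calc|}$, which a $\pi_1$-isomorphism does not make contractible; (b) $\pi^*\colon\widehat{\calc}\mod\to\calc\mod$ is not a cohomology isomorphism; and (c) $H^*(|\calc|,\widehat{M}\circ\iota)\not\cong H^*(B\Gamma,N)$ in general --- you correctly flag that local-coefficient cohomology is not a $\pi_1$-invariant, but then the acyclicity of $\widetilde{|\calc|}$ that your argument needs is not among the hypotheses, and typically fails.

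The paper's own proof sidesteps all of this by citing Quillen's result \cite[Prop.~1]{Qu}: for a small connected $\calc$ and any $\Z[\pi_1(|\calc|)]$-module $L$ there is a canonical isomorphism $H^*(|\calc|,L)\cong H^*(\calc, L\circ J\circ\pi)$. This compares functor cohomology of $\calc$ directly with singular cohomology of the space $|\calc|$; the space $B\Gamma$ never enters, which is precisely why no asphericity is needed. Lemma~\ref{loc-const-factorization} rewrites the locally constant $M$ as such a pullback, and the proposition follows. If you want to prove, rather than cite, Quillen's comparison, the correct mechanism is a cofinality argument relating higher limits over $\calc$ to higher limits over the simplex category of its nerve (whose comma categories really do have terminal objects and hence contractible nerves); it is a comparison of resolutions, not a collapse onto group cohomology, and it never touches $H^*(\Gamma,-)$.
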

\begin{proof}
As a consequence of  \cite[Prop 1]{Qu}, Quillen shows that if $\calc$ is a small connected category, and $L$ is a $\Z[\pi_1(|\calc|)]$-module, then there is a canonical isomorphism 
\[H^*(|\calc|, L) \cong H^*(\calc, L'),\]
where $L'\in\calc\mod$ is given by the composite
\[\calc\xrightarrow{\pi}\widehat{\calc}\xrightarrow{J}\calb(\pi_1(|\calc|))\xrightarrow{L}\Ab.\] But by Lemma \ref{loc-const-factorization}, every locally constant $M\in\calc\mod$  is naturally isomorphic to a functor of the form $M\circ\iota\circ J\circ\pi$. Thus, if we set $L = \widehat{M}\circ\iota$, then Quillen's result reads
\[H^*(|\calc|, \widehat{M}\circ\iota)  \cong H^*(\calc, M\circ\iota\circ J\circ\pi) \cong H^*(\calc, M).\]
This proves the claim.
 \end{proof}

\begin{corollary}\label{nerves=>cohomology}
Let $\tau\colon \calc\to\cald$ be a functor between small connected
categories, such that $|\tau|$ is a homotopy equivalence. Then for
any locally constant system to coefficients $M\in\cald\mod$, $\tau$
induces a natural isomorphism
\[H^*(\calc, \tau^*M) \xrightarrow{\cong} H^*(\cald, M).\]
\end{corollary}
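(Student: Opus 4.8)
\textbf{Proof proposal for Corollary \ref{nerves=>cohomology}.}

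The plan is to reduce the statement immediately to Proposition \ref{loc-const-coho}, which expresses functor cohomology with locally constant coefficients in terms of singular cohomology of the nerve with the corresponding $\pi_1$-module coefficients. First I would fix basepoints $c_0 \in \calc$ and $d_0 = \tau(c_0) \in \cald$, form the groupoid completions $\widehat{\calc}$ and $\widehat{\cald}$, and choose the auxiliary morphisms $\phi_c$ and $\phi_d$ needed to build the equivalences $J_\calc \colon \widehat{\calc} \to \calb(\Gamma_\calc)$ and $J_\cald \colon \widehat{\cald} \to \calb(\Gamma_\cald)$ as in Subsection \ref{loc-const-coeff}. Here $\Gamma_\calc = \aut_{\widehat{\calc}}(c_0) \cong \pi_1(|\calc|,c_0)$ and similarly for $\cald$. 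The hypothesis that $|\tau|$ is a homotopy equivalence guarantees that the induced map $\tau_* \colon \pi_1(|\calc|,c_0) \to \pi_1(|\cald|,d_0)$ is an isomorphism.

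Next I would unwind what $\tau^* M$ is. Since $M \in \cald\mod$ is locally constant, Lemma \ref{loc-const-factorization} gives $M \cong N \circ J_\cald \circ \pi_\cald$ for the $\kk[\Gamma_\cald]$-module $N = \widehat{M} \circ \iota_\cald$. Because $\tau$ sends morphisms to morphisms and $M$ inverts all of them, $\tau^* M$ is again locally constant, and one checks that $\widehat{\tau^*M} \cong \widehat{M} \circ \widehat{\tau}$, where $\widehat{\tau} \colon \widehat{\calc} \to \widehat{\cald}$ is the functor induced on groupoid completions by the universal property of localization. Now apply Proposition \ref{loc-const-coho} on both sides:
\[
H^*(\calc, \tau^*M) \cong H^*(|\calc|, \widehat{\tau^*M} \circ \iota_\calc), \qquad
H^*(\cald, M) \cong H^*(|\cald|, \widehat{M} \circ \iota_\cald),
\]
both isomorphisms being natural in the coefficient system. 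So it remains to produce a natural isomorphism between the two topological cohomology groups compatible with the map $\tau$ induces.

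For the topological comparison I would argue as follows. The composite $|\calc| \xrightarrow{|\tau|} |\cald|$, together with the identification $\widehat{\tau^*M}\circ\iota_\calc \cong (\widehat{M}\circ\iota_\cald)\circ(\text{restriction along }\pi_1(|\tau|))$, realizes $\widehat{\tau^*M}\circ\iota_\calc$ as the pullback $|\tau|^*$ of the local system $\widehat{M}\circ\iota_\cald$ on $|\cald|$ along $|\tau|$ — here one uses that $J_\cald\circ\widehat{\tau}$ and $\calb(\tau_*)\circ J_\calc$ are naturally isomorphic functors $\widehat{\calc}\to\calb(\Gamma_\cald)$, which holds because both realize the same homomorphism on fundamental groups up to the chosen path systems (invoking \cite[Prop. 1]{Qu}). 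Since $|\tau|$ is a homotopy equivalence, the induced map on cohomology with coefficients in any local system pulled back along it is an isomorphism: indeed, cohomology with local coefficients is a homotopy functor on the category of spaces-with-local-system, so $|\tau|^* \colon H^*(|\cald|, \widehat{M}\circ\iota_\cald) \to H^*(|\calc|, |\tau|^*(\widehat{M}\circ\iota_\cald))$ is an isomorphism. Naturality in $M$ is inherited from naturality of each ingredient. Chaining the three isomorphisms gives the claim.

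The main obstacle I anticipate is bookkeeping rather than conceptual: one must verify carefully that the various identifications — $\widehat{\tau^*M} \cong \widehat{M}\circ\widehat{\tau}$, the natural isomorphism $J_\cald\circ\widehat{\tau} \cong \calb(\tau_*)\circ J_\calc$, and the matching of $\iota_\calc$, $\iota_\cald$ with the inclusions of the full subgroupoids on the basepoints — are mutually compatible, so that the composite isomorphism is genuinely natural in $M$ and does not secretly depend on the auxiliary choices $\phi_c$, $\phi_d$. Since Proposition \ref{loc-const-coho} already absorbs the dependence on these choices into a canonical isomorphism, the cleanest route is to phrase everything in terms of the statements of Lemma \ref{loc-const-factorization} and Proposition \ref{loc-const-coho} and never to manipulate the $\phi$'s directly after the initial setup.
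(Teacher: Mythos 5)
Your proposal is correct and takes the same route as the paper: the paper gives no explicit proof for this corollary, treating it as an immediate consequence of Proposition \ref{loc-const-coho}, and your argument supplies exactly the expected details (apply that proposition on both sides, identify $\widehat{\tau^*M}\circ\iota_\calc$ with the pullback local system $|\tau|^*(\widehat{M}\circ\iota_\cald)$, and invoke homotopy invariance of cohomology with local coefficients). The bookkeeping concerns you flag are real but, as you note, are already absorbed by the naturality and choice-independence built into Lemma \ref{loc-const-factorization} and Proposition \ref{loc-const-coho}.
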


\subsection{Covering Spaces, and Locally Constant Coefficients}
\label{cov-sp-loc-cosnt-coeff}
 Let $\calc$ be a small connected
category, $c_0\in\calc$ and
$\Gamma=\aut_{\widehat{\calc}}(c_0)\cong\pi_1(|\calc|,c_0)$. Let
$J\colon\calc\to\calb(\Gamma)$ be as before.

For each $\Gamma'\le\Gamma$, let $\cale_\Gamma(\Gamma/\Gamma')$ denote the category with $\Gamma/\Gamma'$ as objects,
and with a unique morphism $\hat{g}\colon a\Gamma'\to ga\Gamma'$ for each $g\in\Gamma$ and $a\Gamma' \in\Gamma/\Gamma'$.
Let $\calc_{\Gamma'}$ denote the category given by the pull back in the diagram
\[\xymatrix{
\calc_{\Gamma'} \ar[rr] \ar[d]_{\pi}  && \cale_\Gamma(\Gamma/\Gamma') \ar[d]^{\rho_{\Gamma'}}\\
\calc   \ar[rr]_{J} && \calb{\Gamma}
} \]
Thus, $\Obj(\calc_{\Gamma'}) = \Obj(\calc)\times \Gamma/\Gamma'$,
while morphisms $(c, a\Gamma')\to (c',a'\Gamma')$ are morphisms
$\varphi\colon c\to c'$, such that $a'\Gamma' = J(\varphi)a\Gamma'$.

For each $c\in\calc$ the undercategory $c\downarrow \pi$ has objects
$((d,a\Gamma'),\varphi)$ where $d\in\calc$,
$a\Gamma'\in\Gamma/\Gamma'$, and $\varphi\colon c\to d$ is a
morphism in $\calc$. A morphism
$((d,a\Gamma'),\varphi)\to((d',a'\Gamma'),\varphi')$ in
$c\downarrow\pi$ is a map $\psi\colon d\to d'$ in $\calc$, such that
$\psi\varphi = \varphi'$, and  $a'\Gamma' = J(\psi)a\Gamma'$.

Let $((d,a\Gamma'),\varphi)$ be an arbitrary object in
$c\downarrow\pi$. Then there is a unique morphism
$((c,a\Gamma'),1_c)\to ((d,J(\varphi)a\Gamma'),\varphi)$ which is induced by $\varphi$.
Thus every component of $c\downarrow\pi$ has an initial object and
is therefore contractible. Notice that there is an obvious 1--1
correspondence between components of $c\downarrow\pi$ and
$\Gamma/\Gamma'$, and that every morphism $c\to e$ in $\calc$
induces a homotopy equivalence $e\downarrow\pi\to c\downarrow\pi$.
Thus the hypothesis of Quillen's theorem B are satisfied, and
$|\calc_{\Gamma'}|\to|\calc|$ is a covering space up to homotopy,
with fibre over $c$ given by $|c\downarrow\pi|\simeq
\Gamma/\Gamma'$.

\begin{lemma}\label{categorical covering}
Let $\tau\colon\cald\to\calc$ be a functor between small connected
categories such that the induced map $|\tau|\colon|\cald|\to|\calc|$
is a covering space up to homotopy (i.e., has a homotopically
discrete homotopy fibre), and let $\Gamma'=\pi_1(|\cald|, d_0)$ for
some $d_0\in\cald$. Then the following hold: \begin{enumerate}
\item[(i)]There is a functor
$\widehat{\tau}\colon\cald\to\calc_{\Gamma'}$, lifting $\tau$ (i.e.,
$\pi\widehat{\tau}=\tau$), which induces a homotopy equivalence on
nerves.
\item[(ii)] For any locally constant system of coefficients
$M\in\calc\mod$, there is a natural isomorphism
$R_\tau(\tau^*M)\cong R_\pi(\pi^*M)$.
\end{enumerate}
\end{lemma}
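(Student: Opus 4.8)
The plan is to exploit the universal property of the pullback defining $\calc_{\Gamma'}$ together with the classification of covering spaces by $\pi_1$-sets. First I would prove (i). Since $|\tau|\colon|\cald|\to|\calc|$ is a covering space up to homotopy, the fibre over the basepoint is a discrete $\pi_1(|\calc|)$-set, and restricting to the path component containing $d_0$ identifies it (up to homotopy) with $\Gamma/\Gamma'$, where $\Gamma = \pi_1(|\calc|,c_0)$ and $\Gamma' = \pi_1(|\cald|,d_0)$ is viewed as a subgroup of $\Gamma$ via $|\tau|$. Composing $\tau$ with $J\colon\calc\to\calb(\Gamma)$ and using that $|\tau|$ induces (on the component of $d_0$) the inclusion $\Gamma'\hookrightarrow\Gamma$ on fundamental groups, I obtain a functor $\cald\to\cale_\Gamma(\Gamma/\Gamma')$ compatible with the projections to $\calb(\Gamma)$ — concretely, send an object $d$ to the coset recording where the chosen path from $d_0$ to $d$ lands, and a morphism to the group element it represents in $\Gamma$ acting on that coset. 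By the universal property of the pullback square defining $\calc_{\Gamma'}$, the pair $(\tau, \cald\to\cale_\Gamma(\Gamma/\Gamma'))$ assembles to a unique functor $\widehat\tau\colon\cald\to\calc_{\Gamma'}$ with $\pi\widehat\tau = \tau$. To see that $|\widehat\tau|$ is a homotopy equivalence, I note that both $|\cald|$ and $|\calc_{\Gamma'}|$ sit over $|\calc|$ as covering spaces up to homotopy with the same fibre $\Gamma/\Gamma'$ (the latter by the discussion preceding the lemma, via Quillen's Theorem B), and $|\widehat\tau|$ is a map of such coverings inducing an isomorphism on fibres over $c_0$ and compatible with the $\pi_1(|\calc|)$-action, hence a homotopy equivalence by the standard covering space argument applied component-wise.

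For (ii), recall that $R_\tau(\tau^*M)(c) = \lim_{c\downarrow\tau} (\tau^*M)_\sharp$ and $R_\pi(\pi^*M)(c) = \lim_{c\downarrow\pi} (\pi^*M)_\sharp$. The functor $\widehat\tau$ induces, for each $c\in\calc$, a functor $c\downarrow\tau \to c\downarrow\pi$ over $\calc$, so it suffices to show this functor induces an isomorphism on the relevant limits, naturally in $c$. Since $M$ is locally constant, $(\tau^*M)_\sharp$ and $(\pi^*M)_\sharp$ are locally constant on the respective overcategories, so the limit over each is a product of the common value of $M$ indexed by the set of connected components — and by the analysis of $c\downarrow\pi$ given just before the lemma (every component has an initial object, components are in bijection with $\Gamma/\Gamma'$) together with the corresponding statement for $c\downarrow\tau$ (which I would derive from the fact that $|\tau|$ is a covering up to homotopy with fibre $\Gamma/\Gamma'$, again via Theorem B), both limits are naturally identified with $\prod_{\Gamma/\Gamma'} M(c)$. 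Checking that $\widehat\tau$ realizes the identity bijection on components, and that the transition maps induced by morphisms $c\to c'$ agree on both sides, gives the natural isomorphism $R_\tau(\tau^*M)\cong R_\pi(\pi^*M)$.

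The main obstacle will be part (ii), and specifically making precise the claim that $c\downarrow\tau$ has the same component structure as $c\downarrow\pi$ — i.e., $\pi_0(c\downarrow\tau)\cong\Gamma/\Gamma'$ naturally in $c$ — when $\tau$ is only assumed to be a homotopy covering rather than a literal pullback. Here I would either invoke (i) to replace $\cald$ by $\calc_{\Gamma'}$ up to the homotopy equivalence $\widehat\tau$ (reducing (ii) for $\tau$ to the already-understood case $\tau=\pi$, provided one knows $\widehat\tau$ induces equivalences on all overcategories $c\downarrow(-)$, which follows from $|\widehat\tau|$ being a homotopy equivalence compatible with the projections), or argue directly with Quillen's Theorem B to compute $|c\downarrow\tau|\simeq$ (homotopy fibre of $|\tau|$ over $c$) $\simeq\Gamma/\Gamma'$. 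The first route is cleaner and is what I would write up; the bookkeeping lies in verifying that all the identifications are natural in $c$ so that they assemble into a natural isomorphism of functors on $\calc$.
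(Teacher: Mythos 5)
Your proposal is correct and takes essentially the same approach as the paper: the paper likewise produces $\widehat{\tau}$ via the pullback universal property (factoring $J\circ\tau$ through $\calb(\Gamma')\subset\cale_\Gamma(\Gamma/\Gamma')$) and checks that $|\widehat{\tau}|$ is a homotopy equivalence by a covering-space argument, and for (ii) it takes exactly your first route, invoking Corollary \ref{nerves=>cohomology} on the natural transformation $(-\da\tau)\to(-\da\pi)$ induced by $\widehat{\tau}$ once $\pi^*M$ is observed to be locally constant. The subtlety you flag---that one must know $\widehat{\tau}$ induces homotopy equivalences on the undercategories $c\da\tau\to c\da\pi$, not merely on the total nerves---is also passed over quickly in the paper, so your caution there is well placed rather than a defect of your sketch.
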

\begin{proof}
Choose some $d_0\in\cald$, and let $c_0=\tau(d_0)$.  Let $\Gamma = \pi_1(|\calc|,c_0)$, and $\Gamma'=\pi_1(|\cald|,d_0)$.
Then, constructing a projection functor $J\colon\calc\to \calb(\Gamma)$ as before,
one gets $J\circ\tau\colon\cald\to \calb(\Gamma)$, whose image is $\Gamma'$, and thus a
corresponding projection $J'\colon\cald\to\calb(\Gamma')$. The diagram
\[\xymatrix{
\cald \ar[rr]^{\tau}\ar[d]_{J'} && \calc \ar[d]^{J} && \calc_{\Gamma'}\ar[ll]_{\pi}\ar[d]\\
\calb(\Gamma')  \ar[rr]^{inc} && \calb(\Gamma)
&& \ar[ll]_{\rho_{\Gamma'}}\cale_\Gamma(\Gamma/\Gamma')
}\] 
commutes, where the right hand side square is a pull back square. The inclusion of
$\calb(\Gamma')$ in $\cale_\Gamma(\Gamma/\Gamma')$ as the full
subcategory on the object $1\Gamma'$ induces a homotopy equivalence
on nerves, and  a functor
$\widehat{\tau}\colon\cald\to\calc_{\Gamma'}$. A simple diagram
chase now shows that $|\widehat{\tau}|$ is a homotopy equivalence,
and proves (i).

To prove (ii), notice that since $\pi\widehat{\tau} = \tau$, the
functor $\widehat{\tau}$ induces a natural transformation of
functors $\calc\to\cat$
\[\widehat{\tau}_{(-)}\colon (-\da\tau)\to (-\da\pi).\]
Thus for every $c\in\calc$, and $M\in\calc\mod$, one has a map
induced by $\widehat{\tau}$,
\[R_\pi(\pi^*M)\defeq\lim_{c\da\pi}(\pi^*M)_\sharp\to\lim_{c\da\tau}(\widehat{\tau}^*\pi^*M)_\sharp = \lim_{c\da\tau}(\tau^* M)_\sharp\defeq R_\tau(\tau^*M).\]
But since $\pi^*M$ is also locally constant, and since
$\widehat{\tau}$ induces a homotopy equivalence on nerves, it
follows from Corollary \ref{nerves=>cohomology} that this map is an
isomorphism, thus proving (ii).
\end{proof}

Lemma  \ref{categorical covering} allows us to compute the values of a right Kan extension of a system of
coefficients $M\in\cald\mod$ along a functor $\tau\colon\cald\to \calc$ satisfying its hypothesis.

The following lemma allows a calculation of the right Kan extension explicitly in a specialized case which will be of interest in our discussion.

\begin{lemma}\label{RKE along coverings}
Let $\tau\colon\cald\to\calc$ be a functor between small connected categories such that for each $c\in\calc$,
every connected component in the under category $c\downarrow\tau$ has an initial object. Then for  any $M\in\cald\mod$,
the value of the right Kan extension $R_\tau(M)\in\calc\mod$ at $c$ is the direct product of the values of $M_\sharp$
on these initial objects. In particular $R_\tau$ is an exact functor.
\end{lemma}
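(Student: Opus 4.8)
The plan is to evaluate the limit defining $R_\tau(M)$ by splitting the indexing category into its connected components. By the formula for the right Kan extension along $\tau$, one has $R_\tau(M)(c) = \lim_{c\downarrow\tau} M_\sharp$, where $M_\sharp$ denotes the composite of $M$ with the forgetful functor $c\downarrow\tau\to\cald$. First I would write $c\downarrow\tau$ as the disjoint union of its connected components $\{\calz_j\}_j$; since a limit over a coproduct of categories is the product of the limits over the individual pieces, this gives
\[ R_\tau(M)(c) \;\cong\; \prod_j\;\lim_{\calz_j}\bigl(M_\sharp|_{\calz_j}\bigr). \]

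Next I would invoke the hypothesis. By assumption each $\calz_j$ has an initial object $(d_j,\varphi_j)$, with $d_j\in\cald$ and $\varphi_j\colon c\to\tau(d_j)$. A limit of any functor over a category possessing an initial object is canonically the value of that functor at the initial object (a cone is determined by, and may be freely prescribed by, its leg at the initial vertex). Hence $\lim_{\calz_j}(M_\sharp|_{\calz_j})\cong M_\sharp(d_j,\varphi_j)=M(d_j)$, and substituting into the display above yields $R_\tau(M)(c)\cong\prod_j M(d_j)$, which is the asserted description.

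For the exactness statement I would argue objectwise. Exactness of a sequence in $\cald\mod$ or in $\calc\mod$ is detected at each object, so it suffices to check that for every $c\in\calc$ the functor $\cald\mod\to\R\mod$ given by $M\mapsto R_\tau(M)(c)$ is exact. By the computation above this functor is naturally isomorphic to $M\mapsto\prod_j M(d_j)$, namely the product of the evaluation functors $\ev_{d_j}$. Each evaluation functor is exact (exactness in $\cald\mod$ being tested objectwise), and an arbitrary product of exact functors taking values in $\R\mod$ is again exact, because products are exact in $\R\mod$. Therefore $R_\tau$ is exact.

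There is no genuine obstacle in this argument; the one point that needs a little care is the naturality in $M$ of the isomorphism $R_\tau(M)(c)\cong\prod_j M(d_j)$, which is what allows the objectwise computation to identify $R_\tau$ itself up to natural isomorphism. This naturality is automatic, since the isomorphism is assembled entirely from the universal properties of products, of limits, and of initial objects.
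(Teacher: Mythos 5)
Your argument is correct and follows the same route as the paper: decompose $c\downarrow\tau$ into connected components, use the initial object in each component to evaluate the limit, and read off exactness from the resulting product description. The paper states these steps more tersely, but you have simply made explicit the same implicit reasoning.
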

\begin{proof}
For each $c\in\calc$ one has
\[R_\tau(M)(c) \defeq \lim_{c\downarrow\tau}M_\sharp\cong \prod_{[(d,\alpha)]\in[c\downarrow\tau]}M(d_0),\]
where the product runs over all connected components of $c\downarrow\tau$, and $(d_0,\alpha_0)$ is initial in the
component of $(d,\alpha)$. This follows at once from the fact that a limit over a category with an initial object
is given by the value of the functor on that object. This description also makes it clear that $R_\tau$ is an exact
functor, and so the proof is complete.
\end{proof}

\begin{proposition}\label{RKE along hty covering}
Let $\tau\colon\cald\to\calc$ be a functor between small connected categories such that the induced map
$|\tau|\colon|\cald|\to|\calc|$ is a covering space up to homotopy. 
Fix an object $d_0$ in $\cald$, let $\Gamma' = \pi_1(|\cald|, d_0)$, and let $\pi\colon\calc_{\Gamma'}\to \calc$ be the projection.

For any locally constant system of coefficients $M\in\calc\mod$, there is a natural isomorphism 
\[H^*(\calc, R_\pi(\pi^*M)\in\calc\mod) \cong H^*(\cald, \tau^*M). \]
Furthermore, if the index of the covering is finite, then there is a natural transformation $T\colon R_\pi(\pi^*M)\in\calc\mod \to M$,
such that the composite
\[H^*(\calc, M)\xrightarrow{\tau^*} H^*(\cald, \tau^*(M))\cong H^*(\calc,R_\pi(\pi^*M)\in\calc\mod)\xrightarrow{T_*} H^*(\calc, M)\]
is multiplication by the index.
\end{proposition}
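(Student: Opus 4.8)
The plan is to push everything onto the model covering $\pi\colon\calc_{\Gamma'}\to\calc$ of Subsection~\ref{cov-sp-loc-cosnt-coeff}, whose right Kan extension is completely explicit, and then to quote the Shapiro Lemma together with the homotopy invariance of Corollary~\ref{nerves=>cohomology}. First I would record the shape of $R_\pi(\pi^*M)$: by the discussion preceding Lemma~\ref{categorical covering}, every connected component of the undercategory $c\downarrow\pi$ has the initial object $((c,a\Gamma'),1_c)$, and these components biject with $\Gamma/\Gamma'$; so Lemma~\ref{RKE along coverings} shows that $R_\pi$ is exact and that, for each $M$,
\[R_\pi(\pi^*M)(c)\;=\;\prod_{a\Gamma'\in\Gamma/\Gamma'}M(c),\]
the factor indexed by $a\Gamma'$ being the value of $(\pi^*M)_\sharp$ at $((c,a\Gamma'),1_c)$. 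Tracing through the construction, a morphism $\varphi\colon c\to c'$ of $\calc$ then acts by $(y_{a\Gamma'})\mapsto\bigl(M(\varphi)(y_{J(\varphi)^{-1}b\Gamma'})\bigr)_{b\Gamma'}$, i.e.\ it applies $M(\varphi)$ in each factor and permutes the factors according to the $\Gamma$-action on $\Gamma/\Gamma'$.

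For the natural isomorphism: since $R_\pi$ is exact, the Shapiro Lemma~\ref{lem:Shapiro} gives a natural isomorphism $\Sh\colon H^*(\calc,R_\pi(\pi^*M))\xrightarrow{\cong}H^*(\calc_{\Gamma'},\pi^*M)$. The functor $\pi^*M$ is locally constant because $M$ is, and the lift $\widehat{\tau}\colon\cald\to\calc_{\Gamma'}$ of Lemma~\ref{categorical covering}(i) induces a homotopy equivalence on nerves, so by Corollary~\ref{nerves=>cohomology} the induced map $\widehat{\tau}^*\colon H^*(\calc_{\Gamma'},\pi^*M)\to H^*(\cald,\widehat{\tau}^*\pi^*M)=H^*(\cald,\tau^*M)$ is an isomorphism (using $\pi\circ\widehat{\tau}=\tau$). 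Setting $\Phi\defeq\Sh^{-1}\circ(\widehat{\tau}^*)^{-1}\colon H^*(\cald,\tau^*M)\xrightarrow{\cong}H^*(\calc,R_\pi(\pi^*M))$ yields the asserted isomorphism, naturality in $M$ being inherited from that of $\Sh$ and of Corollary~\ref{nerves=>cohomology}.

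Now assume the index $n\defeq[\Gamma:\Gamma']$ is finite and define $T\colon R_\pi(\pi^*M)\to M$ at each object $c$ to be the fold (sum) map $\prod_{\Gamma/\Gamma'}M(c)\to M(c)$ under the identification above; finiteness of $n$ is exactly what makes this defined on the whole product. It is a natural transformation because the fold map is invariant under permutations of the summands and commutes with each $M(\varphi)$, hence is compatible with the action of morphisms described above; it is also visibly natural in $M$. To evaluate the composite of the proposition, write it as $T_*\circ\Phi\circ\tau^*$. From $\tau=\pi\circ\widehat{\tau}$ we get $\tau^*=\widehat{\tau}^*\circ\pi^*$ on cohomology, where $\pi^*\colon H^*(\calc,M)\to H^*(\calc_{\Gamma'},\pi^*M)$ is the map induced by $\pi$; hence $\Phi\circ\tau^*=\Sh^{-1}\circ(\widehat{\tau}^*)^{-1}\circ\widehat{\tau}^*\circ\pi^*=\Sh^{-1}\circ\pi^*$. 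Let $\eta_M\colon M\to R_\pi(\pi^*M)$ be the unit of the adjunction $\pi^*\dashv R_\pi$; by the standard compatibility of the Shapiro map with restriction, $\Sh\circ(\eta_M)_*=\pi^*$, so $\Phi\circ\tau^*=(\eta_M)_*$. Under the explicit description, $\eta_M$ carries $m\in M(c)$ to the diagonal family $(m,\dots,m)$ in $\prod_{\Gamma/\Gamma'}M(c)$ (its value at each initial object $((c,a\Gamma'),1_c)$ is $m$), so $T\circ\eta_M=n\cdot\Id_M$. Therefore the composite equals $T_*\circ(\eta_M)_*=(T\circ\eta_M)_*=n\cdot\Id_{H^*(\calc,M)}$, i.e.\ multiplication by the index.

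The genuinely non-formal input — and the main obstacle — is the last paragraph's identity $\Sh\circ(\eta_M)_*=\pi^*$ together with the identification of the unit $\eta_M$ with the diagonal inside $\prod_{\Gamma/\Gamma'}M(c)$: both are standard facts about right Kan extensions and the Shapiro construction, but pinning them down cleanly requires a short argument with injective resolutions and the adjunction isomorphism $\rho$ of Subsection~\ref{sub:KanShapiro}. The only other point needing care is the bookkeeping verifying that the fold map $T$ respects the permutation-of-factors action of morphisms on $R_\pi(\pi^*M)$, which is mechanical.
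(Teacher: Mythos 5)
Your proposal is correct and follows essentially the same route as the paper: the first isomorphism via Lemma~\ref{RKE along coverings}, Shapiro, and Corollary~\ref{nerves=>cohomology}, and the normalization via the explicit product description of $R_\pi(\pi^*M)$ together with the fold map $T$. The only cosmetic difference is at the last step, where you invoke the unit $\eta_M$ and the compatibility $\Sh\circ(\eta_M)_*=\pi^*$ as a standard fact, while the paper verifies the equivalent claim directly by computing that the $\hom$-level map $\hom_{\calc\mod}(-,M)\to\hom_{\calc\mod}(-,R_\pi(\pi^*M))$ induced by $\pi$ is the diagonal.
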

\begin{proof}
Since $M$ is locally constant
$H^*(\cald, \tau^*M)\cong H^*(\calc_{\Gamma'}, \pi^*M)$ by Corollary \ref{nerves=>cohomology}.
By Lemma  \ref{RKE along coverings} $R_\pi$ is exact,
and hence we have an isomorphism
$H^*(\calc, R_\pi(\pi^*M)) \cong H^*(\calc_{\Gamma'}, \pi^*M)$ by the Shapiro lemma. This proves the first statement.

To prove the second statement, consider the functor $R_\pi(\pi^*M)$. For each coset $a\Gamma'\in\Gamma/\Gamma'$, the
object $((c,a\Gamma'),1_c)\in c\downarrow\pi$ is initial in its own path component, and $(\pi^*(M))_\sharp((c,a\Gamma'),1_c) = M(c)$.
Thus, by Lemma \ref{RKE along coverings}, for each $c\in\calc$,
\[R_\pi(\pi^*M)(c) \defeq \lim_{c\downarrow\pi}\pi^*M_\sharp \cong \prod_{\Gamma/\Gamma'}M(c).\]
Assuming $\Gamma/\Gamma'$ is a finite set, define a natural
transformation of functors  $T\colon R_\pi(\pi^*M)\to M$ in $\calc\mod$ by
\begin{equation}\label{T}
T_c(\{x_{a\Gamma'}\}_{a\Gamma'\in\Gamma/\Gamma'}) =
\sum_{a\Gamma'\in\Gamma/\Gamma'}x_{a\Gamma'}.\end{equation}
Naturality of $T$ is clear, since $M$ is additive.

It remains to show that $T_*\circ\tau^*$ is multiplication by $|\Gamma\colon\Gamma'|$. To do that it suffices to show that the natural transformation induced by $\pi$
\[\hom_{\calc\mod}(-,M)\xrightarrow{\pi^*}\hom_{\calc_{\Gamma'}}(\pi^*(-), \pi^*M)\cong\hom_{\calc\mod}(-,R_\pi(\pi^*M)),\]
is the diagonal map. But for $U\in\calc\mod$, a natural transformation $\eta\colon U\to M$ takes an object $c\in\calc$ to a morphism  $\eta_c\colon U(c)\to M(c)$. Composing with $\pi$ we see that the composition transformation takes each object $(c,a\Gamma')$ in $\calc_{\Gamma'}$ to the morphism $\eta_c$ as above.
 This shows that $\pi^*$ is precisely the diagonal map and the proof is complete.
\end{proof}

The next corollary follows at once from the definitions.
\begin{corollary}\label{geometric}
Let $M\in\calc\mod$ be a locally constant system of coefficients, and let $\Gamma'\le\Gamma = \pi_1(|\calc|,c_0)$ be a subgroup of finite index. Then the map
\[H^*(\calc_{\Gamma'},\pi^*M)\cong H^*(\calc,R_\pi(\pi^*M))\xrightarrow{T_*}H^*(\calc,M)\]
coincides with the geometric transfer map associated to the covering space $|\calc_{\Gamma'}|\to|\calc|$.
\end{corollary}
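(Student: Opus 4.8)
The plan is to transport the entire diagram to topology via Proposition \ref{loc-const-coho} and then to recognise both maps as the classical ``sum over the sheets'' of the covering. Let $p=|\pi|\colon|\calc_{\Gamma'}|\to|\calc|$ denote the finite covering up to homotopy associated to $\Gamma'\le\Gamma$, and let $\mathcal{M}=\widehat{M}\circ\iota$ be the local system on $|\calc|$ corresponding to the locally constant functor $M$ by Lemma \ref{loc-const-factorization} and Proposition \ref{loc-const-coho}. Using Proposition \ref{loc-const-coho} and the naturality of that identification we obtain $H^*(\calc,M)\cong H^*(|\calc|,\mathcal{M})$ and $H^*(\calc_{\Gamma'},\pi^*M)\cong H^*(|\calc_{\Gamma'}|,p^*\mathcal{M})$. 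Since $R_\pi$ is exact by Lemma \ref{RKE along coverings}, the functor $R_\pi(\pi^*M)$ is again locally constant, the Shapiro isomorphism of Lemma \ref{lem:Shapiro} identifies $H^*(\calc,R_\pi(\pi^*M))$ with $H^*(\calc_{\Gamma'},\pi^*M)$, and — once more via Proposition \ref{loc-const-coho} — $R_\pi(\pi^*M)$ corresponds to the direct image local system $p_*p^*\mathcal{M}$, since the product-over-the-fibre formula $R_\pi(\pi^*M)(c)\cong\prod_{\Gamma/\Gamma'}M(c)$ of Proposition \ref{RKE along hty covering} is precisely the fibrewise description of $p_*p^*\mathcal{M}$.

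Next I would recall the classical description of the transfer of the finite covering $p$. At the cochain level it sends a cochain $\phi$ on $|\calc_{\Gamma'}|$ with coefficients in $p^*\mathcal{M}$ to the cochain on $|\calc|$ whose value on a simplex $\sigma$ is $\sum_i\phi(\widetilde{\sigma}_i)$, the sum over the finitely many lifts $\widetilde{\sigma}_i$ of $\sigma$ (all summands lying in the single group $\mathcal{M}_{\sigma(v_0)}$, as $p(\widetilde{\sigma}_i(v_0))=\sigma(v_0)$). Equivalently — and this is the form we want — the transfer factors as the natural isomorphism $H^*(|\calc_{\Gamma'}|,p^*\mathcal{M})\cong H^*(|\calc|,p_*p^*\mathcal{M})$, induced by the cochain-level identification $C^*(|\calc_{\Gamma'}|,p^*\mathcal{M})\cong C^*(|\calc|,p_*p^*\mathcal{M})$, followed by the map on cohomology induced by the fibrewise summation $\operatorname{tr}\colon p_*p^*\mathcal{M}\to\mathcal{M}$.

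It then remains to match the two constructions. On each fibre the summation $\operatorname{tr}$ is the map $\{x_{a\Gamma'}\}_{a\Gamma'\in\Gamma/\Gamma'}\mapsto\sum_{a\Gamma'}x_{a\Gamma'}$, which is exactly the natural transformation $T$ of Proposition \ref{RKE along hty covering} defined by \eqref{T}; hence $T_*$ is the map on cohomology induced by $\operatorname{tr}$. Together with the reductions above, this identifies the composite in the statement with the geometric transfer, provided one knows that the categorical Shapiro isomorphism $H^*(\calc,R_\pi(\pi^*M))\cong H^*(\calc_{\Gamma'},\pi^*M)$ is carried, under Proposition \ref{loc-const-coho}, to the topological isomorphism $H^*(|\calc|,p_*p^*\mathcal{M})\cong H^*(|\calc_{\Gamma'}|,p^*\mathcal{M})$. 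This compatibility is the only real content of the argument, and is the reason the statement becomes ``immediate'' only after all the earlier machinery is in place: one must verify that the adjunction isomorphism $\hom_{\calc_{\Gamma'}}(\pi^*(-),\pi^*M)\cong\hom_{\calc\mod}(-,R_\pi(\pi^*M))$ underlying the Shapiro construction corresponds, under the equivalences of Proposition \ref{loc-const-coho} and Lemma \ref{categorical covering}(ii), to the topological one realised by $p$. I expect this to be a routine but slightly tedious diagram chase through naturality statements already established — notably the naturality in $M$ of Proposition \ref{loc-const-coho} and of Lemma \ref{categorical covering}(ii) — after which no further argument is needed.
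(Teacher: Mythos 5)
The paper offers no proof of this corollary; it simply asserts that the statement ``follows at once from the definitions,'' so there is no ``paper's approach'' to compare against in detail. Your argument is a correct unpacking of that claim, and it proceeds in the natural direction: translate everything to local-system cohomology on nerves via Proposition~\ref{loc-const-coho}, identify $R_\pi(\pi^*M)$ with the direct image $p_*p^*\mathcal{M}$ by comparing fibrewise descriptions, recall that the classical covering-space transfer factors as the Shapiro-type isomorphism followed by the fibrewise summation $p_*p^*\mathcal{M}\to\mathcal{M}$, and observe that the natural transformation $T$ of \eqref{T} is precisely that summation. You correctly locate the one genuinely nontrivial verification, namely that the adjunction-induced Shapiro isomorphism $H^*(\calc,R_\pi(\pi^*M))\cong H^*(\calc_{\Gamma'},\pi^*M)$ corresponds under Proposition~\ref{loc-const-coho} to the topological isomorphism $H^*(|\calc|,p_*p^*\mathcal{M})\cong H^*(|\calc_{\Gamma'}|,p^*\mathcal{M})$; you leave this as a diagram chase but you have set it up so that it is a finite check. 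Two small remarks: first, the appeal to Lemma~\ref{categorical covering}(ii) is redundant here since you are already taking $\tau=\pi$, so that lemma is a tautology; second, a slightly more self-contained route would be to carry out the comparison directly at the cochain level, using the bar-type projective resolution of $\R_\calc$ coming from the nerve of $\calc$ (so that $\hom_{\calc\mod}(P_\bullet,-)$ literally computes simplicial cochains of $|\calc|$ with local coefficients), after which the agreement of the two ``sum over sheets'' maps is immediate without invoking the $p_*p^*$ formalism. Either way, the content is the same and your outline is sound.
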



\section{$p$-Local Finite Group Cohomology}
\label{Cohomology with coefficients} The cohomology of a $p$-local
finite group $\SFL$ with coefficients $M\in\call\mod$ is the main
object of study in this paper. In ordinary finite group cohomology
the  transfer map with respect to a subgroup is one of the most
useful computational and theoretical tools available to us, and
its properties are the key to proving the stable element theorem
in cohomology, among other things. We start our discussion of the
$p$-local analog with an example showing
that in $p$-local finite group cohomology one cannot associate
a transfer map to an arbitrary $p$-local subgroup inclusion,
even if one restricts to locally constant coefficients.

\subsection{An Example}
As already mentioned, in \cite{BLO2} it is shown that if $\SFL$ is a
$p$-local finite group then the mod $p$ cohomology of $|\call|$ is
given by the $\calf$-stable elements in $H^*(BS,\Fp)$. The next
statement, due to Bob Oliver, provides an abundance of examples that
show that with our definition of local coefficients one cannot
expect a stable elements theorem to hold in full generality.

\begin{proposition}\label{Bob's counter example}
Let $\SFL$ be a $p$-local finite group, (with $S$ nontrivial) and
let $\Gamma=\pi_1(|\call|)$. Fix a compatible system of inclusions,
and assume that the composite functor
 \[ \calb(j) \colon \calb(S) \xrightarrow{\iota} \call \xrightarrow{J} \calb(\Gamma) \]
is faithful, where $\iota$ is the inclusion $\calb(S)
\xrightarrow{\calb(\delta_S)} \calb(\aut_{\call}(S)) \subseteq \call$, and
$J$ is the functor defined in Section \ref{sub:InclRes}. Assume
further that the universal cover $\widetilde{|\call|}$ is not mod
$p$ acyclic. Let $M$ be the group ring $\Fp[\Gamma]$ regarded as a
system of coefficients in $\call\mod$ via $J$ and the obvious
$\Gamma$  action. Then $\widetilde{H}^*(S,\iota^*M)=0$, but
$\widetilde{H}^*(\call, M )\neq 0$.
\end{proposition}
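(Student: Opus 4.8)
The plan is to compute the two cohomology groups separately, using the identification of cohomology with locally constant coefficients with the cohomology of the nerve (Proposition \ref{loc-const-coho}), and the fact that $M = \Fp[\Gamma]$ is an induced module.

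\textbf{Computing $\widetilde{H}^*(S,\iota^*M)$.} First I would observe that $\iota^*M$ is the $\Fp[S]$-module obtained from $\Fp[\Gamma]$ by restricting along the homomorphism $j\colon S\to\Gamma$ defined in Section \ref{sub:InclRes}. Since $\calb(j)$ is assumed faithful, the induced homomorphism $j\colon S\to\Gamma$ is injective, so as an $\Fp[S]$-module $\Fp[\Gamma]$ is free: it decomposes as $\bigoplus_{\Gamma/j(S)}\Fp[S]$. Therefore $H^*(S,\iota^*M)\cong H^*(S,\Fp[S])^{\oplus|\Gamma/j(S)|}$, and since $S$ is a finite group, $H^*(S,\Fp[S]) = H^*(S,\Fp[S])$ is concentrated in degree $0$ (free modules over a finite group are cohomologically trivial, being both projective and, since $|S|$ is a unit times a nilpotent... more precisely $\Fp[S]$ is coinduced from the trivial subgroup, so $H^*(S,\Fp[S])\cong H^*(1,\Fp) = \Fp$ concentrated in degree $0$ by Shapiro). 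Hence $\widetilde{H}^*(S,\iota^*M)=0$.

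\textbf{Computing $\widetilde{H}^*(\call,M)$.} Here I would use Proposition \ref{loc-const-coho} to identify $H^*(\call, M)\cong H^*(|\call|, \widehat M\circ\iota)$, where $\widehat M\circ\iota$ is the $\Z[\pi_1(|\call|)]$-module $\Fp[\Gamma]$ with $\Gamma$ acting by left translation. Letting $\widetilde{|\call|}$ denote the universal cover of $|\call|$, the module $\Fp[\Gamma]$ is the $\Gamma$-module of $\Fp$-chains (in degree $0$) coinduced from the trivial group, so $H^*(|\call|,\Fp[\Gamma])\cong H^*(\widetilde{|\call|},\Fp)$ by the Shapiro lemma for the covering $\widetilde{|\call|}\to|\call|$ (equivalently, $H^*(|\call|,\Fp[\Gamma]) = H^*_\Gamma(\widetilde{|\call|};\Fp[\Gamma])\cong H^*(\widetilde{|\call|};\Fp)$). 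By hypothesis $\widetilde{|\call|}$ is not mod $p$ acyclic, so $\widetilde{H}^*(\widetilde{|\call|};\Fp)\neq 0$, whence $\widetilde{H}^*(\call,M)\neq 0$.

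\textbf{Main obstacle.} The substantive points are (a) making precise that $\iota^*M$ is genuinely a \emph{free} $\Fp[S]$-module — this is exactly where the faithfulness hypothesis on $\calb(j)$ is used, to guarantee $j$ is injective so that $\Gamma$ is a free $j(S)$-set; and (b) correctly identifying $H^*(|\call|,\Fp[\Gamma])$ with the reduced cohomology of the universal cover rather than of some intermediate cover — this follows because $\Fp[\Gamma]$, as a $\Z[\Gamma]$-module, is induced (equivalently coinduced, as $\Gamma$ acts freely on a basis in degree zero) up from the trivial subgroup, so the relevant covering space in the Shapiro/transfer argument is the universal cover. Neither step is deep, but both require care about which module-theoretic decomposition is being invoked and a clean statement of Shapiro's lemma in the topological setting; I would phrase both uniformly via the version of Shapiro's lemma recorded in Lemma \ref{lem:Shapiro} together with Proposition \ref{loc-const-coho}.
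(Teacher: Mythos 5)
Your proof is correct and follows essentially the same route as the paper: injectivity of $j$ makes $\iota^*M$ a cohomologically trivial $\Fp[S]$-module, and Proposition \ref{loc-const-coho} together with the identification $H^*(|\call|,\Fp[\Gamma])\cong H^*(\widetilde{|\call|},\Fp)$ for the (co)induced module reduces the second claim to the hypothesis on the universal cover. The only stylistic difference is in the first step, where the paper argues ``projective, hence injective since $S$ is finite, hence acyclic'' while you invoke Shapiro with the trivial subgroup; these amount to the same computation.
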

\begin{proof}
Since $S \xrightarrow{j} \Gamma$ is monic, $\iota^*M$ is a projective
$\Fp[S]$ module, and since $S$ is finite, it is also injective,
and thus acyclic. On the other hand, since $M$ is locally constant,
\begin{multline*}
H^*(\call, M) = H^*(|\call|,\Fp[\Gamma]) \defeq
H^*(\hom_{\Fp[\Gamma]}(C_*(\widetilde{|\call|}), \Fp[\Gamma])) \cong\\
H^*(\hom_{\Fp}(C_*(\widetilde{|\call|}), \Fp)) \cong
H^*(\widetilde{|\call|},\Fp),\end{multline*} where the first
equality follows from Proposition  \ref{loc-const-coho}, and the
right hand side is not mod $p$ acyclic by assumption.
\end{proof}

One large family of examples which satisfy the conditions of
Proposition \ref{Bob's counter example} is the general linear groups
$GL_n(\FF_{p^k})$. The following argument is a sketch of a proof.
Since the same argument applies for all $k$, we will replace
$\FF_{p^k}$ by $\FF$, while $p$ and $k$ are assumed fixed. The
radical subgroups in these groups are all $p$-centric, and their
centers are given by the diagonally embedded $\FF^*\cong\Z/(p^k-1)$.
Hence the centric radical linking systems of $GL_n(\FF)$ and the
associated projective group $PGL_n(\FF)$ have the same objects, and
$|\call^{cr}(GL_n(\FF))|$ is a covering space of
$|\call^{cr}(PGL_n(\FF))|$ with fibre $\FF^*$, and thus they share
the same universal cover. But $\call^{cr}(PGL_n(\FF))$ coincides
with the centric radical transporter system of $PGL_n(\FF)$ (the
category with the same objects, and where morphism sets are the
transporters $N_G(P,Q)$), and thus admits an obvious map
\[|\call^{cr}(PGL_n(\FF))|\to BPGL_n(\FF),\]
with fibre given by the nerve of the poset of centric radical
subgroups in $PGL_n(\calf)$ \cite{BLO1}. This nerve is known as the
Tits building, and has the homotopy type of a wedge of spheres.
Thus it is the universal cover of $|\call^{cr}(PGL_n)|$, while
at the same time it is  not acyclic. This was first observed by
Grodal. To finish the argument, we recall from \cite{BCGLO1} that
for any $p$-local finite group, the centric radical linking system
and the centric linking system have homotopy equivalent nerves.

\subsection{Transfer Maps for Locally Constant Coefficients}
\label{Loc-Const-coeff} We now show that if $\SFL$ is a $p$-local
finite group and $(S_0,\calf_0,\call_0)$ is a subgroup of $p$-power
index or index prime to $p$, then one can define an algebraic
transfer map, which coincides with the transfer map associated to
the finite covering $|\call_0|\to|\call|$ (See Thm.
\ref{Classification-summary}(iv)).

First we must discuss restrictions of coefficient systems.
We defined a system of coefficients on a
$p$-local finite group to be a functor $M\in\call\mod$. When $(S_0,\calf_0,\call_0)$ is a subgroup of $\SFL$ of $p$-power
index, one does not have in general an inclusion functor
$\call_0\to\call$, but rather an inclusion $\call_0^q\to\call^q$. Thus,  in this case, a system of
coefficients on $\SFL$ cannot be directly restricted to
$(S_0,\calf_0,\call_0)$. We will show that for locally constant coefficients, this is in fact an easily solvable problem. We start by observing that the restriction of a locally constant system of coefficients on $\call^q$ to $\call$ does not affect cohomology.

\begin{lemma} \label{H(Lq)=H(L)}
Let $\SFL$ be a $p$-local finite group, and let $\call^q$ be the
associated quasicentric linking system. For every locally constant
system of coefficients $N\in\call^q\mod$, the restriction to $\call$
induces an isomorphism in cohomology $H^*(\call^q, N)\cong
H^*(\call, \iota^*N)$.
\end{lemma}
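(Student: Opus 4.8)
The plan is to identify the restriction homomorphism with the isomorphism furnished by Corollary~\ref{nerves=>cohomology} applied to the inclusion functor $\iota\colon\call\to\call^q$.

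First I would verify the hypotheses of that corollary. Both $\call$ and $\call^q$ are connected: each has $S$ among its objects, and a compatible set of inclusions supplies a morphism $\iota_P\colon P\to S$ for every object $P$, so $|\call|$ and $|\call^q|$ are path-connected. Moreover $\iota^*N$ is locally constant on $\call$ because $N$ is locally constant on $\call^q$ and every morphism of $\call$ is a morphism of $\call^q$ (and in any case the corollary only asks that $N$ be locally constant on the target category). The one substantive input is that $|\iota|\colon|\call|\to|\call^q|$ is a homotopy equivalence. This is precisely the comparison between the centric linking system and the quasicentric linking system established in \cite{BCGLO1}: adjoining the $\calf$-quasicentric subgroups that are not $\calf$-centric does not change the homotopy type of the nerve, and the inclusion realizes the equivalence. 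It is the same kind of statement already invoked in Section~\ref{Cohomology with coefficients}, where $\call^{cr}$ and $\call$ are compared, and it underlies the identification $|\call^q|\simeq|\call|$ used in Theorem~\ref{Classification-summary}(iv).

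Granting this, apply Corollary~\ref{nerves=>cohomology} with $\calc=\call$, $\cald=\call^q$, $\tau=\iota$, and $M=N$. Since $N$ is locally constant, we obtain a natural isomorphism $H^*(\call,\iota^*N)\xrightarrow{\cong}H^*(\call^q,N)$, which is exactly the assertion of the lemma, and by construction it is induced by restriction along $\iota$. The only non-formal ingredient is the homotopy equivalence $|\call|\simeq|\call^q|$ coming from \cite{BCGLO1}; everything else is a direct application of the machinery developed in Section~\ref{homological algebra}. Accordingly, the main (and essentially only) obstacle is correctly invoking that homotopy equivalence, after which the proof is immediate.
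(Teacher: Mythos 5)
Your proof is correct and takes essentially the same route as the paper's, which simply cites Corollary~\ref{nerves=>cohomology} together with the homotopy equivalence $|\call|\simeq|\call^q|$ from \cite{BCGLO1}. You merely spell out the hypothesis-checking (connectedness, local constancy of $\iota^*N$) that the paper leaves implicit.
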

\begin{proof}
This follows directly from Corollary
\ref{nerves=>cohomology} since the inclusion $\call^q \subseteq \call$
induces a homotopy equivalence on nerves.
\end{proof}

Below we will outline a functorial way to extend a locally constant coefficient
system $M$ on $\call$ to a locally constant coefficient system $M^q$
on $\call^q$. The extension $M^q$ can then be restricted to a locally
constant coefficent system $M_0$ on $\call_0$ via the inclusions
$\call_0 \subseteq \call^q_0 \subseteq \call^q$. We will furthermore
show that the extension $M^q$ is unique up to unique isomorphism extending
the identity morphism of $M$, and thus it makes sense to think of $M_0$ as
the restriction of $M$ to $\call_0$. Lemma \ref{H(Lq)=H(L)} shows that this
convention has the desired effect in cohomology.

Given a locally constant system of coefficients $M$ on $\call$, we construct
an extension $M^q$ of $M$ to $\call^q$ as follows. First recall that $M$ extends uniquely to a system of coefficients $\widehat{M}$ on the groupoid completion $\widehat{\call}$. Since every morphism in $\call^q$ is both a monomorphism and an epimorphism in the categorical sense (\cite[Corollary 3.10]{BCGLO1}), the inclusion $i \colon \call \to \call^q$ induces an inclusion of groupoid completions, $\widehat{i} \colon \widehat{\call}\to\widehat{\call^q}$. Since $i$ induces a homotopy equivalence on nerves, and in particular an isomorphism of fundamental groups, $\widehat{i}$ is an equivalence of categories.
This implies that $\widehat{\call}$ is a deformation retract of $\widehat{\call^q}$, and we can choose an inverse $r\colon\widehat{\call^q}\to\widehat{\call}$ such that $r \circ \widehat{i} = id_{\widehat{\call}}$.  Now the composite
 \[ M^q \colon \call^q\xrightarrow{\pi}\widehat{\call^q}\xrightarrow{r}\widehat{\call}\xrightarrow{\widehat{M}}
\R\mod, \]
where $\pi$ is the groupoid completion map, is a locally constant system of coefficients on $\call^q$ that extends $M$.

Fixing a choice for the retraction $r$, the construction of $M^q$ as the restriction of $\widehat{M}$ along $r\circ \pi$ clearly makes the assignment $M \mapsto M^q$ functorial. However this extension functor is by no means unique or even canonical as a different choice of retract would give rise to a different extension functor. The next lemma shows that, while there are many such extension functors, the difference between them is inconsequential.

\begin{lemma}
Let $\SFL$ be a $p$-local finite group, let $\call^q$ be the
associated quasicentric linking system, and let $M$ be a locally
constant system of coefficients on $\call$. If $M'$ is a locally
constant system of coefficients on $\call^q$ extending $M$, then
there is a unique natural isomorphism $\eta \colon M^q \Rightarrow M'$
that restricts to the identity transformation of $M$.
\end{lemma}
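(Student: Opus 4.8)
The plan is to transport the whole comparison to the groupoid completions, where both systems live and where $M^q$ was defined. Since $M'$ is locally constant on $\call^q$, it factors uniquely as $M' = \widehat{M'}\circ\pi$, with $\pi\colon\call^q\to\widehat{\call^q}$ the groupoid completion map and $\widehat{M'}\in\widehat{\call^q}\mod$; likewise $M^q = \widehat{M^q}\circ\pi$, and by construction $\widehat{M^q} = \widehat{M}\circ r$. A preliminary observation I would record is that for locally constant functors $F\circ\pi$ and $G\circ\pi$ on $\call^q$, a natural transformation $F\circ\pi \Rightarrow G\circ\pi$ is the same datum as a natural transformation $F\Rightarrow G$ on $\widehat{\call^q}$: the two families of components coincide because $\pi$ is the identity on objects, and naturality over all of $\widehat{\call^q}$ follows from naturality over $\call^q$ because every morphism of $\widehat{\call^q}$ is a composite of morphisms of $\call^q$ and their formal inverses, and naturality squares are closed under composition and under inversion of isomorphisms. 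The analogous statement holds over $\call$, with $\pi_{\call}\colon\call\to\widehat{\call}$.

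Next I would pin down the restrictions of $\widehat{M^q}$ and $\widehat{M'}$ along $\widehat{i}\colon\widehat{\call}\to\widehat{\call^q}$. On one side $\widehat{M^q}\circ\widehat{i} = \widehat{M}\circ r\circ\widehat{i} = \widehat{M}$, since $r$ was chosen so that $r\circ\widehat{i} = \Id_{\widehat{\call}}$. On the other side, the hypothesis that $M'$ extends $M$ means $M'\circ i = M$; composing the factorizations with the identity $\pi\circ i = \widehat{i}\circ\pi_{\call}$ (valid because $\widehat{i}$ is the functor induced by $i$ on groupoid completions) gives $\widehat{M'}\circ\widehat{i}\circ\pi_{\call} = M = \widehat{M}\circ\pi_{\call}$, and the uniqueness of the factorization of a locally constant functor through its groupoid completion forces $\widehat{M'}\circ\widehat{i} = \widehat{M}$ as well.

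With these identifications, the rest is formal. The functor $\widehat{i}$ is an equivalence of categories (as recorded in the construction of $M^q$), so precomposition $\widehat{i}^{*}\colon\widehat{\call^q}\mod\to\widehat{\call}\mod$ is an equivalence of functor categories (\cite{MacL}); in particular it is fully faithful and reflects isomorphisms. Hence there is a unique natural transformation $\widehat{\eta}\colon\widehat{M^q}\Rightarrow\widehat{M'}$ with $\widehat{i}^{*}\widehat{\eta} = \Id_{\widehat{M}}$, and it is automatically a natural isomorphism. Whiskering with $\pi$ (setting $\eta_c := \widehat{\eta}_c$) produces, by the preliminary observation, a natural isomorphism $\eta\colon M^q\Rightarrow M'$, and whiskering $\widehat{i}^{*}\widehat{\eta} = \Id_{\widehat{M}}$ with $\pi_{\call}$ shows $i^{*}\eta = \Id_{M}$, i.e. $\eta$ restricts to the identity transformation of $M$. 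For uniqueness, any natural isomorphism $\eta'\colon M^q\Rightarrow M'$ with $i^{*}\eta' = \Id_{M}$ is, again by the preliminary observation, the whiskering of a unique $\widehat{\eta}'\colon\widehat{M^q}\Rightarrow\widehat{M'}$; the condition $i^{*}\eta' = \Id_{M}$ forces $\widehat{i}^{*}\widehat{\eta}' = \Id_{\widehat{M}}$, since whiskering with $\pi_{\call}$ is injective on natural transformations; so $\widehat{\eta}' = \widehat{\eta}$ by the uniqueness clause above, hence $\eta' = \eta$.

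I expect the only step needing genuine care to be the on-the-nose equality $\widehat{M'}\circ\widehat{i} = \widehat{M}$: it is tempting to obtain it only up to isomorphism, and getting strict equality is precisely where one must use that $M'$ \emph{extends} $M$ (rather than merely agreeing with it up to isomorphism) together with the uniqueness of the groupoid-completion factorization and the compatibility $\pi\circ i = \widehat{i}\circ\pi_{\call}$. Everything else is bookkeeping with whiskering and with the standard fact that an equivalence of categories induces a fully faithful, isomorphism-reflecting functor on functor categories.
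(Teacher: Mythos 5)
Your proof is correct, but it takes a genuinely different route from the paper's. The paper simply writes down the isomorphism explicitly: it defines $\eta_P$ as the composite
\[
  \eta_P \colon M^q(P) \xrightarrow{M^q(\iota_P)} M^q(S) = M(S) = M'(S) \xrightarrow{M'(\iota_P)^{-1}} M'(P),
\]
using the chosen compatible system of inclusions and the fact that locally constant systems send the $\iota_P$ to isomorphisms, and then leaves naturality and uniqueness as a routine check. You instead lift everything to groupoid completions: you show $\widehat{M^q}\circ\widehat{i} = \widehat{M} = \widehat{M'}\circ\widehat{i}$ (the first on the nose from $r\circ\widehat{i}=\Id$, the second via the commuting square $\pi\circ i=\widehat{i}\circ\pi_{\call}$ together with uniqueness of the groupoid-completion factorization), and then invoke that precomposition along the equivalence $\widehat{i}$ is fully faithful and reflects isomorphisms, which hands you existence, uniqueness, and invertibility of $\widehat{\eta}$ all at once; descending to $\call^q$ and $\call$ via your preliminary whiskering observation finishes it. Your preliminary observation is a genuine lemma and is correctly justified (naturality squares are closed under composition and under inversion when the morphism is invertible, and every morphism of the groupoid completion is a zigzag of morphisms from $\call^q$ and their formal inverses). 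What your approach buys is that naturality and uniqueness come for free from the abstract nonsense rather than a by-hand verification; what the paper's approach buys is an explicit, checkable formula for $\eta_P$ in terms of the inclusion data. Both are complete; yours arguably surfaces more clearly where the "extends" hypothesis, as opposed to "agrees up to isomorphism", is used, namely in forcing the strict equality $\widehat{M'}\circ\widehat{i}=\widehat{M}$.
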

\begin{proof}
The isomorphism $\eta$ sends $P$ to the composite
 \[ \eta_P \colon M^q(P) \xrightarrow{M^q(\iota_P)} M^q(S) = M(S) = M'(S) \xrightarrow{M'(\iota_P)^{-1}} M'(P). \]
The verification of naturality and uniqueness of $\eta$ is routine and is left for the reader.
\end{proof}

In light of this uniqueness result, we make the following definition.

\begin{definition}
Let $\SFL$ be a $p$-local finite group and let $\Gamma = \Gamma_p(\calf)$ or $\Gamma_{p'}(\calf)$.
Let $H\le \Gamma$, let $(S_H, \calf_H,\call_H)$ be the corresponding $p$-local subgroup,
and let $\iota^q \colon\call^q_H\to\call^q$ be the inclusion.
For a locally constant system of coefficients $M$ on $\call$, the
\emph{restriction of $M$ to $\call_H$} is the composite
 \[ M_H  \colon \call_H \subseteq \call^q_H \subseteq \call^q \xrightarrow{M^q} \R\mod. \]
The \emph{restriction map} $\Res_H \colon H^*(\call,M) \to H^*(\call_H,M_H)$ is the
unique map that fits into the commutative diagram
 \[ \xymatrix{
   H^*(\call^q,M^q) \ar[d]^\cong \ar[rr]^{\iota^*} && H^*(\call^q_H,\iota^*M^q) \ar[d]^\cong \\
   H^*(\call,M) \ar[rr]^{\Res_H} && H^*(\call_H,M_H),
 }
 \]
where the vertical maps are the isomorphisms from Lemma \ref{H(Lq)=H(L)}.
\end{definition}

When $\Gamma = \Gamma_{p'}(\calf)$, there is a well-defined inclusion $\call_H \subset \call$,
and we note that $M_H$ is just restriction along this inclusion, and $\Res_H$ is the usual
restriction map.

The following proposition holds for both centric and quasicentric linking systems,
although we state it using the notation for centric linking systems.

\begin{proposition}\label{plf-subgrps-loc-const-coeff}
Let $\SFL$ be a $p$-local finite group and let $\Gamma = \Gamma_p(\calf)$ or $\Gamma_{p'}(\calf)$.
Let $H\le \Gamma$, and let $(S_H, \calf_H,\call_H)$ be the corresponding $p$-local subgroup. For
any locally constant system of coefficients $M\in\call\mod$ there exists a map
\[Tr_H\colon H^*(\call_H, M_H)\to H^*(\call, M),\]
such that $Tr_H \circ \Res_H$ is multiplication by $|\Gamma\colon H|$
on $H^*(\call, M)$. Furthermore, the map thus defined coincides with
the transfer map associated with the covering $|\call_H|\to|\call|$.
\end{proposition}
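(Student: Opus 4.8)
The plan is to reduce the statement to the covering-space situation over the quasicentric linking system $\callq$ and then quote Proposition \ref{RKE along hty covering}. Let $\iota^q\colon\callq_H\to\callq$ denote the inclusion functor. The first point to establish is that $|\iota^q|\colon|\callq_H|\to|\callq|$ is a covering space up to homotopy with finite fibre: by Theorem \ref{Classification-summary}(iv) the nerve $|\call_H|$ is homotopy equivalent to the covering of $|\callq|\simeq|\call|$ with fibre $\Gamma/H$, and the inclusions $\call\subseteq\callq$ and $\call_H\subseteq\callq_H$ induce homotopy equivalences on nerves (Lemma \ref{H(Lq)=H(L)}, applied to $\SFL$ and to $(S_H,\calf_H,\call_H)$); hence $|\iota^q|$ realizes this covering, and its fibre $\Gamma/H$ is finite because $\Gamma$ is a finite group.

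With this in hand I would apply Proposition \ref{RKE along hty covering} to $\tau=\iota^q$ with the locally constant coefficient system $M^q$ on $\callq$. Writing $\Gamma'=\pi_1(|\callq_H|)$ and $\pi\colon(\callq)_{\Gamma'}\to\callq$ for the projection, this yields a natural isomorphism $H^*(\callq,R_\pi(\pi^*M^q))\cong H^*(\callq_H,(\iota^q)^*M^q)$ together with a natural transformation $T\colon R_\pi(\pi^*M^q)\to M^q$ such that the composite of $(\iota^q)^*$ with this isomorphism and $T_*$ is multiplication by the index of the covering, which is $|\Gamma\colon H|$. Now $(\iota^q)^*M^q$ is locally constant on $\callq_H$ and, by the definition of $M_H$, restricts along $\call_H\subseteq\callq_H$ to $M_H$; so Corollary \ref{nerves=>cohomology} gives $H^*(\call_H,M_H)\cong H^*(\callq_H,(\iota^q)^*M^q)$, while Lemma \ref{H(Lq)=H(L)} gives $H^*(\callq,M^q)\cong H^*(\call,M)$. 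I would then \emph{define} $Tr_H$ to be the composite
\begin{multline*}
 H^*(\call_H,M_H)\xrightarrow{\cong}H^*(\callq_H,(\iota^q)^*M^q)\xrightarrow{\cong}H^*(\callq,R_\pi(\pi^*M^q))\\
 \xrightarrow{T_*}H^*(\callq,M^q)\xrightarrow{\cong}H^*(\call,M).
\end{multline*}

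It remains to check the two claimed properties. For the first, recall that $\Res_H$ was defined precisely so that, under the isomorphisms $H^*(\call,M)\cong H^*(\callq,M^q)$ and $H^*(\call_H,M_H)\cong H^*(\callq_H,(\iota^q)^*M^q)$, it corresponds to $(\iota^q)^*$; hence $Tr_H\circ\Res_H$ corresponds to $T_*\circ(\text{iso})\circ(\iota^q)^*$ on $H^*(\callq,M^q)$, which is multiplication by $|\Gamma\colon H|$ by Proposition \ref{RKE along hty covering}. For the second, Lemma \ref{categorical covering}(i) provides a lift $\widehat{\iota^q}\colon\callq_H\to(\callq)_{\Gamma'}$ over $\callq$ that is a nerve equivalence, and Corollary \ref{geometric} identifies the middle portion $H^*((\callq)_{\Gamma'},\pi^*M^q)\cong H^*(\callq,R_\pi(\pi^*M^q))\xrightarrow{T_*}H^*(\callq,M^q)$ with the geometric transfer of $|(\callq)_{\Gamma'}|\to|\callq|$; feeding in the homotopy equivalences $|\call|\simeq|\callq|$ and $|\call_H|\simeq|\callq_H|\simeq|(\callq)_{\Gamma'}|$, all of which commute with the relevant covering projections, identifies $Tr_H$ with the transfer of the covering $|\call_H|\to|\call|$. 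I expect the main obstacle to be this last bookkeeping: verifying that the chain of nerve equivalences relating $|\call_H|\to|\call|$ to $|(\callq)_{\Gamma'}|\to|\callq|$ is genuinely compatible with the covering maps, so that Corollary \ref{geometric} applies, and that $\Res_H$ really does correspond to $(\iota^q)^*$ after all the identifications of cohomology groups; both are diagram chases, but they are where the real care is needed.
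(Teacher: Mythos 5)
Your proposal is correct and follows essentially the same route as the paper: reduce to the quasicentric linking system, invoke Theorem \ref{Classification-summary}(iv) to see that $|\callq_H|\to|\callq|$ is a finite covering up to homotopy, apply Proposition \ref{RKE along hty covering} to obtain the natural transformation $T$ and the index formula, and then invoke Corollary \ref{geometric} for the comparison with the geometric transfer. The paper states this more tersely (``it suffices to prove this in the quasicentric case, as the centric case is obtained by restriction''), whereas you explicitly unwind the chain of isomorphisms $H^*(\call_H,M_H)\cong H^*(\callq_H,(\iota^q)^*M^q)$ and $H^*(\callq,M^q)\cong H^*(\call,M)$ and check that $\Res_H$ transports to $(\iota^q)^*$ under them; that extra bookkeeping is correct and is implicitly what the paper means. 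One small imprecision: you cite Lemma \ref{H(Lq)=H(L)} as the source of the nerve equivalences $|\call|\simeq|\callq|$ and $|\call_H|\simeq|\callq_H|$, but that lemma only records the induced cohomology isomorphism; the underlying nerve equivalence is the input to its proof (from \cite{BCGLO1}), not its conclusion.
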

\begin{proof}
It suffices to prove this in the quasicentric case, as the centric
case is obtained by restriction. Let $\iota \colon \call^q_H \to \call^q$
be the inclusion. By Theorem \ref{Classification-summary},
the map $|\iota|\colon|\call_H^q|\to|\call^q|$ is a covering space up to
homotopy, with homotopy fibre equivalent to $\Gamma/H$. Thus
Proposition \ref{RKE along hty covering} applies and one has a
system of coefficients $M'\in\call\mod$, such that
\[H^*(\call^q, M') \cong H^*(\call^q_H, \iota^*M),\]
and a natural transformation $T\colon M'\to M$ such that the
composite
\[H^*(\call^q, M)\xrightarrow{\iota^*} H^*(\call^q_H, \iota^*M) \cong H^*(\call^q,
M')\xrightarrow{T_*} H^*(\call^q, M)\] is multiplication by the index
$|\Gamma\colon H|$. Identifying the two middle terms in the sequence
via the isomorphism between them, define $Tr$ to be the map $T_*$.

The second statement follows at once from Corollary \ref{geometric}.
\end{proof}

\subsection{Subsystems of Index Prime to $p$}
We now restrict attention to subgroups of $p$-local finite groups of index prime to $p$.
It is in this context that we are able to obtain our most general results.
However some of the discussion here has more general implications as well.

Throughout this subsection, fix a $p$-local finite group $\SFL$ and
let $\Gamma = \Gamma_{p'}(\calf)$. Fix a subgroup $H \leq \Gamma$
and let $\iota\colon\call_H\to\call$ be the inclusion We assume
throughout that a compatible system of inclusions for $\call$ has
been chosen, and by \emph{restriction} of a morphism in $\call$ we
always mean restriction with respect to the chosen inclusions (see
Subsection \ref{sub:InclRes}). Let $\widehat{\Theta}\colon \call\to
\calb(\Gamma)$ denote the projection, as in Subsection 1.5. If $\alpha$
is a morphism in $\call$, we denote by $[\alpha]$ the class of
$\alpha$ in $\calf$.

Recall that a category is a \emph{discrete groupoid} if each of its morphism sets either consists of a single isomorphism or is empty.

\begin{lemma}\label{RetractionR}
For $P\in\call$, let $\calg_H(P) \subseteq P\downarrow\iota$ and $\calg^H(P) \subseteq
\iota\downarrow P$ denote the full subcategories, whose objects, respectively, are of
the form $(P^\alpha,\alpha)$ and $(P^\alpha,\alpha^{-1})$ for some $\alpha\in\mor_{\call}(P,S)$.
Then $\calg_H(P)$ and $\calg^H(P)$ are discrete
groupoids. Furthermore, $\calg_H(P)$ is a left deformation retract of
$P \downarrow \iota$, and $\calg^H(P)$ is a right deformation retract of $\iota\downarrow P$.
\end{lemma}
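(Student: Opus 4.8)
The plan is to first unwind the combinatorics of the two overcategories in the case at hand, using the hypothesis $\Gamma = \Gamma_{p'}(\calf)$. Recall that for this choice of $\Gamma$, part (ii) of Theorem~\ref{Classification-summary} says that $\call_H$ has exactly the $\calf$-centric subgroups $P \le S_H$ as objects, and that being $\calf_H$-centric is the same as being $\calf$-centric. So for $P \in \call$ (an $\calf$-centric subgroup of $S$), an object of $P\downarrow\iota$ is a pair $(R,\gamma)$ with $R$ an object of $\call_H$ and $\gamma \in \mor_\call(P,R)$, and a morphism $(R,\gamma)\to(R',\gamma')$ is a morphism $\delta \in \mor_{\call_H}(R,R')$ with $\delta\gamma = \gamma'$. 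The first step is to show that every such object receives a (unique, up to the groupoid structure) morphism from an object of the proposed form $(P^\alpha,\alpha)$. Given $(R,\gamma)$, one restricts $\gamma$ along the chosen inclusions to get the isomorphism $\gamma_{P}^{P^\gamma}\colon P \to P^\gamma$ and factors $\gamma = \iota_{P^\gamma}^{R}\circ \gamma_{P}^{P^\gamma}$, where $\iota_{P^\gamma}^{R}$ is the compatible inclusion; the key point is that $P^\gamma$ is again an $\calf$-centric subgroup of $S_H$, hence an object of $\call_H$, and that $\alpha := \gamma_{P}^{P^\gamma}$, viewed as a morphism $P\to S_H\hookrightarrow S$ — or more precisely its extension to target $S$ — is a morphism in $\call$ landing in $\call_H$. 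This exhibits $\iota_{P^\gamma}^{R}$ as a morphism $(P^\gamma,\alpha)\to(R,\gamma)$ in $P\downarrow\iota$.

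The second step is to check that $\calg_H(P)$ is a discrete groupoid: a morphism $(P^\alpha,\alpha)\to(P^\beta,\beta)$ is a $\delta$ with $\delta\alpha=\beta$; since $\alpha,\beta$ are both isomorphisms onto their images (being restrictions of morphisms in $\call$), $\delta = \beta\alpha^{-1}$ is forced, so hom-sets in $\calg_H(P)$ have at most one element, and when nonempty that element is an isomorphism. This uses only that morphisms in $\call$ are monos and epis in the categorical sense and induce isomorphisms onto their images (Subsection~\ref{sub:InclRes} and \cite[Corollary 3.10]{BCGLO1}).

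The third and main step is constructing the left deformation retraction. Define $r\colon P\downarrow\iota \to \calg_H(P)$ on objects by $r(R,\gamma) = (P^\gamma, \alpha)$ with $\alpha$ as above, and on a morphism $\delta\colon(R,\gamma)\to(R',\gamma')$ by the induced isomorphism $P^\gamma \to P^{\gamma'}$ (which is the appropriate restriction of $\delta$, well-defined since $\delta\gamma=\gamma'$ forces $P^\gamma{}^{\delta} = P^{\gamma'}$); one must check this is functorial, which reduces to uniqueness of restrictions along the compatible inclusions. That $r$ restricts to the identity on $\calg_H(P)$ is immediate from the normalization $\iota_S^S = \Id$ and property (ii) of compatible inclusions. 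The natural transformation $\eta\colon \iota_{\calg_H(P)}\circ r \Rightarrow 1_{P\downarrow\iota}$ has component at $(R,\gamma)$ given by the compatible inclusion $\iota_{P^\gamma}^{R}\colon (P^\gamma,\alpha)\to(R,\gamma)$ built in step one; naturality is again a compatible-inclusions bookkeeping check, and $\eta$ is the identity on objects of $\calg_H(P)$ because there $\iota_{P^\gamma}^{P^\gamma}$ must be the identity morphism. Finally, the statement about $\calg^H(P)$ and $\iota\downarrow P$ follows by the dual argument (replacing $\alpha$ by $\alpha^{-1}$ throughout), exactly as the paper systematically dualizes between limits/colimits and left/right deformation retracts.

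I expect the main obstacle to be the bookkeeping around restrictions along the compatible system of inclusions — specifically, verifying functoriality of $r$ and naturality of $\eta$, both of which come down to repeated use of the uniqueness clause in the definition of restriction (the unique $\varphi_{P'}^{Q'}$ with $\iota_{Q'}^Q\circ\varphi_{P'}^{Q'} = \varphi\circ\iota_{P'}^P$) together with property (ii), $\iota_Q^R\circ\iota_P^Q = \iota_P^R$. None of this is deep, but it is the kind of computation where one must be careful about which morphisms have which sources and targets after restriction; the conceptual content — that the "image" construction $\gamma\mapsto(P^\gamma,\gamma_{P}^{P^\gamma})$ gives a retraction onto the full subcategory of such images — is straightforward given Theorem~\ref{Classification-summary}(ii), which is what guarantees $P^\gamma$ lands in $\call_H$.
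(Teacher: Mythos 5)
Your treatment of $\calg_H(P)$ is correct and follows the same route as the paper: you show the discrete-groupoid property by noting that morphisms in $\call$ are isomorphisms onto their images so the connecting morphism $\beta\circ\alpha^{-1}$ is forced, and you build the retraction by sending $(Q,\gamma)$ to $(P^\gamma,\gamma_P^{P^\gamma})$ with the natural transformation $\eta$ given by the compatible inclusion $\iota_{P^\gamma}^Q$. This is exactly what the paper does.

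However, your last sentence --- that the statement about $\calg^H(P)$ and $\iota\downarrow P$ ``follows by the dual argument (replacing $\alpha$ by $\alpha^{-1}$ throughout)'' --- contains a genuine gap, and it is not a bookkeeping issue. An object of $\iota\downarrow P$ is a pair $(Q,\alpha)$ with $Q\in\call_H$ and $\alpha\colon Q\to P$ a morphism in $\call$; in general $|Q|<|P|$ so $\alpha$ is \emph{not} invertible, and ``$\alpha^{-1}$'' simply does not exist. There is no image construction to dualize either: the image $Q^\alpha\le P$ has the wrong order and is not of the form $P^\delta$ for any $\delta\in\mor_\call(P,S)$. To produce a morphism from an arbitrary $(Q,\alpha)$ into $\calg^H(P)$ one must exhibit $\delta\in\mor_\call(P,S)$ with $\widehat{\Theta}(\delta)=\widehat{\Theta}(\alpha)^{-1}$, so that $\delta\circ\alpha$ lies in $\call_H$ and gives a morphism $(Q,\alpha)\to(P^\delta,\delta^{-1})$. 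The existence of such a $\delta$ is not automatic: the paper invokes Theorem~\ref{Classification-summary}(v), the surjectivity of $\widehat{\Theta}\colon\aut_\call(S)\to\Gamma$, to choose a $\bar\gamma_{(Q,\alpha)}\in\aut_\call(S)$ with the prescribed $\widehat{\Theta}$-value and then restricts it to $P$. This introduces a genuine (non-canonical) choice in the construction of $r$ and $\eta$, and accounts for the asymmetry between the two cases. Your proposal silently assumes $\alpha$ is an isomorphism, which covers only the objects already in $\calg^H(P)$, so the retraction on the rest of $\iota\downarrow P$ is not constructed.

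A smaller remark: you should be explicit that for $\Gamma=\Gamma_{p'}(\calf)$ one has $S_H=S$ (since $S$ is a $p$-group mapping to the $p'$-group $\Gamma$, forcing $\theta$ to be trivial), so that $\call_H$ and $\call$ have the same objects; this is what makes $P^\gamma$ land in $\call_H$, and the paper states it up front.
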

\begin{proof}
Since $\call_H$ and $\call$ have the same objects by Theorem
\ref{Classification-summary}, the category $P\downarrow \iota$ is
nonempty for any $P\in\call$. Thus let
$\alpha,\beta\in\hom_{\call}(P,S)$ be any morphisms. Then
$(P^\alpha,\alpha)$ and $(P^\beta, \beta)$ are objects in
$\calg_H(P)$. A morphism between them is a morphism $\varphi\colon
P^\alpha\to P^\beta$ in $\call_H$, such that
$\varphi\circ\alpha=\beta$. But $\alpha\colon P\to P^\alpha$ is an
isomorphism in $\call$, and hence invertible. Thus $\varphi =
\beta\circ\alpha^{-1}$, and so if a morphism between two given
objects exists, then it is unique, and is obviously an isomorphism.
This shows that $\calg_H(P)$ is a discrete groupoid. An analogous
argument shows that $\calg^H(P)$ is also a discrete groupoid.

Let $j\colon\calg_H(P)\to P\downarrow\iota$ denote the inclusion,
and define $r\colon P\downarrow\iota\to\calg_H(P)$ as follows. On
objects $r(Q,\alpha)
\defeq (P^\alpha,\alpha)$, noting that $P^\alpha\le Q$, while any morphism
$(Q,\alpha)\xrightarrow{\varphi}(Q',\alpha')$ is taken by $r$ to
$\alpha'\circ\alpha^{-1}$. For this to be well defined, $\alpha'\circ\alpha^{-1}$
should be a morphism in $\call_H$, and this is indeed the case since
$\varphi \circ \alpha' = \alpha$ and $\widehat{\Theta}(\varphi) \in
H$ implies $\widehat{\Theta}(\alpha'\circ\alpha^{-1}) \in H$. The
composite $r\circ j$ is the identity functor on $\calg_H(P)$, and there
is a natural transformation $\eta\colon j\circ r \to\Id_{P\downarrow\iota}$
that takes an object $(Q,\alpha)$ in
$P\downarrow\iota$ to the morphism $(P^\alpha,\alpha)\to(Q,\alpha)$ induced by the inclusion. Clearly $\eta j$ is the identity
transformation of $j$. This shows that $\calg_H(P)$ is a left
deformation retract of $P\downarrow \iota$, as claimed.

Next, let $j$ denote the inclusion $\calg^H(P)\to \iota\downarrow P$.
For an object $(Q,\alpha)$ in $\iota \downarrow P$ we choose a morphism
$\gamma_{(Q,\alpha)} \in\mor_{\call}(P,S)$ with $\widehat{\Theta}(\gamma_{(Q,\alpha)}) = \widehat{\Theta}(\alpha)^{-1}$ as follows. If $\alpha$ is an isomorphism, then
we just take $\gamma_{(Q,\alpha)} = \alpha^{-1}$. Otherwise, we choose, using Theorem
\ref{Classification-summary}(v), a morphism $\bar{\gamma}_{(Q,\alpha)} \in \aut_\call(S)$
such that
$\widehat{\Theta}(\bar{\gamma}_{(Q,\alpha)}) = \widehat{\Theta}(\alpha)^{-1}$,
and we let $\gamma_{(Q,\alpha)}$ be the restriction of $\bar{\gamma}_{(Q,\alpha)}$
to $P$. We now define the retraction $r \colon \iota\downarrow P \to \calg^H(P)$ by
setting $r(Q,\alpha) = (P^{\gamma_{(Q,\alpha)}},\gamma_{(Q,\alpha)}^{-1})$, and sending
a morphism $(Q,\alpha) \xrightarrow{\varphi} (R,\beta)$ to the unique morphism
$\gamma_{(R,\beta)} \circ \gamma_{(Q,\alpha)}^{-1} \colon (P^{\gamma_{(Q,\alpha)}},\gamma_{(Q,\alpha)}^{-1}) \to (P^{\gamma_{(R,\beta)}},\gamma_{(R,\beta)}^{-1})$ in $\calg^H(P)$.
Observe that this is well defined since
\[ \widehat{\Theta}(\gamma_{(R,\beta)} \circ \gamma_{(Q,\alpha)}^{-1}) =\widehat{\Theta}(\beta)^{-1} \widehat{\Theta}(\alpha) = \widehat{\Theta}(\varphi) \in H,\]
and so $\gamma_{(R,\beta)} \circ \gamma_{(Q,\alpha)}^{-1}$ is in $\call_H$.
With this definition we have $r\circ j = \Id_{\calg^H(P)}$. Furthermore, if
$\eta \colon \Id_{\iota\downarrow P} \to j\circ r$ is the natural transformation
that sends an object $(Q,\alpha)$ to the morphism $\gamma_{(Q,\alpha)} \circ \alpha$,
then $\eta j$ is the identity transformation of $j$, and so $\calg^H(P)$ is a right
deformation retract of $\iota \downarrow P$.
\end{proof}

Recall that the homomorphism induced by the restriction of
$\widehat{\Theta}\colon \callq\to \calb(\Gamma)$ to
$\aut_{\call}(S)$ is surjective by Theorem \ref{Classification-summary}. Thus the
composite with the projection to the right cosets
\begin{equation}
\label{aut2cosets}\aut_{\call}(S)\to \Gamma \to \Gamma/H
\end{equation}
is also surjective.

\begin{lemma}\label{CH(P)-groupoid}
Consider the set of left cosets $\Gamma/H$ as a category with
objects $\Gamma/H$ and no nontrivial morphisms. Then for any
$P\in\call$  the functor
\[\delta_P\colon\calg_H(P)\to \Gamma/H\]
taking an object $(P^\alpha,\alpha)$ to the coset
$\widehat{\Theta}(\alpha)^{-1}H$ is an equivalence of categories.
\end{lemma}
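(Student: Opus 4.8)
The plan is to check that $\delta_P$ is a well-defined functor which is both essentially surjective and fully faithful, exploiting the fact, established in Lemma~\ref{RetractionR}, that $\calg_H(P)$ is a discrete groupoid (and that $\Gamma/H$, carrying only identity morphisms, is a discrete category).

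First I would verify that $\delta_P$ is well defined. Since $\Gamma/H$ has no nonidentity morphisms, the only thing to check is that the existence of a morphism $(P^\alpha,\alpha)\to(P^\beta,\beta)$ in $\calg_H(P)$ forces $\widehat{\Theta}(\alpha)^{-1}H=\widehat{\Theta}(\beta)^{-1}H$. As noted in the proof of Lemma~\ref{RetractionR}, such a morphism is the composite $\beta\circ\alpha^{-1}\colon P^\alpha\to P^\beta$, and as a morphism of $\call_H$ it satisfies $\widehat{\Theta}(\beta)\widehat{\Theta}(\alpha)^{-1}=\widehat{\Theta}(\beta\circ\alpha^{-1})\in H$; this is precisely the statement that the two cosets agree. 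Preservation of identities and composites is then immediate, so $\delta_P$ is a functor.

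Next, for essential surjectivity I would appeal to Theorem~\ref{Classification-summary}(v), in the form of the surjectivity of the composite $\aut_{\call}(S)\to\Gamma\to\Gamma/H$ recorded in \eqref{aut2cosets}. Given $gH\in\Gamma/H$, pick $\bar\gamma\in\aut_{\call}(S)$ with $\widehat{\Theta}(\bar\gamma)^{-1}H=gH$, and let $\gamma\in\mor_{\call}(P,S)$ be the restriction of $\bar\gamma$ to $P$. Because $\widehat{\Theta}$ sends the chosen compatible inclusions to the identity, $\widehat{\Theta}(\gamma)=\widehat{\Theta}(\bar\gamma)$, so $(P^\gamma,\gamma)$ is an object of $\calg_H(P)$ with $\delta_P(P^\gamma,\gamma)=gH$. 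For full faithfulness, since the two morphism sets in question are each either empty or a singleton, it suffices to show that $\mor_{\calg_H(P)}((P^\alpha,\alpha),(P^\beta,\beta))$ is nonempty if and only if $\widehat{\Theta}(\alpha)^{-1}H=\widehat{\Theta}(\beta)^{-1}H$. The forward implication is the computation of the previous paragraph; for the converse, equality of the cosets gives $\widehat{\Theta}(\beta)\widehat{\Theta}(\alpha)^{-1}\in H$, so the isomorphism $\beta\circ\alpha^{-1}\colon P^\alpha\to P^\beta$ of $\call$ already lies in $\call_H$ and hence defines a morphism $(P^\alpha,\alpha)\to(P^\beta,\beta)$ in $\calg_H(P)$. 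Essential surjectivity together with full faithfulness yields the asserted equivalence of categories.

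All of this is bookkeeping; the one step carrying genuine content is essential surjectivity, where it is indispensable to invoke Theorem~\ref{Classification-summary}(v): without surjectivity of $\aut_{\call}(S)\to\Gamma/H$ one could not realize every coset of $H$ by an object of $\calg_H(P)$, since the structure morphisms of such objects are forced to be restrictions to $P$ of morphisms of $\call$ out of $P$. The only other point to keep track of is that restriction of morphisms in $\call$ is compatible with $\widehat{\Theta}$, which holds precisely because $\widehat{\Theta}$ trivializes the chosen inclusions.
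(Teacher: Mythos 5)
Your proof is correct and follows essentially the same approach as the paper's: both appeal to Theorem~\ref{Classification-summary}(v) for surjectivity on objects, and both verify that a morphism $(P^\alpha,\alpha)\to(P^\beta,\beta)$ exists in $\calg_H(P)$ precisely when $\widehat{\Theta}(\alpha)^{-1}H=\widehat{\Theta}(\beta)^{-1}H$. The paper phrases this in terms of connected components while you phrase it as essential surjectivity plus full faithfulness, but the underlying computation is the same.
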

\begin{proof}
Recall that an object in $\calg_H(P)$ is of the form
$(P^\alpha,\alpha)$ where $\alpha\colon P \to P^\alpha \le S$ is an
isomorphism in $\call$. Surjectivity of $\delta_P$ on objects
follows from the surjectivity of $\widehat{\Theta} \colon
\aut_\call(S) \to \Gamma$.

Two objects $(P^\alpha,\alpha)$ and $(P^\beta,\beta)$ are in the
same connected component of $\calg_H(P)$ if and only if
$\beta\circ\alpha^{-1}\colon P^\alpha\to P^\beta$ is a morphism in
$\call_H$. This is the case if and only if
$\widehat{\Theta}(\beta\circ\alpha^{-1})\in H$, or equivalently if and only if $\widehat{\Theta}(\alpha^{-1})H = \widehat{\Theta}(\beta^{-1})H$, or in other words if and only if $\delta_P(P^\alpha,\alpha) =
\delta_P(P^\beta, \beta)$. This shows both that $\delta_P$ is well
defined and that it is an equivalence of categories as claimed.
\end{proof}

We next observe that calculating limits over a discrete groupoid is particularly easy.

\begin{lemma}\label{groupoid}
Let $\calg$ be a groupoid, and let $M\colon\calg\to\Ab$ be a
functor. Then
\[\lim_\calg M \cong \prod_{\calg_i} \lim_{\calg_i}M \cong
\prod_{\calg_i} M(x_i)^{\aut_{\calg_i}(x_i)},\]
where the product runs over connected components $\calg_i$ of $\calg$, and $x_i$ is an arbitrary object in $\calg_i$. In particular if $\calg$ is
discrete, then
\[\lim_{\calg}M \cong \prod_{\calg_i}
M(x_i).\]
Dually,
\[\colimnoarrow_{\calg}{M}\cong\bigoplus_{\calg_i}M(x_i)_{\aut_{\calg_i}(x_i)},\]
where the subscript means coinvariants. In particular if $\calg$ is discrete, then
\[\colimnoarrow_{\calg}M \cong\bigoplus_{\calg_i}
M(x_i).\]

\end{lemma}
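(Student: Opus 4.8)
The plan is to reduce the statement, one connected component at a time, to the standard identification of the limit and colimit over a one-object groupoid $\calb(G)$ with the invariants and coinvariants of the corresponding $G$-module.

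First I would decompose $\calg$ as a disjoint union $\coprod_i\calg_i$ of its connected components. A functor out of a disjoint union of categories is the same datum as a family of functors out of the summands, so via \eqref{eq:limashom} a natural transformation $\R_\calg\to M$ is the same as a family of natural transformations $\R_{\calg_i}\to M|_{\calg_i}$; hence $\lim_\calg M\cong\prod_i\lim_{\calg_i}(M|_{\calg_i})$, and dually (using that $\colimnoarrow$ corepresents, and that the coproduct in $\Ab$ is the direct sum) $\colimnoarrow_\calg M\cong\bigoplus_i\colimnoarrow_{\calg_i}(M|_{\calg_i})$. This reduces everything to the case where $\calg$ is connected; write $\calg_i$ for such a component and fix an object $x_i\in\calg_i$, with $G_i\defeq\aut_{\calg_i}(x_i)$.

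Next I would compare $\calg_i$ with the full subcategory $\calb(G_i)$ on the single object $x_i$, via the inclusion $f\colon\calb(G_i)\hookrightarrow\calg_i$. I claim $f$ is at once a left and a right deformation retract in the sense of Definition \ref{deformation retract}. To produce the retraction, choose for every object $y\in\calg_i$ a morphism $u_y\colon x_i\to y$ in the groupoid $\calg_i$ with $u_{x_i}=\Id_{x_i}$, and let $r\colon\calg_i\to\calb(G_i)$ send every object to the unique object of $\calb(G_i)$ and a morphism $\varphi\colon y\to z$ to $u_z^{-1}\circ\varphi\circ u_y\in G_i$. Then $r\circ f=\Id_{\calb(G_i)}$, the assignment $y\mapsto u_y$ is a natural isomorphism $f\circ r\xrightarrow{\ \cong\ }\Id_{\calg_i}$ which is the identity at $x_i$, and the assignment $y\mapsto u_y^{-1}$ is a natural isomorphism $\Id_{\calg_i}\xrightarrow{\ \cong\ }f\circ r$ which is the identity at $x_i$; these exhibit $f$ as a left, respectively right, deformation retract. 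Lemma \ref{lem:DefRetLimits} (in the degree-$0$ form stated there, which is exactly what is needed) then yields isomorphisms $\lim_{\calg_i}M\cong\lim_{\calb(G_i)}(M\circ f)$ and $\colimnoarrow_{\calg_i}M\cong\colimnoarrow_{\calb(G_i)}(M\circ f)$.

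Finally I would invoke the standard computation for a $G$-module $N\colon\calb(G)\to\Ab$: from the description of a limit (resp.\ colimit) as an equalizer (resp.\ coequalizer) of the $G$-action — equivalently from \eqref{eq:limashom} and its dual with $\R=\Z$ — one has $\lim_{\calb(G)}N\cong N^{G}$ and $\colimnoarrow_{\calb(G)}N\cong N_{G}$, the invariants and coinvariants. Chaining the three steps gives $\lim_\calg M\cong\prod_i M(x_i)^{\aut_{\calg_i}(x_i)}$ and $\colimnoarrow_\calg M\cong\bigoplus_i M(x_i)_{\aut_{\calg_i}(x_i)}$. When $\calg$ is discrete each $\aut_{\calg_i}(x_i)$ is trivial, so these invariants and coinvariants are the whole module and the last two displayed formulas drop out. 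The only point requiring genuine (though still routine) care is the verification in the previous paragraph that $f$ is simultaneously a left and a right deformation retract; everything else is formal, and one could in fact bypass that step by a direct argument with compatible families, observing that a family $(m_x)_x$ with $M(\varphi)(m_x)=m_y$ for all $\varphi$ is determined within each component by its necessarily $G_i$-invariant value at $x_i$.
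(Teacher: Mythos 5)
Your proof is correct and follows essentially the same route as the paper's: decompose into connected components, observe that each component is equivalent to the one-object groupoid $\calb(\aut_{\calg_i}(x_i))$, and identify the limit (resp.\ colimit) over that one-object groupoid with invariants (resp.\ coinvariants). The only stylistic difference is that you justify the equivalence step via Lemma \ref{lem:DefRetLimits} and an explicit two-sided deformation retract, whereas the paper simply cites the standard fact that equivalences of categories preserve limits and colimits; both are fine, and your version makes the verification a bit more self-contained.
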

\begin{proof}
The first isomorphism is clear. For the second, observe that each
connected component of a groupoid is equivalent as a category to the
group of automorphisms of any object in it. Thus if $x_i$ is an
object in $\calg_i$, then
\[\lim_{\calg_i}M \cong \lim_{\calb(\aut_{\calg_i}(x_i))}M =
M(x_i)^{\aut_{\calg_i}(x_i)},\] as claimed. The claim for colimits
follows at once by duality. The conclusions for discrete groupoids
are clear.\end{proof}

We are now ready to describe the right Kan Extension of a module
along the inclusion $\iota\colon\call_H\to\call$. To make notation
less cumbersome, we will use the symbol $\widebar{g}$ to denote a left
coset $gH\in \Gamma/H$. Also for any morphism $\varphi$ in $\call$,
let $t_\varphi\defeq \widehat{\Theta}(\varphi)\in\Gamma$.

\newcommand{\rcos}[1]{\widebar{#1}}
\newcommand{\alphag}[1]{\sigma_{#1}}

\begin{lemma}\label{prop:RiMdescriptionR}
Let $M\in\call_H\mod$ be any functor, and fix a section
$\sigma\colon \Gamma/H\to\aut_\call(S)$ of the projection. Let
$\alphag{g}$ denote $\sigma(\rcos{g})$. Then the right Kan extension
of $M$ to $\call$ can be described as follows.
\begin{enumerate}
\item[(a)] For each $P\in\call_H$, there is an isomorphism
\[\Phi_P^\sigma \colon R_\iota (M)(P)\xrightarrow{\cong}
\prod_{\rcos{g}\in \Gamma/H}M(P^{\alphag{g}^{-1}}), \] given by the
formula
\[(\Phi_P^\sigma(\{x_{(Q,\beta)}\}_{(Q,\beta)\in P\da\iota}))_{\rcos{g}} =
x_{(P^{\alphag{g}^{-1}}, \alphag{g}^{-1})}.\] In other words, $\Phi_P^\sigma$
takes a compatible family
\[\{x_{(Q,\beta)}\}_{(Q,\beta)\in P\da\iota} \in R_\iota(M)(P) \defeq \lim_{P\da\iota}
M_\sharp \leq \prod_{(Q,\beta)\in P\downarrow\iota}M(Q) \] to the
element in $\prod_{\rcos{g}\in \Gamma/H}M(P^{\alphag{g}^{-1}})$ whose
$\rcos{g}$-th coordinate is $x_{(P^{\alphag{g}^{-1}}, \alphag{g}^{-1})}$.
\item[(b)] If $\tau \colon \Gamma/H\to\aut_\call(S)$ is another section, then
one has a commutative diagram
\[ \xymatrix{
 && \prod_{\rcos{g}\in \Gamma/H}M(P^{\alphag{g}^{-1}}) \ar[dd]^{\prod M(\tau_g^{-1}
 \circ\alphag{g})}_{\cong}  \\
 R_\iota (M)(P) \ar[urr]^{\cong}_{\Phi_P^\sigma} \ar[drr]_{\cong}^{\Phi_P^{\tau}}& \\
 && \prod_{\rcos{g}\in \Gamma/H}M(P^{\tau_g^{-1}}).
}
\]

\item[(c)]  Let $\varphi\colon P\to Q$ be a morphism in $\call$. Then,
under the identification in part (a), the map
  \[R_\iota (M)(\varphi) \colon R_\iota M(P) \to R_\iota M(Q)\]
sends
   \[(x_{g})_{\rcos{g}\in  \Gamma/H} \in \prod_{\rcos{g}\in  \Gamma/H} M(P^{\alphag{g}^{-1}})\]
to
\[\left\{M(\alphag{g}^{-1}\circ \varphi \circ
\alphag{\widehat{\Theta}(\varphi)^{-1}g})(x_{\widehat{\Theta}(\varphi)^{-1}g})\right\}_{\rcos{g}
\in \Gamma/H} \in \prod_{\rcos{g}\in \Gamma/H}
M(Q^{\alphag{g}^{-1}}).\]
\end{enumerate}
\end{lemma}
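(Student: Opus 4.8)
The plan is to deduce all three parts from the description of the overcategory $P\da\iota$ obtained in Lemmas~\ref{RetractionR} and \ref{CH(P)-groupoid}, combined with the computation of limits over a discrete groupoid from Lemma~\ref{groupoid}. For part~(a), Lemma~\ref{RetractionR} shows that the inclusion $j\colon\calg_H(P)\hookrightarrow P\da\iota$ is a left deformation retract, so Lemma~\ref{lem:DefRetLimits}(a) (whose proof establishes that such a $j$ is left cofinal) gives a natural isomorphism $R_\iota(M)(P)=\lim_{P\da\iota}M_\sharp\cong\lim_{\calg_H(P)}j^*M_\sharp$. Since $\calg_H(P)$ is a discrete groupoid whose components are indexed by $\Gamma/H$ through $\delta_P$ (Lemmas~\ref{RetractionR} and \ref{CH(P)-groupoid}), Lemma~\ref{groupoid} rewrites this limit as the product of the values of $M_\sharp$ at one object chosen in each component. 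Now $(P^{\alphag{g}^{-1}},\alphag{g}^{-1})$ lies in the component indexed by $\rcos g$, because $\delta_P(P^{\alphag{g}^{-1}},\alphag{g}^{-1})=\widehat{\Theta}(\alphag{g})H=\rcos g$, using that $\sigma$ is a section of $\aut_\call(S)\to\Gamma\to\Gamma/H$. Taking these as the chosen objects, the resulting isomorphism is exactly the map $\Phi_P^\sigma$ in the statement: its value on a compatible family is the coordinate of that family at the chosen object, which is the asserted formula.

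For part~(b), the representatives $(P^{\alphag{g}^{-1}},\alphag{g}^{-1})$ and $(P^{\tau_g^{-1}},\tau_g^{-1})$ produced by the two sections both lie in the component of $\calg_H(P)$ indexed by $\rcos g$, hence are joined there by a unique morphism, whose underlying $\call$-morphism is $\tau_g^{-1}\circ\alphag{g}\colon P^{\alphag{g}^{-1}}\to P^{\tau_g^{-1}}$; this lies in $\call_H$ because $\widehat{\Theta}(\alphag{g})$ and $\widehat{\Theta}(\tau_g)$ both lie in the coset $\rcos g$, so $\widehat{\Theta}(\tau_g^{-1}\circ\alphag{g})\in H$. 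A compatible family over a groupoid is transported by its morphisms, so its coordinate at $(P^{\tau_g^{-1}},\tau_g^{-1})$ is $M(\tau_g^{-1}\circ\alphag{g})$ of its coordinate at $(P^{\alphag{g}^{-1}},\alphag{g}^{-1})$; assembling over $\rcos g\in\Gamma/H$ gives the asserted commutative triangle.

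For part~(c), a morphism $\varphi\colon P\to Q$ in $\call$ induces the precomposition functor $Q\da\iota\to P\da\iota$, $(Q',\beta)\mapsto(Q',\beta\circ\varphi)$, and $R_\iota(M)(\varphi)$ is the map it induces on limits. Reading this through $\Phi_Q^\sigma$, the $\rcos g$-coordinate of the image of a compatible family $\{x_{(Q',\beta)}\}$ is its value at the object $(Q^{\alphag{g}^{-1}},\,\alphag{g}^{-1}\circ\varphi)$ of $P\da\iota$. Setting $h=\widehat{\Theta}(\varphi)^{-1}g$, I would then verify that the restriction of $\alphag{g}^{-1}\circ\varphi\circ\alphag{h}$ defines a morphism
\[ \psi\colon (P^{\alphag{h}^{-1}},\alphag{h}^{-1})\longrightarrow(Q^{\alphag{g}^{-1}},\,\alphag{g}^{-1}\circ\varphi)\quad\text{in } P\da\iota, \]
i.e.\ that $\psi\circ\alphag{h}^{-1}=\alphag{g}^{-1}\circ\varphi$ and that $\widehat{\Theta}(\alphag{g}^{-1}\circ\varphi\circ\alphag{h})\in H$, the latter holding exactly for this coset of $h$. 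Compatibility of the family along $\psi$ then shows that its coordinate at $(Q^{\alphag{g}^{-1}},\,\alphag{g}^{-1}\circ\varphi)$ equals $M(\alphag{g}^{-1}\circ\varphi\circ\alphag{h})$ applied to $x_{(P^{\alphag{h}^{-1}},\alphag{h}^{-1})}=x_h$, which is precisely the formula in the statement.

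The only step that is more than formal bookkeeping is the last one: identifying $h=\widehat{\Theta}(\varphi)^{-1}g$ as the coset for which $(Q^{\alphag{g}^{-1}},\,\alphag{g}^{-1}\circ\varphi)$ lands in the same component of $\calg_H(P)$ as $(P^{\alphag{h}^{-1}},\alphag{h}^{-1})$, and checking that the connecting composite $\alphag{g}^{-1}\circ\varphi\circ\alphag{h}$ is genuinely a morphism of $\call_H$. Both come down to a computation with $\widehat{\Theta}$ and the section $\sigma$ --- whose existence uses the surjectivity of $\widehat{\Theta}\colon\aut_\call(S)\to\Gamma$ from Theorem~\ref{Classification-summary}(v) --- and need some care with the transporter and composition conventions for morphisms in $\call$. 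Everything else is a direct assembly of Lemmas~\ref{RetractionR}, \ref{CH(P)-groupoid} and \ref{groupoid}.
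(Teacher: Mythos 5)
Your argument is correct and follows the same route as the paper: identify the limit over $P\da\iota$ with a limit over the discrete groupoid $\calg_H(P)$ via the left deformation retract of Lemma~\ref{RetractionR}, use Lemma~\ref{CH(P)-groupoid} to index components by $\Gamma/H$, pick the representatives $(P^{\alphag{g}^{-1}},\alphag{g}^{-1})$ furnished by the section, and read off (b) and (c) by transporting along connecting morphisms in $\calg_H(P)$ and the precomposition functor $Q\da\iota\to P\da\iota$. The paper phrases (c) by writing out $(\Phi_P^\sigma)^{-1}$ explicitly via the retraction $r_P$ and composing three maps, but this amounts to the same coordinate bookkeeping you carry out through the connecting morphism $\psi$, so the difference is purely presentational.
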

\begin{proof}
For each object $P\in\call$, the section $\sigma$ defines an
equivalence of categories $t_P^\sigma\colon \Gamma/H\to \calg_H(P)$
which takes an object $gH$ to $(P^{\sigma_g^{-1}},\sigma_g^{-1})$, and is a
right inverse for $\delta_P\colon \calg_H(P)\to \Gamma/H$ of Lemma
\ref{CH(P)-groupoid}. Let $j_P\colon\calg_H(P)\to P\downarrow\iota$
and $r_P\colon P\downarrow\iota\to \calg_H(P)$ denote the inclusion
and retraction of Lemma \ref{RetractionR} respectively.

The map $\Phi_P^\sigma$ is the map induced by the composite
$j_P\circ t_P^\sigma\colon \Gamma/H\to P\da\iota,$
\[R_\iota(M)(P)\defeq\lim_{P\da\iota}M_\sharp \xrightarrow{j_P^*}
\lim_{\calg_H(P)}M_\sharp\circ j_P \xrightarrow{(t_P^\sigma)^*} \lim_{\Gamma/H}M_\sharp\circ
j_P\circ t_P^\sigma = \prod_{\rcos{g}\in\Gamma/H}M(P^{\sigma_g^{-1}}).\]
Notice that $j_p\circ t_P^\sigma$ takes a coset $gH\in\Gamma/H$ to
the object $(P^{\sigma_g^{-1}},\sigma_g^{-1})\in P\da\iota$. Thus the formula
of part (a) is clear. Furthermore, the first map in the sequence
above is an isomorphism since $\calg_H(P)$ is a left deformation
retract of $P\da\iota$ by Lemma \ref{RetractionR}, and the second is
an isomorphism since $t_P^\sigma$ is an equivalence of groupoids.

For Part (b), notice that the morphisms
$\tau_g^{-1}\circ\sigma_g$ define a natural isomorphism from
$t_P^\sigma$ to $t_P^{\tau}$, and hence a natural isomorphism
$j_P\circ t_P^{\sigma}\to j_P\circ t_P^{\tau}$. Thus
$\Phi_P^\sigma$ and $\Phi_P^{\tau}$ are naturally isomorphic, and
(b) is nothing but a diagramatic realization of this natural
isomorphism.

To prove part (c), notice first that for each $P\in\call$ the
composite functor $\delta_P\circ r_P$ is a left inverse to $j_P\circ
t_P^\sigma$, and since the latter induces the isomorphism
$\Phi_P^\sigma$ of part (a), $\delta_P\circ r_P$ induces
$(\Phi_P^\sigma)^{-1}$. Thus to prove the statement we need to
calculate the effect of $\Phi_Q^\sigma\circ
R_\iota(M)(\varphi)\circ(\Phi_P^\sigma)^{-1}$ on a typical element
$\{x_g\}_{\widebar{g}\in\Gamma/H}\in\prod_{\rcos{g}\in\Gamma/H}M(P^{\sigma_g^{-1}})$.
By direct calculation one has for $(R,\beta)\in P\da\iota$,
$(T,\gamma)\in Q\da\iota$, and $u\in\Gamma$,
\begin{itemize}
  \item $((\Phi_P^\sigma)^{-1}(\{x_g\}_{\widebar{g}\in\Gamma/H}))_{(R,\beta)}
  =
M(\beta\sigma_{\widehat{\Theta}(\beta)^{-1}})(x_{\widehat{\Theta}(\beta)^{-1}})$
  \item $(R_\iota(M)(\varphi)(\{M(\beta\sigma_{\widehat{\Theta}(\beta)^{-1}})(x_{\widehat{\Theta}(\beta)^{-1}})\}_{(R,\beta)}))_{(T,\gamma)}
  =
M(\gamma\varphi\sigma_{\widehat{\Theta}(\gamma\varphi)^{-1}})(x_{\widehat{\Theta})(\gamma\varphi)^{-1}})$
  \item $(\Phi_Q^\sigma(\{M(\gamma\varphi\sigma_{\widehat{\Theta}(\gamma\varphi)^{-1}})
(x_{\widehat{\Theta}(\gamma\varphi)^{-1}})\}_{(T,\gamma)}))_{\widebar{u}}
  =
M(\sigma_{u}^{-1} \varphi\sigma_{\widehat{\Theta}(\sigma_{u}^{-1}\varphi)^{-1}})
(x_{\widehat{\Theta}(\sigma_{u}^{-1}\varphi)^{-1}}),$
\end{itemize}
where $\sigma_{u}^{-1}$ represents the restricted map $Q  \xrightarrow{\sigma_{u}^{-1}} Q^{\sigma_{u}^{-1}}$.
Since $\widehat{\Theta}(\sigma_{u}^{-1}\varphi)^{-1} = \widehat{\Theta}(\varphi)^{-1}u$, we have
\[(\Phi_Q^\sigma\circ
R_\iota(M)(\varphi)\circ(\Phi_P^\sigma)^{-1}(\{x_g\}_{\widebar{g}\in\Gamma/H}))_{\widebar{u}}
= M(\sigma_u^{-1} \circ \varphi \circ \sigma_{\widehat{\Theta}(\varphi)^{-1}u})
(x_{\widehat{\Theta}(\varphi)^{-1}u}),\] as claimed.
\end{proof}

The following is an immediate corollary of  Lemmas \ref{prop:RiMdescriptionR} and \ref{lem:Shapiro}.

\begin{corollary}\label{iota*preserves-projectives} The right Kan extension functor
\[R_\iota \colon \call_H\mod \rightarrow \call\mod  \]
is exact. Thus, the Shapiro map
\[ \Sh_M \colon H^{*}(\call, R_\iota(M)) \rightarrow H^{*}(\call_H, M)  \]
is an isomorphism for every $M \in \call_H \mod$.
\end{corollary}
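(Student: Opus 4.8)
The plan is to read exactness of $R_\iota$ directly off the explicit description furnished by Lemma \ref{prop:RiMdescriptionR}, and then to invoke the Shapiro Lemma. First I would fix once and for all a section $\sigma\colon\Gamma/H\to\aut_\call(S)$ of the projection, as in Lemma \ref{prop:RiMdescriptionR}, so that for every $P\in\call_H$ we have an isomorphism
\[\Phi_P^\sigma\colon R_\iota(M)(P)\xrightarrow{\ \cong\ }\prod_{\rcos{g}\in\Gamma/H}M(P^{\alphag{g}^{-1}}).\]
The point I would emphasize is that this isomorphism is natural in the coefficient module $M$: it is constructed as the composite $j_P^*\circ(t_P^\sigma)^*$ of restriction maps along the two fixed functors $j_P\colon\calg_H(P)\to P\da\iota$ and $t_P^\sigma\colon\Gamma/H\to\calg_H(P)$, and restricting a limit along a fixed functor is a natural operation in the functor being restricted. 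Moreover, since $\Gamma$ is finite (we are in the index-prime-to-$p$ setting, so in particular $\Gamma/H$ is a finite set), the right-hand side is a \emph{finite} product of evaluation functors $M\mapsto M(P^{\alphag{g}^{-1}})$.

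Next I would deduce exactness. Being a right adjoint, $R_\iota$ is automatically left exact, so it suffices to check that it preserves epimorphisms; in fact the full statement is just as easy. Given a short exact sequence $0\to M'\to M\to M''\to 0$ in $\call_H\mod$, exactness is tested objectwise, so for each $P\in\call_H$ and each $\rcos{g}\in\Gamma/H$ the sequence $0\to M'(P^{\alphag{g}^{-1}})\to M(P^{\alphag{g}^{-1}})\to M''(P^{\alphag{g}^{-1}})\to 0$ is exact. A finite product of exact sequences of abelian groups is exact, and by the naturality of $\Phi^\sigma_{(-)}$ noted above, the objectwise sequence $0\to R_\iota(M')(P)\to R_\iota(M)(P)\to R_\iota(M'')(P)\to 0$ is isomorphic to the product of these. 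Hence $0\to R_\iota(M')\to R_\iota(M)\to R_\iota(M'')\to 0$ is exact in $\call\mod$, proving that $R_\iota$ is exact.

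Finally, with $R_\iota$ exact, the Shapiro Lemma (Lemma \ref{lem:Shapiro}) applies with $\calc=\call_H$ and $\cald=\call$, showing that the Shapiro transformation $\Sh\colon H^*(\call, R_\iota(-))\to H^*(\call_H,-)$ is a natural isomorphism; in particular $\Sh_M$ is an isomorphism for every $M\in\call_H\mod$. I do not expect any genuine obstacle here: the only step requiring a moment's care is the naturality of the identification in Lemma \ref{prop:RiMdescriptionR} in the coefficient variable, which is immediate from its construction, together with the finiteness of $\Gamma/H$, which is exactly what the hypothesis ``index prime to $p$'' supplies.
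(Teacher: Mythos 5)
Your proof is correct and follows exactly the route the paper has in mind, spelling out the two checks that make the corollary ``immediate'' from Lemma \ref{prop:RiMdescriptionR} and Lemma \ref{lem:Shapiro}: that the identification $\Phi_P^\sigma$ is natural in the coefficient module $M$ (being built from restriction along fixed functors), and that the resulting objectwise description of $R_\iota$ as a product of evaluation functors is exact. A minor remark: the finiteness of $\Gamma/H$ is not actually needed for the exactness step, since arbitrary products in $\R\mod$ are exact, though invoking it does no harm.
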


There is a dual version of both Lemma \ref{prop:RiMdescriptionR}, describing
the left Kan extension $L_\iota M$, and Corollary \ref{iota*preserves-projectives},
showing that $L_\iota$ is exact and hence $\iota^*$ preserves injectives and
the Shapiro lemma holds in homology. We will not state these results explicitly,
but we draw the following corollary.

\begin{corollary} \label{iota*preserves-resolutions}
The restriction functor $\iota^*\colon\call\mod\to\call_H\mod$ preserves both
injective and projective resolutions.
\end{corollary}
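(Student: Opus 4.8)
The plan is to deduce the corollary formally from the two exactness statements already established, together with the fact that $\iota^*$ is itself exact. Recall from Subsection \ref{sub:KanShapiro} that $\iota^*\colon\call\mod\to\call_H\mod$ is exact and admits both a right adjoint $R_\iota$ and a left adjoint $L_\iota$; by Corollary \ref{iota*preserves-projectives} the functor $R_\iota$ is exact, and by the dual of that corollary (noted immediately before the present statement, where it is observed in particular that $\iota^*$ preserves injectives) the functor $L_\iota$ is exact as well.

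First I would check that $\iota^*$ sends projective objects to projective objects. Given a projective $P\in\call\mod$, the adjunction $\iota^*\dashv R_\iota$ yields a natural isomorphism
\[\hom_{\call_H\mod}(\iota^*P,-)\;\cong\;\hom_{\call\mod}(P,-)\circ R_\iota,\]
whose right-hand side is exact, being a composite of the exact functor $R_\iota$ with the exact functor $\hom_{\call\mod}(P,-)$; hence $\iota^*P$ is projective. Dually, using $L_\iota\dashv\iota^*$ one gets, for an injective $I\in\call\mod$, a natural isomorphism $\hom_{\call_H\mod}(-,\iota^*I)\cong\hom_{\call\mod}(-,I)\circ L_\iota$, which is exact because $L_\iota$ is exact and $\hom_{\call\mod}(-,I)$ is exact; so $\iota^*I$ is injective.

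Finally I would conclude as follows: given a projective resolution $P_\bullet\to M$ in $\call\mod$, the complex $\iota^*P_\bullet\to\iota^*M$ is exact since $\iota^*$ is exact, and each $\iota^*P_i$ is projective by the previous step, so $\iota^*P_\bullet\to\iota^*M$ is a projective resolution of $\iota^*M$; the argument for injective resolutions is entirely analogous. There is no real obstacle here — the one subtlety worth recording is that preservation of projective (resp.\ injective) \emph{objects} does not by itself give the statement, since one also needs the exactness of $\iota^*$ to know that the resolution, viewed as an exact complex, is carried to an exact complex.
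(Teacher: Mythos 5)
Your proof is correct and follows the same route the paper has in mind: exactness of $R_\iota$ (Corollary~\ref{iota*preserves-projectives}) gives that $\iota^*$ preserves projectives via the adjunction $\iota^*\dashv R_\iota$, the dual argument with $L_\iota\dashv\iota^*$ gives preservation of injectives, and exactness of $\iota^*$ itself carries exact complexes to exact complexes, so resolutions go to resolutions. The paper leaves all of this implicit in the sentence preceding the corollary; your writeup, in particular the closing remark that preservation of projective/injective objects alone does not suffice without exactness of $\iota^*$, is a clean and accurate unpacking of that.
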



\section{Construction of the transfer for subsystems of index prime to $p$}

Throughout this section let $\Gamma=\Gamma_{p'}(\calf)$, fix a
subgroup $H\le\Gamma$, and let $\iota\colon \call_H\to \call$ be
the inclusion. Let $\sigma \colon \Gamma/H\to\aut_\call(S)$ be a
fixed section. As before, for $g\in\Gamma$ we denote by $\rcos{g}$
the left coset $gH\in \Gamma/H$, and let $\sigma_{g}$ denote
$\sigma(\rcos{g})$.

\begin{proposition}\label{Pretransfer}
Let $M \in \call \mod$. For each \mbox{$P \in
\call$}, let
\[\PreTr_P \colon R_{\iota}(\iota^*M)(P) \longrightarrow M(P) \]
be the map that, under the identification of Lemma
\ref{prop:RiMdescriptionR}, is given by
\[(x_g)_{\rcos{g} \in \Gamma/H} \mapsto \sum_{\rcos{g}
\in \Gamma/H} M(\alphag{g})(x_g).\] Then, the maps $\PreTr_P$
are independent of the choice of the section $\sigma$, and assemble
into a natural transformation
\[\PreTr\colon R_{\iota}(\iota^*M) \Rightarrow M \]
\end{proposition}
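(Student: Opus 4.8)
The plan is to observe that, by construction, each $\PreTr_P$ is the composite of the isomorphism $\Phi_P^\sigma\colon R_\iota(\iota^*M)(P)\xrightarrow{\cong}\prod_{\rcos{g}\in\Gamma/H}M(P^{\sigma_g^{-1}})$ of Lemma \ref{prop:RiMdescriptionR}(a) (applied to the $\call_H$-module $\iota^*M$, so that $(\iota^*M)(P^{\sigma_g^{-1}}) = M(P^{\sigma_g^{-1}})$) with the \emph{summation} map $(x_g)_{\rcos g}\mapsto\sum_{\rcos g}M(\sigma_g)(x_g)$, where $M(\sigma_g)$ denotes $M$ applied to the restriction-inverse isomorphism $\sigma_g\colon P^{\sigma_g^{-1}}\to P$. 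In particular $\PreTr_P$ is visibly a homomorphism of $\R$-modules, and the proposition reduces to the two compatibility claims, each of which I would prove by feeding the relevant part of Lemma \ref{prop:RiMdescriptionR} into this formula.

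For independence of the section, let $\tau\colon\Gamma/H\to\aut_\call(S)$ be a second section. First I would note that $\widehat{\Theta}(\sigma_g)$ and $\widehat{\Theta}(\tau_g)$ lie in the same coset $\rcos g$, so $\widehat{\Theta}(\tau_g^{-1}\circ\sigma_g)\in H$ and hence $\tau_g^{-1}\circ\sigma_g\colon P^{\sigma_g^{-1}}\to P^{\tau_g^{-1}}$ is a morphism in $\call_H$; this is what makes Lemma \ref{prop:RiMdescriptionR}(b) applicable to $\iota^*M$, giving $\Phi_P^\tau = \bigl(\prod_{\rcos g}M(\tau_g^{-1}\circ\sigma_g)\bigr)\circ\Phi_P^\sigma$. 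Then the $\tau$-summation composed with this correction sends a family $(x_g)_{\rcos g}$ to $\sum_{\rcos g}M(\tau_g)\bigl(M(\tau_g^{-1}\circ\sigma_g)(x_g)\bigr) = \sum_{\rcos g}M(\tau_g\circ\tau_g^{-1}\circ\sigma_g)(x_g) = \sum_{\rcos g}M(\sigma_g)(x_g)$, which is exactly the $\sigma$-summation. Hence the $\tau$- and $\sigma$-versions of $\PreTr_P$ coincide.

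For naturality, fix a morphism $\varphi\colon P\to Q$ in $\call$ and set $t = t_\varphi = \widehat{\Theta}(\varphi)$. I would substitute the explicit description of $R_\iota(\iota^*M)(\varphi)$ from Lemma \ref{prop:RiMdescriptionR}(c) into $\PreTr_Q$: on a family $(x_g)_{\rcos g}$ the composite $\PreTr_Q\circ R_\iota(\iota^*M)(\varphi)$ yields $\sum_{\rcos g}M(\sigma_g)\,M(\sigma_g^{-1}\circ\varphi\circ\sigma_{t^{-1}g})(x_{t^{-1}g})$, which simplifies, using $M(\sigma_g)\circ M(\sigma_g^{-1}) = \Id$ and covariance of $M$, to $\sum_{\rcos g}M(\varphi)\,M(\sigma_{t^{-1}g})(x_{t^{-1}g})$. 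Reindexing the sum along the bijection $\rcos g\mapsto\overline{t^{-1}g}$ of $\Gamma/H$ (left translation permutes cosets) rewrites this as $M(\varphi)\bigl(\sum_{\rcos g}M(\sigma_g)(x_g)\bigr) = M(\varphi)\circ\PreTr_P$, which is the required commuting square; thus the maps $\PreTr_P$ assemble into a natural transformation $R_\iota(\iota^*M)\Rightarrow M$.

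I do not expect a genuine obstacle here: both claims are formal consequences of Lemma \ref{prop:RiMdescriptionR} together with the functoriality of $M$. The two points I would take care over are (i) checking that the correction morphisms $\tau_g^{-1}\circ\sigma_g$ genuinely lie in $\call_H$, so that part (b) of that lemma — stated for $\call_H$-modules — applies to $\iota^*M$; and (ii) keeping straight the composition conventions of $\call$ (morphisms written on the right on objects, but $M$ covariant and composing in the usual order) together with the left-translation reindexing of $\Gamma/H$ in the naturality step. Everything else is mechanical.
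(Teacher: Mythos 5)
Your proof is correct and follows essentially the same route as the paper: both use Lemma \ref{prop:RiMdescriptionR}(b) to compare the two identifications $\Phi_P^\sigma$, $\Phi_P^\tau$ for section independence, and Lemma \ref{prop:RiMdescriptionR}(c) together with a reindexing of the sum over $\Gamma/H$ for naturality. The only difference is cosmetic --- you spell out the reindexing $\rcos g\mapsto\overline{t^{-1}g}$ and the check that the correction morphisms $\tau_g^{-1}\circ\sigma_g$ lie in $\call_H$, both of which the paper leaves implicit.
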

\begin{proof}
Let $\tau$ be another section. Then by Part (b) of Lemma
\ref{prop:RiMdescriptionR} (omitting $\iota^*$ from the notation),
\[\Phi_P^{\tau} = \left(\prod M(\tau_{\rcos{g}}^{-1}  \circ\sigma_{\rcos{g}})\right)\circ
\Phi_P^\sigma.\] Let $x\in R_\iota(\iota^*M)(P)$, and let
$(x_{g})_{\rcos{g}}$ and $(x'_{g})_{\rcos{g}}$ denote
$\Phi_P^\sigma(x)$ and $\Phi_P^{\tau}(x)$ respectively. Then
\[(x'_{g})_{\rcos{g}} = (M(\tau_{g}^{-1}\circ\sigma_{g})(x_{g}))_{\rcos{g}}.\]
Thus using the section $\tau$ to define $\PreTr$ gives the map
\begin{multline*}
(x'_{g})_{\rcos{g} \in \Gamma/H} \mapsto \sum_{\rcos{g} \in
\Gamma/H} M(\tau_g)(x'_g) 
= \sum_{\rcos{g} \in
\Gamma/H}
M(\tau_g)(M(\tau_{g}^{-1}\circ\sigma_{g})(x_{g})) =
\sum_{\rcos{g} \in \Gamma/H} M(\alphag{g})(x_g),
\end{multline*}
which agrees with the map defined using $\sigma$.

It remains to show that $\PreTr$ is natural, meaning that for  $P,Q \in \call$ and $\varphi \in \mor_\call(P,Q)$, the square
\[
\xymatrix{
  {R_\iota(\iota^*M)(P)} \ar[0,2]^{\PreTr_P} \ar[2,0]_{R_\iota(\iota^*M)(\varphi)} & & M(P) \ar[2,0]^{M(\varphi)}\\ \\
  R_\iota(\iota^*M)(Q) \ar[0,2]^{\PreTr_Q } & & M(Q)}
\]
commutes. Let \mbox{$x \in  R_\iota(\iota^*M)(P)$}, and let
\mbox{$\displaystyle (x_{g})_{\rcos{g}} = \Phi_P^\sigma(x)
\in\prod_{\rcos{g}\in \Gamma/H}M(P^{\alphag{g}^{-1}})$}, as in
Lemma \ref{prop:RiMdescriptionR}.  Then
\[M(\varphi) \circ \PreTr_P (x) = \sum_{\rcos{g} \in \Gamma/H} M(\varphi\circ \alphag{g}) (x_{g}),\]
while, by part (c) of Lemma \ref{prop:RiMdescriptionR},
\begin{align*}
 \PreTr_Q \circ R_\iota(\iota^*M)(\varphi)(x)
 &= \sum_{\rcos{g} \in \Gamma/H} M(\alphag{g})\left(M(\alphag{g}^{-1}\circ
    \varphi\circ\alphag{\widehat{\Theta}(\varphi)^{-1}g})
    (x_{\widehat{\Theta}(\varphi)^{-1}g}) \right) \\
 &= \sum_{\rcos{g} \in \Gamma/H}M(\varphi\circ\alphag{\widehat{\Theta}
    (\varphi)^{-1}g})(x_{\widehat{\Theta}(\varphi)^{-1}g}).
\end{align*}
Thus the two ways of composition in the square coincide. \end{proof}

The natural transformation $\PreTr$ induces a map on higher limits,
which we also denote by $\PreTr$. Composing this with the inverse of
the Shapiro isomorphism we obtain our transfer.

\begin{definition}\label{transfer-defi}
For any subgroup $H\le \Gamma$ and a functor $M\in\call\mod$ the associated \emph{transfer}, is the map
\[ \Tr \colon H^*(\call_H, \iota^*M) \xrightarrow{\Sh_M^{-1}}_{\cong} H^*(\call, R_\iota(\iota^*M)) \xrightarrow{\PreTr} H^*(\call ,M). \]
\end{definition}
Since the transfer is defined as a natural transformation of
coefficient systems it is of course independent of the choice of resolutions used to compute cohomology.

\subsection{Cochain-level Description} \label{sub:CochainLevel}
Just as in the classical group case, the transfer map constructed
here can be described at the cochain level as a sum  of conjugation
maps. To this end we must first make sense of conjugation maps
in our current context.

\begin{definition} Let $M$ and $N$ be coefficient systems on $\call$, and
let $K$ be a subgroup of $\Gamma$. Let $\iota_K\colon \call_K\to \call$ denote the inclusion.  For a natural transformation
  \mbox{$\phi \in \hom_{\call_K\mod}( \iota_K^*N,\iota_K^*M)$},
and
  \mbox{$\alpha \in \aut_{\call}(S)$},
let $\phi^\alpha$ be the map which associates with an object
$P\in\call$ the homomorphism of $R$-modules $\phi_P^\alpha\colon
N(P)\to M(P)$ given by
\[ \phi^\alpha_P := M(\alpha) \circ \phi_{\alpha^{-1}(P)} \circ N(\alpha^{-1}) \in \hom_R(N(P),M(P)). \]
\end{definition}

We refer to this construction as conjugation. Notice that
$\phi^\alpha$ needs not be a natural transformation in general. In the case where $(\phi^\alpha)$ is a natural transformation, so $(\phi^\alpha)^\beta$ is defined, it follows immediately from the definition that $(\phi^\alpha)^\beta = \phi^{(\beta \circ \alpha)}$, the order being reversed since we write
$\alpha$ on the right. The following lemma investigates other basic
properties of $\phi^\alpha$, and in particular provides a condition
under which it is a natural transformation.

\begin{lemma} \label{lem:sigma^aNatural} Let $L, M$ and $N$ be coefficient systems on $\call$,
let $K, H \le\Gamma$, and let \mbox{$\alpha \in
\aut_{\call}(S)$}.
\begin{enumerate}
  \item[(a)] If
\[ \iota_K^* N \xrightarrow{\phi} \iota_K^* M \xrightarrow{\eta}\iota_K^* L \]
is a composable sequence in $\call_K\mod$ then
\[(\eta \circ \phi)^\alpha = \eta^\alpha \circ \phi^\alpha. \]
  \item[(b)]
If $\phi \in \hom_{\call_K\mod}(\iota_K^* N,\iota^*_K M) $,
then for every $\alpha \in \aut_{\call_K}(S)$,
  \[ \phi^\alpha = \phi.\]
  \item[(c)]
If
  \mbox{$\widehat{\Theta}(\alpha)^{-1} \in N_\Gamma(H,K)$}
and
  \mbox{$\phi \in \hom_{\call_K\mod}(\iota_K^* N,\iota^*_K M)$},
then $\phi^\alpha$ is a natural transformation
  \mbox{$\iota^*_H N \to \iota^*_H M$},
and the map which associates $\phi^\alpha$ with $\phi$ is  a homomorphism
 \[ c_\alpha^* \colon  \hom_{\call_K\mod}(\iota_K^* N,\iota^*_K M) \longrightarrow \hom_{\call_H\mod}(\iota_H^* N,\iota^*_H M).\]
\end{enumerate}
\end{lemma}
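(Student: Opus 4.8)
The plan is to verify each of the three parts by direct computation from the definition of $\phi^\alpha$, using only the functoriality of the coefficient systems and the defining property (C) of a linking system which relates conjugation in $\call$ to the distinguished monomorphisms. Part (a) is purely formal: unravelling the definition, $(\eta\circ\phi)^\alpha_P = L(\alpha)\circ\eta_{\alpha^{-1}(P)}\circ\phi_{\alpha^{-1}(P)}\circ N(\alpha^{-1})$, and inserting $M(\alpha^{-1})\circ M(\alpha) = \Id$ in the middle splits this as $\eta^\alpha_P\circ\phi^\alpha_P$; this requires no hypotheses beyond $\alpha$ being invertible in $\call$, which holds since $\alpha\in\aut_\call(S)$. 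For part (b), when $\alpha\in\aut_{\call_K}(S)$ one must use that $\phi$ is a natural transformation of functors on $\call_K$: for $P\in\call_K$ the restriction $\alpha_{|\alpha^{-1}(P)}\colon \alpha^{-1}(P)\to P$ is a morphism in $\call_K$, so naturality of $\phi$ gives $\phi_P\circ N(\alpha_{|\alpha^{-1}(P)}) = M(\alpha_{|\alpha^{-1}(P)})\circ\phi_{\alpha^{-1}(P)}$; combining this with the compatible choice of inclusions (so that $M(\alpha) = M(\iota)\circ M(\alpha_{|})$ appropriately, cf. Subsection \ref{sub:InclRes}) collapses $\phi^\alpha_P$ to $\phi_P$.

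For part (c), which is the substantive case, I would argue as follows. Fix $P\in\call$ (viewed as an object of both $\call_H$ and $\call_K$, since these share all objects by Theorem \ref{Classification-summary}). One first checks $\phi^\alpha_P$ is well defined, i.e.\ that $\phi_{\alpha^{-1}(P)}$ makes sense: since $\widehat\Theta(\alpha)^{-1}\in N_\Gamma(H,K)$, conjugating an $\calf_H$-centric subgroup by (a morphism lifting) $\alpha^{-1}$ lands it inside $S_K$, and in fact among the relevant objects of $\call_K$, so $N(\alpha^{-1})$ and $\phi_{\alpha^{-1}(P)}$ are defined. For naturality over $\call_H$, take a morphism $\varphi\colon P\to Q$ in $\call_H$; one must show $M(\varphi)\circ\phi^\alpha_P = \phi^\alpha_Q\circ N(\varphi)$. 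Substituting the definition, this reduces to
\[
M(\varphi)\circ M(\alpha)\circ\phi_{\alpha^{-1}(P)}\circ N(\alpha^{-1}) = M(\alpha)\circ\phi_{\alpha^{-1}(Q)}\circ N(\alpha^{-1})\circ N(\varphi).
\]
Writing $\psi \defeq \alpha^{-1}\circ\varphi\circ\alpha \in \mor_\call(\alpha^{-1}(P),\alpha^{-1}(Q))$ (using that $\alpha$ is invertible), the key point is that $\widehat\Theta(\psi) = \widehat\Theta(\alpha)^{-1}\widehat\Theta(\varphi)\widehat\Theta(\alpha)$ lies in $K$: indeed $\widehat\Theta(\varphi)\in H$ because $\varphi$ is a morphism in $\call_H$, and $\widehat\Theta(\alpha)^{-1}\in N_\Gamma(H,K)$ means precisely $\widehat\Theta(\alpha)^{-1}H\widehat\Theta(\alpha)\subseteq K$. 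Hence $\psi$ is a morphism in $\call_K$, and naturality of $\phi$ over $\call_K$ gives $M(\psi)\circ\phi_{\alpha^{-1}(P)} = \phi_{\alpha^{-1}(Q)}\circ N(\psi)$. Pre- and post-composing with $M(\alpha)$ and $N(\alpha^{-1})$ and using functoriality ($M(\alpha)\circ M(\psi) = M(\varphi)\circ M(\alpha)$, and similarly $N(\psi)\circ N(\alpha^{-1}) = N(\alpha^{-1})\circ N(\varphi)$) yields the displayed identity. Finally, that $\phi\mapsto\phi^\alpha$ is additive, i.e.\ a group homomorphism, is immediate since $M(\alpha)$ and $N(\alpha^{-1})$ are $R$-linear and composition of $R$-module maps is bilinear.

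I expect the main obstacle to be purely bookkeeping: keeping track of the fact that morphisms in $\call$ must be restricted to the appropriate subgroups before one can apply $M$ or $N$ to them, and checking that the subgroup $\alpha^{-1}(P)$ is genuinely an object of $\call_K$ (not merely of $\call$) so that the restriction of $\phi$ to $\call_K$ applies at that object. This is where the hypothesis $\widehat\Theta(\alpha)^{-1}\in N_\Gamma(H,K)$ does its work, together with Theorem \ref{Classification-summary}(ii)--(iii) identifying the objects of $\call_H$ and $\call_K$ inside those of $\call$; once that identification is in hand, the rest of part (c) is the same formal manipulation as in the classical double-coset bookkeeping for group-cohomology transfer.
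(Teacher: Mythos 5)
Your argument reproduces the paper's proof essentially verbatim: part (a) by inserting $M(\alpha^{-1})\circ M(\alpha)=\Id$, part (b) by naturality of $\phi$ at (the restriction of) $\alpha$, and part (c) by the key computation $\widehat\Theta(\alpha^{-1}\circ\varphi\circ\alpha)=\widehat\Theta(\alpha)^{-1}\widehat\Theta(\varphi)\widehat\Theta(\alpha)\in K$ together with naturality of $\phi$ over $\call_K$. One small correction to your closing remarks: the objects of $\call_H$, $\call_K$, and $\call$ coincide already because $S_H=S$ whenever $\Gamma=\Gamma_{p'}(\calf)$, independently of the hypothesis $\widehat\Theta(\alpha)^{-1}\in N_\Gamma(H,K)$; the normalizer condition is needed only to guarantee that $\alpha^{-1}\circ\varphi\circ\alpha$ is a morphism in $\call_K$, exactly as your main argument in fact shows.
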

\begin{proof}
For $P\in\call$ and $\alpha\in\aut_\call(S)$,
\begin{multline*}
  (\eta\circ\phi)^\alpha_P
  \defeq L(\alpha)\circ(\eta\circ\phi)_{\alpha^{-1}(P)}\circ N(\alpha^{-1})=\\
  L(\alpha)\circ\eta_{\alpha^{-1}(P)}\circ M(\alpha^{-1})\circ M(\alpha)\circ \phi_{\alpha^{-1}(P)}\circ N(\alpha^{-1})
  = \eta_P^\alpha\circ\phi_P^\alpha.\end{multline*}
This proves part (a).

Part (b) is immediate, since by assumption
$\phi$ is natural with respect to any morphism in $\call_K$, and in
particular $\alpha$. Therefore, for any object $Q$,
  \[ \phi^\alpha_Q = M(\alpha) \circ \phi_{\alpha^{-1}(Q)} \circ N(\alpha^{-1})
                = M(\alpha) \circ M(\alpha^{-1}) \circ \phi_Q
                = \phi_Q. \]

It remains to prove part (c).
Let \mbox{$\varphi \colon P \to Q$} be a morphism in $\call_H$.
Then \mbox{$\widehat{\Theta}(\varphi) \in H$}, so by assumption on $\alpha$ we have
\[ \widehat{\Theta}(\alpha^{-1} \circ \varphi \circ \alpha) =\widehat{ \Theta}(\alpha)^{-1}
\circ\widehat{\Theta}(\varphi) \circ \widehat{\Theta}(\alpha) \in K, \]
so
\[ \alpha^{-1} \circ \varphi \circ \alpha \in \mor_{\call_K}(\alpha^{-1}(P),\alpha^{-1}(Q)).\]
Thus,
\begin{align*}
  \phi^\alpha_Q \circ N(\varphi)
  &= M(\alpha) \circ \phi_{\alpha^{-1}(Q)} \circ N(\alpha^{-1}) \circ N(\varphi) \\
  &= M(\alpha) \circ \phi_{\alpha^{-1}(Q)} \circ N(\alpha^{-1} \circ \varphi \circ \alpha ) \circ N(\alpha^{-1}) \\
  &= M(\alpha) \circ M(\alpha^{-1} \circ \varphi \circ \alpha ) \circ \phi_{\alpha^{-1}(P)} \circ N(\alpha^{-1}) \\
  &= M(\varphi) \circ M(\alpha ) \circ \phi_{\alpha^{-1}(P)} \circ N(\alpha^{-1}) \\
  &= M(\varphi) \circ \phi^{\alpha}_P.
\end{align*}
This shows that $\phi^\alpha$ is a natural transformation. That the map $c_\alpha^*$ is a
homomorphism follows directly from the definition.
\end{proof}

\begin{remark}\label{consequences of 4.4}
A consequence of part (b) of  Lemma \ref{lem:sigma^aNatural} is that
if $\alpha, \alpha' \in\aut_{\call}(S)$ with
\mbox{$\widehat{\Theta}(\alpha) = \widehat{\Theta}(\alpha')$} then
\mbox{$c_ \alpha ^* = c_{\alpha'}^*$}, since $\alpha ^{-1} \circ
\alpha' \in \aut_{\call_H}(S)$ for any subgroup $H\le\Gamma$.
Therefore we will often write $c_x^*$ for \mbox{$x \in \Gamma$} to
denote the conjugation induced by any \mbox{$\alpha\in
\aut_{\call}(S)$} with \mbox{$\widehat{\Theta}(\alpha) = x$}, and
write $\phi^x$ for $c^*_x(\phi)$. When restricted to
$\hom_{\call_H\mod}(\iota_H^* N,\iota^*_H M)$, the conjugation $c^*_
\alpha $ only depends on the coset $\widehat{\Theta}(\alpha)H$.
Therefore we will also write $c_x^*$ and $\phi^x$ for
\mbox{$\rcos{x} \in  \Gamma/H$}. Finally, we also write $c_x^*$ and
$\phi^x$ for \mbox{$\rcos{x} \in K \backslash \Gamma / H$} when the
maps involved are independent of the choice of representative for
double cosets.
\end{remark}

\begin{corollary}\label{conj-res-indep}
Let $M$ be a coefficient system on $\call$ with injective resolution
$M \to I_\bullet$ and let $K, H\le \Gamma$. For $\alpha \in \aut_{\call}(S)$
such that $\widehat{\Theta}(\alpha)^{-1} \in N_\Gamma(H,K)$, the map induced by conjugation
\[ c_\alpha^* \colon  \hom_{\call_K\mod}(\iota_K^* \R ,\iota^*_K I_\bullet)
\longrightarrow \hom_{\call_H\mod}(\iota_H^* \R ,\iota^*_H I_\bullet) \]
is a map of cochain complexes, and the induced map on cohomology
\[c_\alpha^* \colon H^{*}(\call_K, \iota_K^*M)  \longrightarrow H^{*}(\call_H, \iota_H^*M).\]
is independent of the choice of the injective resolution
$I_\bullet$.
\end{corollary}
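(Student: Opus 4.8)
The plan is to check the cochain-level assertion directly from Lemma~\ref{lem:sigma^aNatural}, and then to deduce independence of the resolution from the standard comparison theorem for injective resolutions. The whole argument hinges on one observation, used twice: the differential of $I_\bullet$, and any comparison map between two resolutions, are natural transformations defined on \emph{all} of $\call$, and conjugating such a natural transformation by an automorphism $\alpha\in\aut_\call(S)$ changes nothing --- even though $\alpha$ itself need not be a morphism of $\call_K$ or of $\call_H$. Indeed, if $\psi$ is a natural transformation between functors on $\call$ and $\alpha\in\aut_\call(S)$, then the objectwise formula defining $\psi^\alpha$, combined with naturality of $\psi$ with respect to the relevant restrictions of $\alpha$, gives $\psi^\alpha_P=\psi_P$ for all $P$; this is precisely the computation in the proof of Lemma~\ref{lem:sigma^aNatural}(b), carried out over $\call$ rather than over a subcategory.

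I would first note that, by Corollary~\ref{iota*preserves-resolutions}, $\iota_K^*I_\bullet$ is an injective resolution of $\iota_K^*M$ in $\call_K\mod$ and $\iota_H^*I_\bullet$ is an injective resolution of $\iota_H^*M$ in $\call_H\mod$, so the two Hom complexes in the statement compute $H^*(\call_K,\iota_K^*M)$ and $H^*(\call_H,\iota_H^*M)$ respectively. Now fix a degree $n$ and $\phi\in\hom_{\call_K\mod}(\iota_K^*\R,\iota_K^*I_n)$. By Lemma~\ref{lem:sigma^aNatural}(c), applied with $N=\R$ and $M=I_n$ and using the hypothesis $\widehat\Theta(\alpha)^{-1}\in N_\Gamma(H,K)$, the family $\phi^\alpha$ is a natural transformation $\iota_H^*\R\to\iota_H^*I_n$, and $c_\alpha^*$ is additive. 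The differential on either Hom complex is post-composition with the appropriate restriction of the differential $d\colon I_\bullet\to I_{\bullet+1}$, so by Lemma~\ref{lem:sigma^aNatural}(a) we get $\big((\iota_K^*d)\circ\phi\big)^\alpha=(\iota_K^*d)^\alpha\circ\phi^\alpha$, while the observation above gives $(\iota_K^*d)^\alpha=\iota_H^*d$. Hence $c_\alpha^*\big((\iota_K^*d)\circ\phi\big)=(\iota_H^*d)\circ c_\alpha^*(\phi)$, so $c_\alpha^*$ is a map of cochain complexes and induces a map $H^*(\call_K,\iota_K^*M)\to H^*(\call_H,\iota_H^*M)$.

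For independence of the resolution, let $M\to I_\bullet$ and $M\to I'_\bullet$ be two injective resolutions in $\call\mod$, and choose by the comparison theorem a chain homotopy equivalence $f\colon I_\bullet\to I'_\bullet$ of complexes in $\call\mod$ lifting $\Id_M$; then $\iota_K^*f$ lifts $\Id_{\iota_K^*M}$ and $\iota_H^*f$ lifts $\Id_{\iota_H^*M}$, and post-composition with these maps induces the canonical identifications on cohomology between the two computations of $H^*(\call_K,\iota_K^*M)$ and of $H^*(\call_H,\iota_H^*M)$. I claim that the square relating $c_\alpha^*$ for $I_\bullet$, $c_\alpha^*$ for $I'_\bullet$, and these two comparison maps commutes already at the cochain level: for $\phi\colon\iota_K^*\R\to\iota_K^*I_\bullet$, Lemma~\ref{lem:sigma^aNatural}(a) gives $\big((\iota_K^*f)\circ\phi\big)^\alpha=(\iota_K^*f)^\alpha\circ\phi^\alpha$, and $(\iota_K^*f)^\alpha=\iota_H^*f$ by the same observation as above, since $f$ is natural over $\call$. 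Passing to cohomology and recalling that the comparison maps realize the canonical identification of cohomology computed with different resolutions, we conclude that the induced map $c_\alpha^*\colon H^*(\call_K,\iota_K^*M)\to H^*(\call_H,\iota_H^*M)$ does not depend on the choice of $I_\bullet$.

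I do not expect a serious obstacle: the only step requiring care is the one isolated at the outset --- that conjugation by $\alpha\in\aut_\call(S)$ fixes any natural transformation that is natural over the full category $\call$, even when $\alpha$ lies in neither $\call_K$ nor $\call_H$ --- and everything else is the routine bookkeeping of comparison of resolutions. Sign conventions play no role here because the source $\R$ is concentrated in degree zero.
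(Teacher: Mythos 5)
Your proof is correct and follows essentially the same approach as the paper's: the cochain-level claim is derived from Lemma~\ref{lem:sigma^aNatural}(a) and the observation (Lemma~\ref{lem:sigma^aNatural}(b) with $K=\Gamma$) that conjugation by $\alpha\in\aut_\call(S)$ fixes any transformation natural over all of $\call$, applied to the differential $\partial$. The paper dismisses the resolution-independence claim as routine, and your argument via a comparison map $f\colon I_\bullet\to I'_\bullet$ with $(\iota_K^*f)^\alpha=\iota_H^*f$ is a correct way to supply those details.
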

\begin{proof}
First recall that $\iota^*_H$ and $\iota^*_K$ preserve injective
resolutions by Corollary \ref{iota*preserves-resolutions}.
Now, denote the differential of $I_\bullet$ by $\partial$. Then, by Lemma
\ref{lem:sigma^aNatural}(b), \mbox{$\partial^\alpha =
\partial$} for any $\alpha\in\aut_\call(S)$. The differentials of the
cochain complexes $\hom_{\call_K\mod}(\iota_K^* \R,\iota^*_K
I_\bullet)$ and $\hom_{\call_H\mod}(\iota_H^* R,\iota^*_H I_\bullet)$ are
both given by composition with $\partial$, and will be denoted
$\delta_K$ and $\delta_H$ respectively. For \mbox{$\phi \in
\hom_{\call_K\mod}(\iota_K^* \R,\iota^*_K I_\bullet)$} we therefore have
\[ c_\alpha \circ \delta_K (\phi) = c_\alpha(\partial \circ \phi) =
\partial^\alpha \circ \phi^\alpha  = \partial \circ \phi^\alpha=
\delta_H\circ c_\alpha (\phi). \] This shows that \mbox{$c_\alpha
\circ \delta_K = \delta_H \circ c_\alpha$}, so $c_\alpha$ is a
cochain map.

The proof of independence on choice of injective resolutions is
routine.
\end{proof}

\begin{lemma} Let $M$ and $N$ be coefficent systems on $\call$, and let
$H\le\Gamma$. If \mbox{$\phi \in
\hom_{\call_H\mod}(\iota_H^*N,\iota_H^*M) $} then
 \[ \sum_{\rcos{g} \in \Gamma/H} \phi^g \in \hom_{\call\mod}(N,M). \]
\end{lemma}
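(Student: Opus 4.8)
The plan is to realize $\sum_{\rcos{g}\in\Gamma/H}\phi^g$ as a composite of natural transformations, so that its naturality becomes automatic. Since here $\Gamma=\Gamma_{p'}(\calf)$, the inclusion $\iota_H$ is just the inclusion $\iota\colon\call_H\to\call$, so $\iota_H^*=\iota^*$. I would fix a section $\sigma\colon\Gamma/H\to\aut_\call(S)$ of the projection and write $\sigma_g=\sigma(\rcos{g})$; by the conventions of Remark \ref{consequences of 4.4}, $\widehat{\Theta}(\sigma_g)H=\rcos{g}$ gives $\phi^g=\phi^{\sigma_g}$ for each coset $\rcos{g}$, so it suffices to prove that $\sum_{\rcos{g}}\phi^{\sigma_g}$ is a natural transformation.

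First I would invoke the adjunction $\iota^*\dashv R_\iota$ (the right Kan extension is right adjoint to restriction) to pass from $\phi\in\hom_{\call_H\mod}(\iota^*N,\iota^*M)$ to the corresponding natural transformation $\widehat\phi\in\hom_{\call\mod}(N,R_\iota(\iota^*M))$. Unwinding unit and counit, $\widehat\phi_P(n)\in R_\iota(\iota^*M)(P)=\lim_{P\da\iota}(\iota^*M)_\sharp$ is, for $n\in N(P)$, the compatible family whose value at an object $(Q,\beta)\in P\da\iota$ is $\phi_Q(N(\beta)(n))\in M(Q)$; that this indeed lies in the limit and is natural in $P$ is a routine check using naturality of $\phi$ over $\call_H$. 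Composing with the natural transformation $\PreTr\colon R_\iota(\iota^*M)\Rightarrow M$ of Proposition \ref{Pretransfer} yields a natural transformation $\PreTr\circ\widehat\phi\colon N\to M$.

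It then remains to identify $\PreTr\circ\widehat\phi$ with $\sum_{\rcos{g}}\phi^{\sigma_g}$, which is a direct computation using the explicit description of $R_\iota(\iota^*M)$. By Lemma \ref{prop:RiMdescriptionR}(a), under the isomorphism $\Phi_P^\sigma$ the element $\widehat\phi_P(n)$ corresponds to the tuple $(x_g)_{\rcos{g}\in\Gamma/H}$ with $x_g=\phi_{P^{\sigma_g^{-1}}}\bigl(N(\sigma_g^{-1})(n)\bigr)$, where $\sigma_g^{-1}$ denotes the restricted isomorphism $P\to P^{\sigma_g^{-1}}$. Applying the formula for $\PreTr_P$ from Proposition \ref{Pretransfer},
\[
\PreTr_P(\widehat\phi_P(n))=\sum_{\rcos{g}\in\Gamma/H}M(\sigma_g)(x_g)=\sum_{\rcos{g}\in\Gamma/H}\bigl(M(\sigma_g)\circ\phi_{\sigma_g^{-1}(P)}\circ N(\sigma_g^{-1})\bigr)(n)=\sum_{\rcos{g}\in\Gamma/H}\phi^{\sigma_g}_P(n),
\]
so $\sum_{\rcos{g}}\phi^g=\PreTr\circ\widehat\phi$ is a natural transformation $N\to M$, as claimed.

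I do not expect a genuine obstacle here; the only care needed is in making the adjoint $\widehat\phi$ explicit and in checking that the restriction $\sigma_g|_{P^{\sigma_g^{-1}}}$ occurring in $\PreTr_P$ is literally the one occurring in the definition of $\phi^{\sigma_g}_P$. One could instead verify naturality of $\sum_{\rcos{g}}\phi^g$ by hand: for $\varphi\colon P\to Q$ in $\call$ one matches the summands of $\sum_{\rcos{g}}M(\varphi)\circ\phi^g_P$ with those of $\sum_{\rcos{g}}\phi^g_Q\circ N(\varphi)$ after reindexing $\Gamma/H$. This is self-contained but fiddlier, since $\varphi$ need not extend to an automorphism of $S$, and it essentially reproves part (c) of Lemma \ref{prop:RiMdescriptionR}; so I would prefer the route through $\PreTr$ above.
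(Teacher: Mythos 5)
Your proof is correct but genuinely different from the paper's. The paper verifies naturality of $\sum_{\rcos{g}}\phi^g$ directly: given $\psi\colon P\to Q$ in $\call$, it introduces the auxiliary morphism $\psi_g=\sigma_g^{-1}\circ\psi\circ\sigma_{\widehat\Theta(\psi)^{-1}g}$ in $\call_H$ and manipulates the two sums term by term, reindexing $\Gamma/H$ by $\widehat\Theta(\psi)^{-1}$. You instead factor the candidate transformation as $\PreTr\circ\widehat\phi$, where $\widehat\phi$ is the $(\iota^*,R_\iota)$-adjoint of $\phi$ and $\PreTr$ is the natural transformation of Proposition \ref{Pretransfer}; naturality is then automatic and the only work is the pointwise identification of $\PreTr\circ\widehat\phi$ with $\sum_{\rcos{g}}\phi^{\sigma_g}$, which you carry out correctly via Lemma \ref{prop:RiMdescriptionR}(a) and the formula for $\PreTr_P$. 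Your computation is in fact exactly what the paper does a bit later (in the unnamed proposition following Definition \ref{TrH}) to show that the cochain-level transfer agrees with the abstract one, there only for $N=\R_\call$; you are essentially proving that compatibility in the generality of an arbitrary $N$ and reading off the present lemma as a byproduct. What the paper's route buys is a self-contained hands-on argument that does not presuppose the explicit form of the adjunction unit; what your route buys is that naturality comes for free and the connection between the cochain formula and the conceptual definition of the transfer is made transparent in one stroke, which slightly streamlines the subsequent proposition. One minor comment: your closing remark that a direct verification ``essentially reproves part (c) of Lemma \ref{prop:RiMdescriptionR}'' is a little off — the paper's direct proof does not invoke part (c), it introduces a new auxiliary morphism; but the spirit of the reindexing is indeed the same.
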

\begin{proof}
We need to show naturality with respect to morphisms in $\call$. So
let \mbox{$\psi \colon P \to Q$} be a morphism in $\call$. Fix a
section $\sigma\colon\Gamma/H \to\aut_{\call}(S)$, and let $\sigma_g$
denote $\sigma(\widebar{g})$, as before. For each $\rcos{g} \in
\Gamma/H$, let $\psi_{g}$ denote the composite
\[ {\sigma_{\widehat{\Theta}(\psi)^{-1}g}^{-1}(P)}
\xrightarrow{\sigma_{\widehat{\Theta}(\psi)^{-1}g}} P \xrightarrow{\psi} Q
\xrightarrow{\sigma_{g}^{-1}} \sigma_{g}^{-1}(Q),   \]
where the appropriate
restrictions on $\sigma_{g}$ and
$\sigma_{\widehat{\Theta}(\psi)^{-1}g}^{-1}$ are understood. Observe that
$\widehat{\Theta}(\psi_{g}) \in H$, so $\psi_{g} \in
\mor_{\call_H}(\sigma_{\widehat{\Theta}(\psi)^{-1}g}^{-1}(P),\sigma_{g}^{-1}(Q))$.

 Now,
\begin{align*}
   \sum_{\rcos{g} \in \Gamma/H} \phi^g_Q \circ N(\psi)
   &= \sum_{\rcos{g} \in \Gamma/H} M(\sigma_{g}) \circ
   \phi_{\sigma_{g}^{-1}(Q)} \circ N(\sigma_{g}^{-1}) \circ N(\psi)  \\
   &= \sum_{\rcos{g} \in \Gamma/H} M(\sigma_{g}) \circ
   \phi_{\sigma_{g}^{-1}(Q)} \circ N(\psi_{g}) \circ N(\sigma_{\widehat{\Theta}(\psi)^{-1}g}^{-1}) \\
   &= \sum_{\rcos{g} \in \Gamma/H} M(\sigma_{g}) \circ M(\psi_{g}) \circ
   \phi_{\sigma_{\widehat{\Theta}(\psi)^{-1}g}^{-1}(P)} \circ N(\sigma_{\widehat{\Theta}(\psi)^{-1}g}^{-1}) \\
   &= \sum_{\rcos{g} \in \Gamma/H} M(\psi) \circ
   M(\sigma_{\widehat{\Theta}(\psi)^{-1}g}) \circ
   \phi_{\sigma_{\widehat{\Theta}(\psi)^{-1}g}^{-1}(P)} \circ
   N(\sigma_{\widehat{\Theta}(\psi)^{-1}g}^{-1}) \\
   &= M(\psi) \sum_{\rcos{g} \in \Gamma/H} \phi^{\widehat{\Theta}(\psi)^{-1}g}_P. \\
   &= M(\psi) \sum_{\rcos{g} \in \Gamma/H} \phi^{g}_P.
\end{align*}
The first and the fifth equalities follows from the definition of
$\phi^g$ (see Remark \ref{consequences of 4.4}), the second and the fourth from the definition $\psi_g$,
the third from naturality of $\phi$, and the sixth is clear. This
shows naturality and proves the claim.
\end{proof}

\begin{definition}\label{TrH}
For a coefficient system $M$ on $\call$, a subgroup $H\le\Gamma$
and an injective resolution $M \to I_\bullet$, the associated
\emph{cochain-level transfer} is the cochain map
\[ \Tr_H \colon \hom_{\call_H}(\iota_H^* \R, \iota_H^*I_\bullet) \longrightarrow \hom_{\call}(\R,I_\bullet) \]
given by
\[ \Tr_H(\phi) = \sum_{\rcos{g} \in \Gamma/H} \phi^{g}.  \]
\end{definition}

Since $Tr_H$ is a sum of conjugations, it follows by Corollary \ref{conj-res-indep} that it  is indeed a cochain map, and that the induced map on cohomology is independent of the
choice of injective resolution.

\begin{proposition} For a coefficient system $M$ on $\call$ and a subgroup $H$ of $\Gamma$, the map
\[H^{*}(\call_H, \iota^*M) \longrightarrow H^{*}(\call, M) \]
induced by the cochain-level transfer $Tr_H$ of Definition \ref{TrH} coincides with the
transfer associated to the same data, as defined in
\ref{transfer-defi}.
\end{proposition}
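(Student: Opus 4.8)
The plan is to compute both maps at the level of cochains, using one and the same injective resolution of $M$, and to check that they literally coincide there.

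\emph{Step 1: realising the transfer of Definition \ref{transfer-defi} on cochains.} Fix an injective resolution $M\to I_\bullet$ in $\call\mod$. By Corollary \ref{iota*preserves-resolutions} the complex $\iota^*M\to\iota^*I_\bullet$ is an injective resolution in $\call_H\mod$, and by Corollary \ref{iota*preserves-projectives} the functor $R_\iota$ is exact, so $R_\iota(\iota^*M)\to R_\iota(\iota^*I_\bullet)$ is an injective resolution of $R_\iota(\iota^*M)$ in $\call\mod$. Running the construction of the Shapiro map with these resolutions (taking $\chi=\Id$, as in the proof of Lemma \ref{lem:Shapiro}) shows that $\Sh^{-1}$ is induced on cohomology by the adjunction isomorphism
\[ \rho\colon \hom_{\call_H\mod}(\R_{\call_H},\iota^*I_\bullet)\xrightarrow{\ \cong\ }\hom_{\call\mod}\bigl(\R_\call, R_\iota(\iota^*I_\bullet)\bigr). \]
The pretransfer of Proposition \ref{Pretransfer} is natural in the coefficient module (a routine check from the explicit formulas of Lemma \ref{prop:RiMdescriptionR} and the additivity of morphisms), so applying it degreewise to $I_\bullet$ yields a cochain map $\PreTr_\bullet\colon R_\iota(\iota^*I_\bullet)\to I_\bullet$ that lifts $\PreTr\colon R_\iota(\iota^*M)\Rightarrow M$. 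Hence the transfer of Definition \ref{transfer-defi} is induced on cohomology by the cochain map
\[ \hom_{\call_H\mod}(\R_{\call_H},\iota^*I_\bullet)\xrightarrow{\ \rho\ }\hom_{\call\mod}\bigl(\R_\call, R_\iota(\iota^*I_\bullet)\bigr)\xrightarrow{\ (\PreTr_\bullet)_*\ }\hom_{\call\mod}(\R_\call, I_\bullet). \]

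\emph{Step 2: identifying this composite with $\Tr_H$.} Fix the section $\sigma$. Let $\phi\in\hom_{\call_H\mod}(\R_{\call_H},\iota^*I_n)$ and $P\in\call$. Since every structure map of the constant functor is the identity, unwinding the adjunction shows that $\rho(\phi)_P(1)\in R_\iota(\iota^*I_n)(P)=\lim_{P\da\iota}(\iota^*I_n)_\sharp$ is the compatible family whose value at an object $(Q,\beta)$ of $P\da\iota$ is $\phi_Q(1)$. Transporting this through the isomorphism $\Phi_P^\sigma$ of Lemma \ref{prop:RiMdescriptionR}(a), its $\rcos g$-coordinate is $\phi_{P^{\sigma_g^{-1}}}(1)$, so the formula of Proposition \ref{Pretransfer} for the pretransfer of $I_n$ gives
\[ \bigl((\PreTr_\bullet)_*\circ\rho\bigr)(\phi)_P(1)\;=\;\sum_{\rcos g\in\Gamma/H} I_n(\sigma_g)\bigl(\phi_{P^{\sigma_g^{-1}}}(1)\bigr). \]
On the other hand, by Remark \ref{consequences of 4.4} we may use $\sigma_g$ as the representative of the coset $\rcos g$ in forming $\phi^g$; then the definition of conjugation, together with the fact that $\R_\call(\sigma_g^{-1})$ is the identity, gives $\phi^g_P(1)=I_n(\sigma_g)\bigl(\phi_{P^{\sigma_g^{-1}}}(1)\bigr)$, so that $\Tr_H(\phi)_P(1)=\sum_{\rcos g}\phi^g_P(1)$ equals the right-hand side above. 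As $P$ and $n$ were arbitrary, the two cochain maps coincide, and therefore so do the maps they induce, $H^*(\call_H,\iota^*M)\to H^*(\call, M)$.

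The only delicate point is the bookkeeping in Step 2: one must keep track of how the adjunction isomorphism $\rho$ interacts with the explicit description $\Phi_P^\sigma$ of the right Kan extension, in particular which coordinate of the limit over $P\da\iota$ feeds into which factor of $\prod_{\rcos g\in\Gamma/H}$. The computation comes out without spurious twists precisely because the constant functor has identity structure maps; everything else is formal.
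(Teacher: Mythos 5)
Your proof is correct and takes essentially the same route as the paper: compute both maps on cochains with a common injective resolution $M \to I_\bullet$, use exactness of $R_\iota$ to identify $\Sh_M^{-1}$ on cochains with the $(\iota^*,R_\iota)$-adjunction isomorphism (with $\chi = \Id$), and unwind the adjunction against $\Phi_P^\sigma$ to match $\PreTr \circ \rho$ with $\sum_{\rcos g}\phi^g$. Your Step~1 is in fact slightly more scrupulous than the paper: you point out explicitly that $\PreTr$ must be natural in the coefficient module for the degreewise $\PreTr_\bullet \colon R_\iota(\iota^*I_\bullet)\to I_\bullet$ to be a cochain lift, a point the paper uses implicitly without comment.
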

\begin{proof}
Let $\iota \colon \call_H\to\call$ denote the inclusion.
If $M \to I_\bullet$ is an injective resolution, then,
by Corollary \ref{iota*preserves-resolutions}, $\iota^* M \to \iota^* I_\bullet$
is an injective resolution.
Thus the Shapiro isomorphism $Sh_M$ is given on the
cochain-level by the
$(\iota^*,R_{\iota})$-adjunction isomorphism
\[ \rho \colon \hom_{\call\mod}(\R,R_\iota(\iota^*I_\bullet))   \xrightarrow{\cong}\hom_{\call_H\mod}(\iota^* \R ,\iota^* I_\bullet ).\]
(cf.~Subsection \ref{sub:KanShapiro}). Hence its inverse $Sh_M^{-1}$  is induced by $\rho^{-1}$,
whose value on \mbox{$\phi \in
\hom_{\call_H\mod}(\iota^*\R,\iota^* I_\bullet ) $} is the
natural transformation which takes an object $Q\in\call$ to the morphism $\rho^{-1}(\phi)_Q$ given by the composite
\[ \R = \R(Q)  
 \xrightarrow{\ \prod\limits_{\rcos{g}\in \Gamma/H}  R(\alphag{g}^{-1}) }
 \prod_{\rcos{g}\in \Gamma/H} \R(Q^{\alphag{g}^{-1}}) \xrightarrow{\prod\limits_{\rcos{g}\in \Gamma/H}
 \phi_{\alphag{g}^{-1}(Q)} } \prod_{\rcos{g}\in \Gamma/H} I_\bullet(Q^{\alphag{g}^{-1}}) \cong  R_\iota(\iota^* I_\bullet)(Q) . \]
Therefore the composite
\[ \hom_{\call_H\mod}(\iota^* \R ,\iota^* I_\bullet  ) \xrightarrow{\rho^{-1}_M}_{\cong} \hom_{\call\mod}(\R ,R_\iota(\iota^* I_\bullet))
\xrightarrow{\PreTr} \hom_{\call}(\R ,I_\bullet), \]
which induces the transfer, is given by
\[ \phi \mapsto \sum_{\rcos{g}\in \Gamma/H} I_\bullet(\alphag{g}) \circ \phi_{\alphag{g}^{-1}(Q)} \circ \R(\alphag{g}^{-1})
              = \sum_{\rcos{g}\in \Gamma/H} \phi^{\alphag{g}} = \Tr_H(\phi). \]
\end{proof}

\subsection{Geometric Interpretation}\label{sub:geometric}
We end this section by verifying that for a locally
constant system of coefficients $M\in\call\mod$, and a subgroup
$\call_H\le\call$ of index prime to $p$, the transfer defined in
this section coincides with the map defined in Section
\ref{Loc-Const-coeff}.

Let $\iota\colon \call_H\to \call$ denote the inclusion. Let $\Gamma
= \pi_1(|\call|, S)$ and $\Gamma' = \pi_1(|\call_H|, S)$. Then, by
Theorem \ref{Classification-summary}(v), the map induced by $\iota$
on nerves is a covering space, up to homotopy, with homotopy fibre
$\Gamma/\Gamma' \cong \Gamma_{p'}(\calf)/H$.  Construct $\pi\colon
\call_{\Gamma'}\to \call$ as in Section
\ref{cov-sp-loc-cosnt-coeff}. Then, by Lemma \ref{categorical
covering}(i), there is a functor $\widehat{\iota}\colon\call_H\to
\call_{\Gamma'}$, such that $\pi\circ\widehat{\iota} = \iota$, and
by Part (ii) of the same lemma, for any locally constant
$M\in\call\mod$, there is a natural isomorphism induced by
$\widehat{\iota}$, \[\widehat{\iota}_*\colon
R_\pi(\pi^*M)\xrightarrow{\cong} R_\iota(\iota^*M).\]

Choose a section $\sigma\colon \Gamma/H\to\aut_\call(S)$ of the
projection, and construct $\Phi^\sigma$ as in
Lemma \ref{prop:RiMdescriptionR}. Then for any $M\in\call\mod$
and $P\in \call$, we claim that the diagram
\[\xymatrix{
R_\pi(\pi^*M)(P) \ar[rr]^{Pr}\ar[d]^{\widehat{\iota}_*} && \prod_{\Gamma/\Gamma'}M(P) \ar[rr]^{\Sigma}\ar[d]^{\prod M(\sigma_g)^{-1}}&& M(P)\ar@{=}[d]\\
R_\iota(\iota^*M)(P) \ar[rr]^{\Phi^\sigma_P}&&
\prod_{\widebar{g}\in\Gamma/\Gamma'}M(P^{\sigma_g^{-1}}) 
\ar[rr]^{\Sigma^{\sigma}} && M(P)
}\]
commutes. Here the map $Pr$ projects a compatible family of elements
in the source onto the coordinates of the respective initial
objects, i.e., to the coordinates of the objects of the form
$((P,g\Gamma'),1_P)$. The map $\Sigma$ is the sum of coordinates,
where $\Sigma^\sigma$ is the twisting of the sum of coordinates map
by the $M(\sigma_g)$, as in the construction of $\PreTr_P$ in
Proposition \ref{Pretransfer}. The right square clearly commutes,
while the verification that the left square commutes only involves
checking all the relevant definitions.

Since  the map $T_P$  of Proposition \ref{RKE along hty covering} is the top  composition, and the map $\PreTr_P$ is the bottom composition, it follows that one has a commutative diagram in cohomology
\[\xymatrix{
H^*(\call_{\Gamma'}, \pi^*M) \ar[rr]^{\cong} \ar[d]^{\widehat{\iota}^*}_\cong && H^*(\call, R_\pi(\pi^*M))  \ar[rr]^{T_*} \ar[d]^{\widehat{\iota}_*}_\cong&& H^*(\call, M) \ar@{=}[d]\\
H^*(\call_H, \iota^*M) \ar[rr]^{\cong} && H^*(\call, R_\iota(\iota^*M)) \ar[rr]^{\PreTr_*} && H^*(\call, M).
}\]
Here the top composition is the transfer associated to the covering
 $|\call_H|\to
|\call|$ by Proposition \ref{plf-subgrps-loc-const-coeff}, while the
bottom composition is our algebraic transfer.


\section{Standard Consequences}

In this final section we show that the transfer constructed here for subsystems of index prime to $p$
satisfies all the standard properties one expects to have in ordinary
group cohomology.  Throughout this section
$\Gamma=\Gamma_{p'}(\calf)$, and we let
$\widehat{\Theta}\colon\call\to \calb(\Gamma)$ denote the
projection functor, as before.

\subsection{Transfer among Subgroups} \label{sub:HKTransfer}
We have defined a transfer associated to the inclusion $\call_H
\subseteq \call$ for a subgroup $H \leq \Gamma$. Just like in the
group case, this construction can be generalized to obtain a
transfer associated to the inclusion $\call_H \subseteq \call_K$ for
subgroups $H \leq K \leq \Gamma$. Indeed, one can simply take
$\call_K$ in place of $\call$ in the construction of the transfer.
The transfer associated to the inclusion $\iota^K_H\colon\call_H
\subseteq \call_K$ can then be described on the cochain level as
follows. Let $M$ be a coefficient system on $\call_K$, and let $M \to I_\bullet$ be an injective resolution of $M$. A cochain-level transfer
  \[ \Tr_H^K \colon \hom_{\call_H}(\iota_H^* \R,(\iota_H^K)^*I_\bullet)
  \longrightarrow \hom_{\call_K}(\iota_K^* \R, I_\bullet) \]
is given by
\[ \Tr_H^K(\phi) = \sum_{\rcos{x} \in K/H} \phi^x,  \]
as in Definition \ref{TrH}. This formula makes it clear that $\Tr_H
= \Tr_H^K \circ \Tr_K$ for subgroups $H \leq K \leq \Gamma$.

\subsection{Normalization}
One of the most basic properties of the standard cohomology transfer is that
the restriction to the cohomology of a subgroup followed by the transfer is
given by multiplication by the index. This is exactly the case in our more general context.

\begin{proposition} \label{prop:Norm} Let $\SFL$ be a $p$-local finite group, and let
$H$ be a subgroup of $\Gamma = \Gamma_{p'}(\calf)$. Let $M$ be a system of coefficients
for $\SFL$. Then the composite
\[H^*(\call, M) \xrightarrow{\Res} H^*(\call_H, \iota^*M)\xrightarrow{Tr} H^*(\call, M)\]
is given by multiplication by the index $|\Gamma\colon H|$.
\end{proposition}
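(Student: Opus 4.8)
The plan is to reduce the statement to a cochain-level computation using the explicit cochain description of both the restriction map $\Res$ and the transfer $\Tr_H$. Fix an injective resolution $M \to I_\bullet$ in $\call\mod$; by Corollary \ref{iota*preserves-resolutions}, $\iota^* M \to \iota^* I_\bullet$ is an injective resolution in $\call_H\mod$. Then $H^*(\call, M)$ is computed by the cochain complex $\hom_{\call\mod}(\R, I_\bullet)$, and $H^*(\call_H, \iota^*M)$ by $\hom_{\call_H\mod}(\iota_H^*\R, \iota_H^* I_\bullet)$. The restriction map on the cochain level is induced by $\iota_H^*$, which simply sends a natural transformation $\phi \colon \R \to I_\bullet$ to its restriction $\iota_H^*\phi$ to $\call_H$, and the transfer $\Tr_H$ is given by the formula of Definition \ref{TrH}, $\Tr_H(\psi) = \sum_{\rcos{g}\in\Gamma/H}\psi^g$.

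The core computation is then purely formal: for a cocycle $\phi \in \hom_{\call\mod}(\R, I_\bullet)$ one must evaluate $\Tr_H(\iota_H^*\phi) = \sum_{\rcos{g}\in\Gamma/H}(\iota_H^*\phi)^g$. The key point is that since $\phi$ is already a natural transformation on \emph{all} of $\call$ (not just $\call_H$), it is natural with respect to every $\alpha \in \aut_\call(S)$, and hence by the same argument as in Lemma \ref{lem:sigma^aNatural}(b) one has $(\iota_H^*\phi)^\alpha = \phi$ for \emph{every} $\alpha\in\aut_\call(S)$ — not merely for $\alpha\in\aut_{\call_H}(S)$. Concretely, for $P\in\call$ and $\alpha\in\aut_\call(S)$,
\[
(\iota_H^*\phi)^\alpha_P = I_\bullet(\alpha)\circ\phi_{\alpha^{-1}(P)}\circ\R(\alpha^{-1}) = I_\bullet(\alpha)\circ I_\bullet(\alpha^{-1})\circ\phi_P\circ\R(\alpha)\circ\R(\alpha^{-1}) = \phi_P,
\]
using naturality of $\phi$ with respect to the restriction of $\alpha$ and the fact that $\R$ sends every morphism to the identity. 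Therefore each summand $\phi^g$ equals $\phi$, and $\Tr_H(\iota_H^*\phi) = \sum_{\rcos{g}\in\Gamma/H}\phi = |\Gamma:H|\cdot\phi$. Passing to cohomology, $\Tr\circ\Res$ is multiplication by $|\Gamma:H|$ on $H^*(\call, M)$.

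The only subtlety — and what I would flag as the main obstacle — is that $\Tr_H$ as defined at the cochain level takes input in $\hom_{\call_H}(\iota_H^*\R, \iota_H^* I_\bullet)$, whereas $\Res$ is defined directly on cohomology; one must be careful that the cochain-level model for $\Res$ really is restriction along $\iota_H$ and that the identification of $H^*(\call_H,\iota^*M)$ with $H^*(\iota_H^*\R,\iota_H^*I_\bullet)$ is compatible with the one used in the proof of the preceding proposition identifying $\Tr_H$ with the transfer of Definition \ref{transfer-defi}. Both facts are immediate from Corollary \ref{iota*preserves-resolutions} and the preceding proposition, so the verification is routine. One final bookkeeping remark: the computation above shows the statement holds on the nose at the cochain level, so there is no need to worry about signs, choices of resolution (independence being guaranteed by Corollary \ref{conj-res-indep}), or the section $\sigma$ (which does not even enter, since every summand collapses to $\phi$).
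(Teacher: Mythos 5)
Your proof is correct, and it takes a genuinely different route from the paper's. The paper argues at the level of coefficient systems: it factors $\Tr = \PreTr \circ \Sh_M^{-1}$, verifies that $\Sh_M^{-1}\circ\Res$ is induced by the adjunction unit $\delta_M\colon M\to R_\iota(\iota^*M)$, and then shows by direct inspection (using the explicit description of $\delta_M$ under the identification of Lemma~\ref{prop:RiMdescriptionR}(a)) that the composite natural transformation $\PreTr\circ\delta_M\colon M\to M$ is multiplication by $|\Gamma:H|$. You instead work entirely at the cochain level, relying on Definition~\ref{TrH} and the proposition immediately following it, which identifies the cochain-level formula $\Tr_H(\psi)=\sum_{\rcos{g}\in\Gamma/H}\psi^g$ with the abstract transfer of Definition~\ref{transfer-defi}. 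Your key observation --- that $(\iota_H^*\phi)^\alpha=\phi$ for \emph{every} $\alpha\in\aut_\call(S)$ when $\phi$ is a natural transformation on all of $\call$ --- is precisely Lemma~\ref{lem:sigma^aNatural}(b) applied with $K=\Gamma$ (so that $\call_K=\call$), and the paper itself uses this fact in the same spirit when proving Frobenius reciprocity. The two proofs buy the same thing, but the flavours differ: the paper's version stays closer to the Shapiro-theoretic scaffolding and makes the unit/counit bookkeeping explicit, whereas yours is more elementary and has the pleasant feature that the section $\sigma$ (which formally enters the paper's description of $\delta_M$ before cancelling) never appears at all, since every summand collapses to $\phi$ outright.
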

\begin{proof} Consider the diagram
\[
\xymatrix{
  {H^*(\call, M)} \ar[0,3]^{\Res} \ar[2,3]^{(\delta_M)_*} &&& {H^*(\call_H, \iota^*M)}
  \ar[2,0]^{\Sh_M^{-1}}_{\cong}  \ar[0,3]^{Tr_H}       & &
  & {H^*(\call, M)} \\ \\
                                                                           & &
  & {H^*(\call, R_\iota(\iota^*M))} \ar[-2,3]^{\PreTr},}
\]
where the right triangle commutes by definition of the transfer, and
where the morphism $\delta_M \colon M \to R_\iota(\iota^*M)$ is the unit of the $(\iota^*,R_\iota)$-adjunction evaluated at the object $M$. Notice that under the identification of Lemma \ref{prop:RiMdescriptionR}(a), $\delta_M(x)= \{M(\sigma_g)^{-1}(x)\}_{\rcos{g}\in\Gamma/H}$.

The  left triangle is commutative by construction of the Shapiro
isomorphism (see Subsection \ref{sub:KanShapiro}). To see this, notice that $Sh_M^{-1}$ is induced by the adjunction isomorphism
\[\rho\colon \hom_{\call_H\mod}(\R_{\call_H}, \iota^*I_\bullet) =  \hom_{\call_H\mod}(\iota^*\R_{\call}, \iota^*I_\bullet) \to \hom_{\call\mod}(\R_\call, R_\iota(\iota^*I_\bullet)),\]
where $I_\bullet$ is an injective resolution of $M$ in $\call\mod$. The adjunction, in turn, applied to a natural transformation $\psi\colon\iota^*\R_\call\to\iota^*I_\bullet$ is the composite
\[\R_\call\xrightarrow{\delta_{\R_\call}} R_\iota(\iota^* \R_\call)\xrightarrow{R_\iota\psi} R_\iota(\iota^* I_\bullet).\] By naturality of $\delta$, for any natural transformation $\varphi\colon \R_\call\to I_\bullet$, \[\delta_{I_\bullet}\circ\varphi = R_\iota(\iota^*\varphi)\circ\delta_{\R_\call}.\]
This proves commutativity of the left triangle, and therefore it suffices to show that the composite $\PreTr \circ
(\delta_M)_*$ is given by multiplication by $|\Gamma:H|$. This map
is induced by a natural transformation of coefficients
\[ M \xrightarrow{\delta_M} R_\iota(\iota^*M) \xrightarrow{\PreTr} M,\]
which is given element-wise by
\[
   x \mapsto \{M(\sigma_g)^{-1}(x)\}_{\rcos{g} \in \Gamma/H} \mapsto \sum_{\rcos{g} \in \Gamma/H}
   M(\sigma_g)(M(\sigma_g^{-1})(x)) = |\Gamma:H| \cdot x
\]
for $P \in \call$ and $x \in M(P).$
\end{proof}

\subsection{Double Coset Formula}
We now show that the transfer map constructed here satisfies a \emph{double coset formula} which is essentially identical to that which holds for ordinary group cohomology.

\begin{proposition}\label{double-cosets}
Let $\SFL$ be a $p$-local finite group, and let $H$ and  $K$ be subgroups of  $\Gamma = \Gamma_{p'}(\calf)$. Then the composite
\[ H^*(\call_K, \iota_K^*M) \xrightarrow{Tr_K}  H^*(\call, M)\xrightarrow{\Res_H} H^*(\call_H, \iota_H^*M) \]
is given by
\[\Res_H \circ \Tr_K =   \sum_{x \in H \backslash\Gamma/K} \Tr_{H\cap xKx^{-1}}^H \circ c_x^* \circ \Res_{{x^{-1}Hx\cap K}}^{K} \, .\]
\end{proposition}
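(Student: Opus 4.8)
The plan is to prove the double coset formula at the cochain level, using the explicit description of $\Tr_H$ and $c_x^*$ as sums of conjugations (Definition \ref{TrH} and Remark \ref{consequences of 4.4}), and then pass to cohomology. Fix an injective resolution $M \to I_\bullet$ in $\call\mod$; by Corollary \ref{iota*preserves-resolutions} its restriction to any $\call_L$ (for $L \le \Gamma$) is again an injective resolution, so all the cohomology groups in sight are computed by the corresponding $\hom$-complexes of cochains. First I would express the left-hand side on cochains: for $\phi \in \hom_{\call_K\mod}(\iota_K^*\R, \iota_K^*I_\bullet)$, the composite $\Res_H \circ \Tr_K$ sends $\phi$ to the restriction to $\call_H$ of $\sum_{\rcos{g}\in\Gamma/K}\phi^g$. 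Since restriction along $\call_H \subseteq \call$ is just ``forget naturality outside $H$'', this is literally $\sum_{\rcos{g}\in\Gamma/K}\phi^g$, now viewed as an element of $\hom_{\call_H\mod}(\iota_H^*\R,\iota_H^*I_\bullet)$.

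The heart of the argument is then the combinatorics of the index set $\Gamma/K$ under the left $H$-action. I would partition $\Gamma/K$ into $H$-orbits, indexed by double cosets $HxK \in H\backslash\Gamma/K$; the orbit of $xK$ is $\{hxK : h \in H\}$, and its stabilizer in $H$ is $H \cap xKx^{-1}$, so the orbit is in bijection with $H/(H\cap xKx^{-1})$. Choosing representatives $x$ for the double cosets and, for each, representatives $h$ for $H/(H\cap xKx^{-1})$, every coset in $\Gamma/K$ is uniquely $hxK$. Thus
\[
\sum_{\rcos{g}\in\Gamma/K}\phi^g \;=\; \sum_{x\in H\backslash\Gamma/K}\ \sum_{\rcos{h}\in H/(H\cap xKx^{-1})}\phi^{hx}
\;=\; \sum_{x\in H\backslash\Gamma/K}\ \sum_{\rcos{h}\in H/(H\cap xKx^{-1})}\bigl(\phi^x\bigr)^h,
\]
where the last equality uses $(\phi^x)^h = \phi^{hx}$ (the composition law $(\psi^\alpha)^\beta = \psi^{\beta\circ\alpha}$ from the discussion after the conjugation definition, together with Remark \ref{consequences of 4.4}). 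Here I must be a little careful about which $\hom$-set each symbol lives in: by Lemma \ref{lem:sigma^aNatural}(c), for $x^{-1} \in N_\Gamma(?, K)$ — more precisely, writing $y = x^{-1}$, one has $y^{-1} = x$ and the relevant statement is that $c_x^*$ carries $\hom_{\call_K\mod}(\iota_K^*\R,\iota_K^*I_\bullet)$ into $\hom_{\call_{x^{-1}Hx\cap K}\mod}$-type objects — so I will match the indices so that $\Res^K_{x^{-1}Hx\cap K}\phi$ makes sense, then $c_x^*$ of it lands in $\hom_{\call_{H\cap xKx^{-1}}\mod}(\iota^*\R,\iota^*I_\bullet)$, and finally $\Tr^H_{H\cap xKx^{-1}}$ of \emph{that} is exactly $\sum_{\rcos{h}\in H/(H\cap xKx^{-1})}(\cdot)^h$. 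Matching these up term-by-term against the orbit decomposition gives
\[
\Res_H\circ\Tr_K(\phi) \;=\; \sum_{x\in H\backslash\Gamma/K}\Tr^H_{H\cap xKx^{-1}}\circ c_x^*\circ\Res^K_{x^{-1}Hx\cap K}(\phi)
\]
as cochains, hence on cohomology.

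The main obstacle — and the step deserving the most care — is the bookkeeping of subgroups and of which conjugation/restriction lives where: one must check that for a double coset representative $x$, conjugation by (any $\alpha \in \aut_\call(S)$ over) $x$ really does send a natural transformation of $\call_K$-modules to one of $\call_{H\cap xKx^{-1}}$-modules, which is precisely the content of Lemma \ref{lem:sigma^aNatural}(c) once one verifies the transporter condition $x^{-1} \in N_\Gamma(H\cap xKx^{-1},\, x^{-1}Hx\cap K)$ (immediate from $x(x^{-1}Hx\cap K)x^{-1} = H\cap xKx^{-1}$), and that the result is independent of the choice of double coset representative and of the lift $\alpha$, which follows from Remark \ref{consequences of 4.4} and Lemma \ref{lem:sigma^aNatural}(b). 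A secondary point is to confirm that $\Res_H$ as defined (via the $\iota^*$ on cochains and Corollary \ref{iota*preserves-resolutions}) is compatible with the cochain-level transfer $\Tr_K$ of Definition \ref{TrH} — but since both are defined by the same $\hom$-complex description and the restriction is the obvious forgetful map, this is routine. Everything else is the orbit-counting identity for the $H$-set $\Gamma/K$, which is elementary.
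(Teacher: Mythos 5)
Your argument is correct and takes essentially the same route as the paper's own proof: both work at the cochain level with the explicit sum-of-conjugations description of the transfer, decompose $\Gamma/K$ into $H$-orbits indexed by $H\backslash\Gamma/K$ with orbit $HxK$ parametrized by $H/(H\cap xKx^{-1})$ (the paper's $z_{ji}x_iK$ with $z_{ji}$ ranging over $H/W_i$), and match terms via $(\phi^x)^h = \phi^{hx}$ together with Lemma \ref{lem:sigma^aNatural}(c). The paper is terser, outsourcing the coset bookkeeping to \cite[Thm 6.2]{AM}, but the decomposition, the conjugation identities used, and the final ``restrictions can be ignored at the cochain level'' identification are identical to yours.
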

\begin{proof}
Using the cochain-level description of the transfer in Section
\ref{sub:CochainLevel}, one can easily adapt most textbook proofs of
the double coset formula for group cohomology to prove this
proposition. The argument presented here follows the lines of  \cite[Thm 6.2]{AM}, to which the reader is referred for full detail.

Let \mbox{$\Gamma = \coprod_i H x_i K$} be a double coset
decomposition of $\Gamma$ with respect to $K$ and $H$. For each $i$,
put
\[V_i \defeq x_i^{-1}Hx_i\cap K \quad \mathrm{and}\quad W_i \defeq H\cap x_iKx_i^{-1} ,\] and let \mbox{$H = \coprod_j
z_{j i}W_i$} be a left coset decomposition of $H$ with respect to
$W_i$. Then one can rewrite the right $K$-coset decomposition of $\Gamma$ as
\[\Gamma = \coprod_{i,j} z_{j i} x_i K. \]
Let \mbox{$M\to I_\bullet$} be an injective resolution of $M$ in $\call\mod$. Since $x_i^{-1} \in N_\Gamma(W_i,V_i)$ we have induced chain maps
\[c_{x_i}^* \colon \hom_{\call_{V_i}\mod}(\iota_{V_i}^*\R_\call,\iota_{V_i}^*I_\bullet) \longrightarrow \hom_{\call_{W_i}\mod}(\iota_{W_i}^*\R_\call,\iota_{W_i}^*I_\bullet),\]
as in Corollary \ref{conj-res-indep}.
Let \mbox{$\phi \in
\hom_{\call_K\mod}(\iota_K^*\R_\call,\iota_K^*I_\bullet )$}. Then
\begin{align*}
  \Res_H \circ \Tr_K (\phi) &= \Res_H \left( \sum_{\rcos{g} \in  \Gamma/K} \phi^{g}  \right)  \\
                              &=  \Res_H \left( \sum_i \left(\sum_j \phi^{(z_{j i} x_i)}\right) \right) ,
\end{align*}
and
\begin{align*}
     \sum_i \Tr_{W_i}^H \circ c_{x_i}^* \circ \Res_{V_i}^K (\phi)
  &= \sum_i \left(\sum_j \left(c_{x_i}^*(\Res_{V_i}^K(\phi))\right)^{z_{i j}} \right) \\
  &=  \sum_i \left(\sum_j \left(\Res_{V_i}^K(\phi)\right)^{(z_{j i} x_i)}\right) .
\end{align*}

As we can ignore restrictions when determining the effect of these natural transformations, this calculation shows that the double coset formula holds already at the chain-level. Since the maps involved are maps of chain complexes, the result follows.
\end{proof}

\subsection{A Stable Elements Theorem}
In standard group cohomology, the availability of a transfer and a double coset formula allows one to prove that the cohomology of a finite group with a $p$-local module of coefficients is given by the so called "stable elements" in the cohomology of a Sylow $p$-subgroup with the restricted module of coefficients. Using a ``transfer-like'' map, a similar theorem was proved in $p$-local finite group cohomology with $p$-local \emph{constant} coefficients in \cite[Section 5]{BLO2}), and generalized to any stably representable non-equivariant cohomology theory in \cite{KR:ClSpec} (see also \cite{CM}). We now show that the existence of a transfer and a double coset formula in our context implies a stable elements theorem in $p$-local finite group theory for arbitrary $p$-local modules of coefficients, and subgroups of index prime to $p$.

Fix a $p$-local finite group $\SFL$, and let $\Gamma = \Gamma_{p'}(\calf)$.

\begin{definition}
Let $H\le K\le\Gamma$,  and let $M\in\call\mod$ be a system of coefficients. An element
\mbox{$x \in H^*(\call_H,\iota_H^*(M))$} is
\emph{$K$-stable} if for every subgroup \mbox{$U \leq H$} and every $g \in K$ with $g^{-1} \in N_K(U,H)$,
\[ c_g^* \circ \Res_{gUg^{-1}}^H (x) = \Res_U^H(x) \in  H^{*}(\call_U,\iota_U^*(M)).\]
\end{definition}

\begin{lemma} Let $H\le K\le\Gamma$,  and let $M$ be a system of coefficients on $\call$.  The
$K$-stable elements in $H^{*}(\call_H,\iota_H^*(M))$
form a submodule.
Furthermore, if $A$ is a system of ring coefficients, then the
submodule of $K$-stable elements in
$H^{*}(\call_H,\iota_H^*(A))$ is a subring.
\end{lemma}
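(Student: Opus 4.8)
The plan is to reduce both statements to a direct verification using the fact that the $K$-stability condition is a family of equations between $R$-module homomorphisms, so that the set of $K$-stable elements is literally an intersection of kernels of $R$-linear maps, hence a submodule. More precisely, for each pair $(U,g)$ with $U\le H$, $g\in K$, $g^{-1}\in N_K(U,H)$, consider the $R$-linear map
\[
\Psi_{U,g} \colon H^{*}(\call_H,\iota_H^*(M)) \longrightarrow H^{*}(\call_U,\iota_U^*(M)), \qquad \Psi_{U,g} = c_g^*\circ\Res_{gUg^{-1}}^H - \Res_U^H .
\]
Here $c_g^*$ is the conjugation homomorphism of Corollary \ref{conj-res-indep} (well defined since $g^{-1}\in N_K(U,H)\subseteq N_\Gamma(gUg^{-1},H)$, using Remark \ref{consequences of 4.4} to see it depends only on $g$), and $\Res$ is the restriction map of the preceding sections; both are $R$-module homomorphisms, hence so is $\Psi_{U,g}$. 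By definition an element is $K$-stable exactly when it lies in $\ker\Psi_{U,g}$ for all admissible $(U,g)$, so the submodule of $K$-stable elements is $\bigcap_{(U,g)}\ker\Psi_{U,g}$, an intersection of $R$-submodules, hence an $R$-submodule. First I would state this intersection description; the only point requiring a line of care is that each $\Psi_{U,g}$ genuinely lands in a fixed target that does not depend on the representative $g$ within its coset, which is Remark \ref{consequences of 4.4}.

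For the ring statement, suppose $A\in\call\alg$, so that by Proposition \ref{cup properties} each $H^{*}(\call_L,\iota_L^*(A))$ is a graded-commutative ring with unit, for every $L\le\Gamma$. I would check that the maps $\Res_U^H$ and $c_g^*$ appearing in the stability condition are ring homomorphisms. For $\Res$ this is the naturality of cup product in the category variable, i.e.\ part (a) of the proposition following Proposition \ref{cup properties} applied to the functor $\iota\colon\call_U\to\call_H$ (noting that restriction of coefficients along $\iota^*$ together with the induced map on cohomology is exactly the $\Res$ of the earlier sections); in particular $\Res_U^H(1)=1$. For $c_g^*$, one uses the cochain-level description of Section \ref{sub:CochainLevel}: conjugation $\phi\mapsto\phi^\alpha$ is defined by pre- and post-composition with the isomorphisms $A(\alpha^{-1})$ and $A(\alpha)$ of rings, so it respects the cup product on cochains (using that $A(\alpha)$ is an algebra isomorphism, hence commutes with the multiplication transformation $\mu$), and therefore induces a ring homomorphism on cohomology; it also sends $1$ to $1$. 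Granting this, if $x,y$ are $K$-stable then for each admissible $(U,g)$,
\[
c_g^*\Res_{gUg^{-1}}^H(xy) = \bigl(c_g^*\Res_{gUg^{-1}}^H x\bigr)\bigl(c_g^*\Res_{gUg^{-1}}^H y\bigr) = \bigl(\Res_U^H x\bigr)\bigl(\Res_U^H y\bigr) = \Res_U^H(xy),
\]
so $xy$ is $K$-stable; and the unit $1\in H^{0}(\call_H,\iota_H^*(A))$ is $K$-stable since $c_g^*$ and $\Res$ preserve units. Combined with the submodule statement this shows the $K$-stable elements form a subring.

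The main obstacle — really the only non-formal point — is verifying that the conjugation maps $c_g^*$ and the restriction maps $\Res$ are multiplicative at the cochain level, i.e.\ compatible with the cross-product pairing of Section \ref{homological algebra}. For $\Res$ this is essentially the statement that $\iota^*$ commutes with the tensor product of complexes and sends the chosen projective resolution $P_\bullet\to\R_\call$ to a projective resolution of $\R_{\call_H}$, which it does by Corollary \ref{iota*preserves-resolutions}; one then matches up the two cross products through the comparison of resolutions. For $c_g^*$ one has to track that the conjugation of cochains, being built entirely from the functorial maps $A(\alpha^{\pm1})$, commutes with the projection and with $\sigma\otimes\sigma'\mapsto\sigma\times\sigma'$, and that $A(\alpha)$ intertwines the two copies of $\mu$; this is routine but is where all the bookkeeping lives. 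Everything else is formal: the submodule claim is the intersection-of-kernels observation, and the ring claim then follows immediately from multiplicativity and the preservation of units.
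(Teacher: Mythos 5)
Your proposal is correct and takes essentially the same approach as the paper: the paper's one-line proof is precisely that restriction and conjugation are module (respectively ring) homomorphisms, so the stability conditions cut out a submodule (respectively subring). You have simply spelled out the intersection-of-kernels formulation and the multiplicativity checks that the paper leaves implicit.
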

\begin{proof}
This follows at once from the definition of stable elements, and the fact that the maps induced by restriction and conjugation are module maps or, in the case of ring coefficients, ring maps.
\end{proof}

\begin{lemma}\label{lem:Kstable} Let $H\le K\le\Gamma$,  and let $M\in\call\mod$ be a system of coefficients. If \mbox{$x
\in H^{*}(\call_K,\iota_K^*(M))$} then $\Res_H^K(x)$
is $K$-stable.
\end{lemma}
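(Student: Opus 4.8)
The plan is to recognise this as the $p$-local version of the classical fact that a cohomology class restricted from a larger group is automatically stable, and to check the defining condition of $K$-stability directly on the cochain level, using the cochain-level descriptions of restriction and of conjugation developed in Section~4.

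First I would fix an injective resolution $M\to I_\bullet$ in $\call\mod$. By Corollary~\ref{iota*preserves-resolutions} its restriction along each inclusion $\call_L\subseteq\call$ with $L\le\Gamma$ is again an injective resolution of $\iota_L^*M$, so every cohomology group occurring in the definition of $K$-stability is computed from the single complex $I_\bullet$; in particular the conjugations $c_g^*$ are then genuine maps of cochain complexes (Corollary~\ref{conj-res-indep}), while each restriction map is induced by the literal restriction of a natural transformation along a subcategory inclusion $\call_L\subseteq\call_{L'}$. I would also record the trivial but useful fact that this cochain-level restriction is transitive: a cocycle on $\call_K$ restricts to the same cochain on a full subcategory $\call_U$ whether or not one first passes through an intermediate $\call_H$ with $U\le H\le K$.

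The heart of the matter is a single identity at the cochain level, which I would establish as follows. Let $x\in H^*(\call_K,\iota_K^*M)$ be represented by a cocycle $\phi\in\hom_{\call_K\mod}(\iota_K^*\R,\iota_K^*I_\bullet)$, let $U\le H$ and $g\in K$ be as in the definition, and choose $\alpha\in\aut_\call(S)$ with $\widehat{\Theta}(\alpha)=g$, so that $\alpha\in\aut_{\call_K}(S)$ since $g\in K$. Then $\Res_H^K(x)$ is represented by $\phi$ regarded as a cochain on $\call_H$, and hence the two classes $\Res_U^H\Res_H^K(x)$ and $\Res_{gUg^{-1}}^H\Res_H^K(x)$ are represented by the restrictions of the single cochain $\phi$ to $\call_U$ and to $\call_{gUg^{-1}}$ respectively. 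Writing $\psi$ for this latter restriction, applying $c_g^*$ (which at the cochain level sends $\psi$ to $\psi^\alpha$), and unwinding the definition $(\psi^\alpha)_P=I_\bullet(\alpha)\circ\psi_{\alpha^{-1}(P)}\circ\R(\alpha^{-1})$ for an object $P$ of $\call_U$, and using that restricting $\phi$ does not alter its values on objects that still lie in the subcategory, one obtains $(\psi^\alpha)_P=I_\bullet(\alpha)\circ\phi_{\alpha^{-1}(P)}\circ\R(\alpha^{-1})$, which is exactly the value at $P$ of the conjugate $\phi^\alpha$ of the full $\call_K$-cochain $\phi$. Since $\alpha\in\aut_{\call_K}(S)$, Lemma~\ref{lem:sigma^aNatural}(b) gives $\phi^\alpha=\phi$ over $\call_K$, so $(\psi^\alpha)_P=\phi_P=(\phi|_{\call_U})_P$. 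Thus $c_g^*\,\Res_{gUg^{-1}}^H\Res_H^K(x)$ and $\Res_U^H\Res_H^K(x)$ are represented by one and the same cochain on $\call_U$, whence they agree in $H^*(\call_U,\iota_U^*M)$; as $U$ and $g$ were arbitrary, $\Res_H^K(x)$ is $K$-stable.

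The hard part here is entirely bookkeeping rather than conceptual: one must verify that the hypothesis on $g$ (namely $g^{-1}\in N_K(U,H)$) places the conjugating element in the transporter set required by Lemma~\ref{lem:sigma^aNatural}(c) and Corollary~\ref{conj-res-indep}, so that $c_g^*$ really is defined between the cohomology groups in question, and one must track objects carefully enough to be sure that $\alpha^{-1}(P)$ lies in the subcategory on which $\psi$ is defined whenever $P$ is an object of $\call_U$ — which is what makes the phrase ``restriction does not alter values'' legitimate, and which follows from the relation $\theta(\pi(\alpha)(s))=\widehat{\Theta}(\alpha)\,\theta(s)\,\widehat{\Theta}(\alpha)^{-1}$ coming from linking-system axiom~(C). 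Once this index-chasing is in place, nothing further is needed.
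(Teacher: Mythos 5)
Your proof is correct and uses the same essential idea as the paper's one-line proof, namely that Lemma~\ref{lem:sigma^aNatural}(b) forces $\phi^\alpha=\phi$ whenever $\widehat\Theta(\alpha)\in K$; you have simply spelled out the cochain-level bookkeeping that the paper leaves implicit. One small remark: in the $\Gamma_{p'}(\calf)$ setting the categories $\call_L$ all share the same objects (by Theorem~\ref{Classification-summary}), and $\theta\colon S\to\Gamma_{p'}(\calf)$ is trivial, so the object-tracking and the axiom~(C) relation in your final paragraph are vacuous here; the only genuine thing to check is the transporter condition of Lemma~\ref{lem:sigma^aNatural}(c), which your argument handles.
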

\begin{proof}
This follows from part (b) of Lemma \ref{lem:sigma^aNatural}.
\end{proof}
We are now ready to state the Stable Elements Theorem for $p$-local finite groups.
\begin{theorem}\label{Stable-Elts} Let $\Gamma=\Gamma_{p'}(\calf)$, let $H\le K\le\Gamma$, and let $M\in\call\mod$ be a $p$-local system of
coefficients. Then the restriction homomorphism
\[\Res_H^K \colon H^{*}(\call_K,\iota_K^*(M)) \longrightarrow H^{*}(\call_H,\iota_H^*(M)) \]
is a split injection whose image is the submodule of $K$-stable
elements.
\end{theorem}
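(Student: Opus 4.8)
The plan is to follow the classical argument for the stable elements theorem in group cohomology (as in \cite[Ch.~II]{AM}), using as our main tools the transfer map $\Tr_H^K$, the restriction $\Res_H^K$, the normalization property (Proposition \ref{prop:Norm}, applied with $\call_K$ in place of $\call$), and the double coset formula (Proposition \ref{double-cosets}, again relativized to the inclusion $\call_H\subseteq\call_K$). First I would reduce to the case $K = \Gamma$ for notational convenience; the general case follows by replacing $\SFL$ throughout with the $p$-local finite group $(S_K,\calf_K,\call_K)$ and noting that $\Gamma_{p'}(\calf_K)$ receives $K$ via the correspondence of Theorem \ref{Classification-summary}, so all the machinery of Sections 3 and 4 applies verbatim inside $\call_K$. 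So assume $K=\Gamma$ and write $n = |\Gamma:H|$, which is prime to $p$, hence invertible in any $p$-local module $M$.

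The argument then has three steps. \textbf{Split injectivity:} By Proposition \ref{prop:Norm} the composite $\Tr_H\circ\Res_H$ is multiplication by $n$ on $H^*(\call,M)$; since $M$ is $p$-local and $n$ is prime to $p$, this is an isomorphism, so $\frac1n\Tr_H$ is a left inverse to $\Res_H$ and the latter is split injective. \textbf{Image lands in the $\Gamma$-stable elements:} This is exactly Lemma \ref{lem:Kstable} (with $K=\Gamma$), which in turn rests on Lemma \ref{lem:sigma^aNatural}(b). \textbf{Every $\Gamma$-stable element is in the image:} This is the substantive direction. Let $x\in H^*(\call_H,\iota_H^*M)$ be $\Gamma$-stable. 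I would show that $\Res_H\circ\bigl(\tfrac1n\Tr_H\bigr)(x) = x$, which combined with split injectivity of $\Res_H$ forces $x$ to lie in the image of $\Res_H^{\,}$ composed with... more precisely, it shows $x = \Res_H(y)$ for $y = \tfrac1n\Tr_H(x)\in H^*(\call,M)$. To compute $\Res_H\circ\Tr_H(x)$, apply the double coset formula (Proposition \ref{double-cosets}) with both subgroups equal to $H$:
\[
\Res_H\circ\Tr_H(x) = \sum_{g\in H\backslash\Gamma/H} \Tr_{H\cap gHg^{-1}}^H\circ c_g^*\circ\Res_{g^{-1}Hg\cap H}^H(x).
\]
Now invoke $\Gamma$-stability: for each double coset representative $g$, setting $U = g^{-1}Hg\cap H\le H$ we have $g^{-1}\in N_\Gamma(U,H)$, so $c_g^*\circ\Res_{g^{-1}Hg\cap H}^H(x) = \Res_{gUg^{-1}}^H(x) = \Res_{H\cap gHg^{-1}}^H(x)$. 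Hence the $g$-th summand becomes $\Tr_{H\cap gHg^{-1}}^H\circ\Res_{H\cap gHg^{-1}}^H(x)$, which by Proposition \ref{prop:Norm} (relativized to $\call_H$) equals $|H : H\cap gHg^{-1}|\cdot x$. Summing over double cosets and using $\sum_g |H:H\cap gHg^{-1}| = |\Gamma:H| = n$ (the standard index count for a double coset decomposition), we get $\Res_H\circ\Tr_H(x) = n\cdot x$, so $\Res_H\bigl(\tfrac1n\Tr_H(x)\bigr) = x$, as required.

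The main obstacle I anticipate is purely bookkeeping rather than conceptual: making sure that all the ingredients—normalization, the double coset formula, and the relativized transfer $\Tr_H^K$—are legitimately available for the inclusion $\call_H\subseteq\call_K$ and not merely for $\call_H\subseteq\call$. This is exactly the content of the opening remark in Subsection \ref{sub:HKTransfer}: one takes $\call_K$ in place of $\call$ throughout the constructions of Section 4, and one must check that $(S_K,\calf_K,\call_K)$ is itself a $p$-local finite group (Theorem \ref{Classification-summary}(i)) whose fundamental group receives $H$ appropriately. Once that relativization is in hand, the proof is a formal consequence of Propositions \ref{prop:Norm} and \ref{double-cosets} exactly as in the classical case, and I would keep the written proof short, citing \cite{AM} for the parts of the double-coset manipulation that are identical to the group-theoretic argument.
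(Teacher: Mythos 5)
Your proposal is correct and takes essentially the same approach as the paper: split injectivity via normalization, image contained in the $K$-stable elements via Lemma \ref{lem:Kstable}, and surjectivity onto the stable elements by applying the double coset formula to $\Res_H^K \circ \Tr_H^K$ and using stability together with normalization and the index count $\sum_x |H : H\cap xHx^{-1}| = |K:H|$. The only presentational difference is that you first reduce to $K = \Gamma$ whereas the paper applies the relativized transfer and double coset formula directly to the inclusion $\call_H \subseteq \call_K$, relying on Subsection \ref{sub:HKTransfer}; these are the same argument.
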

\begin{proof}
By Proposition \ref{prop:Norm}, the composite \mbox{$\Tr_H^K \circ \Res_H^K$} acts on $H^{*}(\call_K,\iota_K^*(M))$ as multiplication by the index $|K\colon H|$. Since this index is prime to $p$, the map \mbox{$t \defeq |K \colon H|^{-1} \cdot\Tr_H^K$} is a left inverse for $\Res_H^K$, proving split injectivity. By Lemma \ref{lem:Kstable} the image of $\Res_H^K$ lies in the submodule of stable elements. Conversely, if \mbox{$a \in H^{*}(\call_H,\iota_H^*(M))$} is $K$-stable, then the double coset formula gives us
\begin{align*}
  \Res_H^K \circ \Tr_H^K (a)
  &=  \sum_{x \in H \backslash K/H} \Tr_{H \cap xHx^{-1}}^H \circ c_x^* \circ \Res_{x^{-1}Hx \cap H}^H (a) \\
  &=  \sum_{x \in H \backslash K/H} \Tr_{H \cap xHx^{-1}}^H \circ \Res_{H \cap xHx^{-1}}^H (a) \\
  &= \sum_{x \in H \backslash K/H} |H \colon H \cap xHx^{-1}| \, a \\
  &= |K \colon H| \, a,
\end{align*}
where the equality
\[ \sum_{x \in H \backslash K/H} |H \colon H \cap x^{-1}Hx| = |K \colon H| \]
is obtained as in the first step of the proof of the double coset
formula. It follows that
\[ x = \Res_H^K \left( \Tr_H^K (|K \colon H|^{-1} \, x)\right) \]
is in the image of $\Res_H^K$.
\end{proof}

\subsection{Frobenius Reciprocity}
Here we show that the transfer and restriction maps for $p$-local finite groups satisfy the standard Frobenius reciprocity formula.

\begin{proposition} \label{prop:Frob} Let $\SFL$ be a $p$-local finite group, and let $\Gamma$ be $\Gamma_p(\calf)$ or $\Gamma_{p'}(\calf)$. Let $H \leq K\le \Gamma$,
and let $A$ be a system of ring coefficients on $\call$. For $x \in H^*(\call_K,\iota_K^* A)$
and $ y \in H^*(\call_H, \iota_H^* A)$ we have
\[ \Tr_H^K(\Res_H^K(x) \, y) = x \, \Tr_H^K(y).  \]
\end{proposition}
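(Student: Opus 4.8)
The strategy is to reduce Frobenius reciprocity to the cochain level, exactly as with the double coset formula, and to exploit the explicit formula $\Tr_H^K(\phi) = \sum_{\rcos{x}\in K/H}\phi^x$ from Definition \ref{TrH} together with the module structure induced by the multiplication $\mu\colon A\otimes A\to A$. First I would fix an injective resolution $A\to I_\bullet$ in $\call\mod$, which by Corollary \ref{iota*preserves-resolutions} restricts to injective resolutions of $\iota_K^*A$ and $\iota_H^*A$. Represent $x$ by a cocycle $\xi\in\hom_{\call_K\mod}(\iota_K^*\R,\iota_K^*I_\bullet)$ and $y$ by a cocycle $\upsilon\in\hom_{\call_H\mod}(\iota_H^*\R,\iota_H^*I_\bullet)$. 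The cup product at the cochain level is computed via the cross product into a tensor-square resolution followed by $\mu_*$, as in Subsection on cup products; the key point is that all of these operations are given objectwise by honest $\R$-module maps, so the identity we must prove becomes an identity of natural transformations of coefficient systems, and it suffices to check it on each object $P\in\call$ and each element.

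**Key steps.** Second, I would unwind both sides using the conjugation formula $\phi^\alpha_P = M(\alpha)\circ\phi_{\alpha^{-1}(P)}\circ N(\alpha^{-1})$ from the definition of conjugation, applied with $M = N = A$ (or rather the relevant resolution terms), after fixing a section $\sigma\colon K/H\to\aut_\call(S)$. On the right-hand side, $\Tr_H^K(y)$ is the cochain $\sum_g \upsilon^{\sigma_g}$, and multiplying by $x$ (via $\Res$-ing $\xi$ to $\call_H$ is \emph{not} needed on this side) gives, objectwise over $P$, a sum over $\rcos{g}\in K/H$ of terms of the shape $\mu\big(\Res(\xi)_P\otimes \upsilon^{\sigma_g}_P\big)$ — wait, more precisely $x\cdot\Tr_H^K(y)$ is the cup product in $H^*(\call_K,\cdot)$, so it equals $\mu_*$ applied to $\xi\times\Tr_H^K(\upsilon)$. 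On the left-hand side, $\Res_H^K(x)\cdot y$ is the product in $H^*(\call_H,\cdot)$, namely $\mu_*(\iota^*\xi\times\upsilon)$, and then $\Tr_H^K$ of that is $\sum_g \big(\mu_*(\iota^*\xi\times\upsilon)\big)^{\sigma_g}$. Third — and this is the crux — I would show that for each $g$ and each object $P$, conjugating the product cochain by $\sigma_g$ distributes: $\big(\mu(\iota^*\xi\otimes\upsilon)\big)^{\sigma_g}_P = \mu\big(\xi^{\sigma_g}_P\otimes \upsilon^{\sigma_g}_P\big) = \mu\big(\xi_P\otimes\upsilon^{\sigma_g}_P\big)$, where the last equality uses part (b) of Lemma \ref{lem:sigma^aNatural}: since $\xi$ is already natural on all of $\call_K$ and $\sigma_g\in\aut_\call(S)$ has $\widehat\Theta(\sigma_g)\in K$, we get $\xi^{\sigma_g}=\iota^*\xi$ after restriction to $\call_{\sigma_g^{-1}H\sigma_g}$, i.e. conjugating the $\call_K$-cocycle by an element of $K$ does nothing. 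Summing over $g$ then yields $\sum_g\mu(\xi_P\otimes\upsilon^{\sigma_g}_P) = \mu\big(\xi_P\otimes\sum_g\upsilon^{\sigma_g}_P\big) = \mu\big(\xi_P\otimes\Tr_H^K(\upsilon)_P\big)$ by bilinearity of $\mu$, which is exactly the cochain representing $x\cdot\Tr_H^K(y)$. Passing to cohomology completes the proof.

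**Main obstacle.** The principal subtlety is bookkeeping the conjugation action through the cross-product construction: one must check that conjugation by $\alpha\in\aut_\call(S)$ commutes with the tensor-diagonal $\mathrm{proj}\colon (P\otimes P')_{k+l}\to P_k\otimes P'_l$ used to define $\sigma\times\sigma'$, and with $\mu_*$; this is where graded signs $(-1)^{kl}$ could in principle intervene, but since conjugation is induced by a genuine morphism $\alpha$ in $\call$ and the cross product is natural in the category variable (Proposition on naturality of cup products, part (a)), it commutes strictly with all structure maps and introduces no signs. A second point to handle with care is that on the left-hand side $\xi$ must first be restricted along $\call_H\subseteq\call_K$ before multiplying by $\upsilon$, whereas on the right it is used unrestricted; the observation that $\Res$ is just $\iota^*$ and that, as the double coset proof already notes, "restrictions can be ignored when determining the effect of these natural transformations on objects", reconciles the two. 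Once these two points are dispatched the identity is a one-line consequence of bilinearity of $\mu$ and Lemma \ref{lem:sigma^aNatural}(b), and I would reference \cite[Thm 6.4]{AM} or the analogous classical argument for the remaining routine verifications.
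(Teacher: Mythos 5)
Your proposal is correct and follows essentially the same route as the paper: represent both classes by cochains, expand the cochain-level transfer as $\sum_{\rcos{g}\in K/H}(-)^g$, observe that conjugation distributes over the cup product, invoke Lemma~\ref{lem:sigma^aNatural}(b) to kill the conjugation on the factor already natural over $\call_K$, and finally pull the sum into the multiplication by bilinearity. The paper works with a projective resolution where you use an injective one, but this is immaterial; you are somewhat more explicit than the paper about the cross-product/$\mu_*$ bookkeeping, which is fine but not required.
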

\begin{proof}
We prove this at the cochain level. Let $P_\bullet \to R$ be a projective resolution
of the constant functor on $\call$, and let $\phi \in
\hom_{\call_K\mod}(\iota_K^*P_\bullet,\iota_K^*A ),$ (representing $x$) and \mbox{$\psi \in
\hom_{\call_H\mod}(\iota_H^*P_\bullet,\iota_H^*A )$} (representing $y$). Then
\begin{align*}
  \Tr_H^K ( \Res_H^K(\phi) \, \psi )
      &= \sum_{\rcos{g} \in K /H} (\Res_H^K(\phi) \, \psi)^g 
      = \sum_{\rcos{g} \in K/H} \Res_H^K(\phi)^g \, \psi^g \\
      &= \sum_{\rcos{g} \in K/H} \Res_H^K(\phi) \, \psi^g 
      = \phi \sum_{ \rcos{g} \in K/H} \, \psi^g \\
      &= \phi \, \Tr_H^K(\psi),
\end{align*}
where we have used the equality $\Res_H^K(\phi)^g = \Res_H^K(\phi^g) = \Res_H^K(\phi)$ for $g \in K$ obtained in Lemma \ref{lem:sigma^aNatural} (b).
The result follows by passing to cohomology.
\end{proof}


\begin{thebibliography}{BLO2}
\bibitem[AM]{AM} A. Adem and R.J. Milgram \emph{Cohomology of Finite
Groups}, Grundlehren der Mathematischen Wissenschaften, 309.
Springer-Verlag, Berlin, 1994.

\bibitem[5A1]{BCGLO1} C. Broto, N. Castellana, J. Grodal, R. Levi and B. Oliver,
Subgroup families controlling $p$-local finite groups, \emph{Proc.
London Math. Soc.} (3)  {\bf 91}  (2005),  no. 2, 325--354.

\bibitem[5A2]{BCGLO2} C. Broto, N. Castellana, J. Grodal, R. Levi and B. Oliver,
Extensions of $p$-local finite groups, \emph{Trans. Amer. Math.
Soc.}  {\bf 359}  (2007),  no. 8, 3791--3858.

\bibitem[BLO1]{BLO1} C. Broto, R. Levi and B. Oliver, Homotopy equivalences
of $p$-completed classifying spaces of finite groups,  \emph{Invent.
Math.} {\bf 151}  (2003),  no. 3, 611--664.

\bibitem[BLO2]{BLO2} C. Broto, R. Levi and B. Oliver, The homotopy theory of fusion
 systems,  \emph{J. Amer. Math. Soc.} {\bf 16} (2003), no. 4,
 779-856.

\bibitem[CE]{CE} H. Cartan and S. Eilenberg, \emph{Homological
algebra}, Princeton University Press, Princeton, N. J., 1956.

\bibitem[CM]{CM} N. Castellana and L. Morales, \emph{Vector bundles over classifying spaces of $p$-local finite groups}, preprint.

\bibitem[MacL]{MacL} S. MacLane, \emph{Categories for the working mathematicians}, Graduate texts in Mathematics, Vol. \textbf{5}, Springer-Verlag, New York--Berlin, 1971.

\bibitem[Pu]{Puig2}
L.~Puig, Frobenius Categories,  \emph{J. Algebra} {\bf 303} (2006), 309--357


\bibitem[Qu]{Qu} D. Quillen, Higher algebraic $K$-theory. I., \emph{Algebraic $K$-theory, I: Higher $K$-theories (Proc. Conf., Battelle Memorial Inst., Seattle, Wash., 1972)},  pp. 85--147. Lecture Notes in Math., Vol. 341, Springer, Berlin 1973.

\bibitem[Ra]{KR:ClSpec} K. Ragnarsson, Classifying spectra for saturated fusion systems,
\emph{Algebr. Geom. Topol.} {\bf 6} (2006), 195--252.


\end{thebibliography}
\end{document}